\theoremstyle{plain} 
\newtheorem{theorem}{\indent\bf Theorem}[section] %
\newtheorem{lemma}[theorem]{\indent\bf Lemma}
\newtheorem{proposition}[theorem]{\indent\bf Proposition}
\theoremstyle{definition} %
\newtheorem{definition}[theorem]{\indent\bf Definition}
\newtheorem{remark}[theorem]{\indent\bf Remark}
\newcommand{\N}{\mathbb{N} }
\newcommand{\R}{\mathbb{R} }
\newcommand{\LA}{\left \langle }
\newcommand{\RA}{\right  \rangle }
\newcommand{\LB}{\left \lbrack }
\newcommand{\RB}{\right  \rbrack }
\newcommand{\LC}{\left ( }
\newcommand{\RC}{\right ) }
\newcommand{\LD}{\left \{ }
\newcommand{\RD}{\right \} }
\newcommand{\DS}{\displaystyle }
\def\address#1#2{\begingroup
\noindent\parbox[t]{7.8cm}{%
\small{\scshape\ignorespaces#1}\par\vskip1ex
\noindent\small{\itshape E-mail address}%
\/: #2\par\vskip4ex}\hfill%
\endgroup}%
\title{\uppercase{Periodic Problem for Doubly Nonlinear Evolution Equation}} %
\author{
\textsc{Masahiro Koike, Mitsuharu \^{O}tani, and Shun Uchida} %
\date{} %
}
\begin{document}

\maketitle

\footnote{ 
2010 \textit{Mathematics Subject Classification}.
Primary 47J35; Secondary 35B10, 35K55, 35K92.
}
\footnote{ 
\textit{Key words and phrases}. Doubly nonlinear evolution equation,
time-periodic problem, subdifferential, $p$-Laplacian, doubly nonlinear parabolic equation.
}
\footnote{ 
$^{*}$ M. \^{O}tani was supported by the Grant-in-Aid for Scientific Research (C) {\#}18K03382, JSPS Japan.

$^{**}$ S. Uchida was supported by the Grant-in-Aid for Scientific Research (C) {\#}18K03382, JSPS Japan and 
Fund for the Promotion of Joint International Research (Fostering Joint International Research (B))
{\#}18KK0073, JSPS Japan.
}

\begin{abstract}
We are concerned with  the time-periodic problem of some doubly nonlinear equations
governed by differentials of two convex functionals over 
uniformly convex Banach spaces.
Akagi--Stefanelli (2011) \cite{AS_Cau} considered Cauchy problem of the same equation
via the so-called WED functional approach.
Main purpose of this paper is to show the existence of the time-periodic solution 
under the same growth conditions on functionals and differentials as those imposed in \cite{AS_Cau}.
 Because of the difference of nature between Cauchy problem and the periodic problem, 
we can not apply the WED functional approach directly, so we here 
adopt standard compactness methods with suitable approximation procedures.
\end{abstract}

\section{Introduction} 
Let $V$ be a uniformly convex real  Banach space
and $V ^ *$ be its uniformly convex dual.
In this paper,
we are concerned with the following
time-periodic problem of doubly nonlinear  evolution equation: 
\begin{equation*}
  \text{\rm (AP)} 
    \begin{cases}
                ~ d \psi ( u '  (t) ) + \partial \phi (u (t) ) \ni f (t),
			       ~~ & t \in (0,T ) ~~\text{ in } V ^{*} , 
\\
                ~ u(0) = u(T), 
                   ~~ & 
    \end{cases}
\end{equation*}
where $ d \psi $ and $\partial \phi $ are G\^{a}teaux differential and subdifferential 
  of convex functionals $\psi$ and $ \phi$ which are mapping from $V$  into
     $ ( - \infty , + \infty ] $.
       Here and henceforth, $u'$ denotes the time derivative of $u$ and 
         $f$ is a given external force belonging to $ L^{p'}(0,T;V^{*})$,
           where $p ' := p / ( p-1 )$  and $p \in (1, \infty ) $ 
             (precise definitions and assumptions will be given in the next section).
    A typical example which can be reduced to (AP) is given by the following 
       doubly nonlinear parabolic equation:
\begin{equation}
    \alpha (u'(x,t)) -\Delta_m u(x,t )  =  f(x ,t ) ,
\label{Int01} 
\end{equation}
where $\alpha  : \R \to \R $ is a non-decreasing function
and $\DS \Delta_m u :=  \nabla \cdot \LC |\nabla u |^{m-2} \nabla u  \RC$ (so-called $m$-Laplacian).

 Cauchy problem for (AP) in Hilbert space   
  has been studied by  Barbu \cite{Bar0}, where the elliptic regularization technique by adding
   the term $- \varepsilon ( d_V \psi (u' )) '$ is employed 
     and by Arai \cite{Arai}, Senba \cite{Senba}, Colli--Visintin \cite{CV} via 
       the standard relaxation procedure  
         with the additional term $ \varepsilon u' $. 
 Stefanelli \cite{Stef} discussed
  a variational characterization of solution to gradient system 
    relying on the  Br\'ezis--Ekland principle.
 Investigation of global solvability in Banach spaces began with Colli \cite{C},
   where the time discretization and the polygonal chain approximation are exploited.
 In Akagi--Stefanelli \cite{AS_Cau},
  they adopted another approach based on the fact that
   the solution to Cauchy problem for  (AP) with elliptic regularization
    can be characterized by the minimizer of some functional 
     with the weight $\exp (- t / \varepsilon )$,
      the so-called Weighted Energy-Dissipation (WED, for short) functional
 (see also 
    \cite{A-S0} \cite{A-S2} \cite{A-S3} \cite{CO} \cite{MO} \cite{MS1} \cite{MS2}
      and references therein).
 By this procedure, 
   they proved the existence of global solution to Cauchy problem
    assuming some growth  conditions on $\psi $ and $\phi$ (see Remark \ref{Compare} below).

 We here comment on the following variant type of doubly nonlinear equation,
   studied more vigorously than (AP):
\begin{equation}
 (A u (t)) ' + B u(t) \ni  f( t ) ,
\label{Int02} 
\end{equation}
     where $A$ and $B $ are maximal monotone operators.
 Cauchy problem of \eqref{Int02} is investigated in, e.g.,
  \cite{DS} \cite{G} \cite{Hokkanen1} \cite{KP}.
 Akagi--Stefanelli attempt  the WED functional approach to \eqref{Int02} in \cite{A-S1}.
  Moreover, papers
   \cite{AL} \cite{Bernis} \cite{FT} \cite{Ishige} \cite{Ivanov} \cite{Raviart} 
     \cite{Saa} \cite{Tsutsumi} \cite{Ves} are devoted to 
        the initial boundary value problem 
of specific parabolic PDEs  such as
$\partial _t u  - \Delta _m u ^p = f $  and  $\partial _t u ^ p - \Delta _m u  = f $.

Compared with Cauchy problem, there are a few results 
for the existence of time-periodic solution to doubly nonlinear equation.
Periodic problem for \eqref{Int02} is considered in \cite{AH2} \cite{Hokkanen3} \cite{KK1} \cite{KK2}
and concrete PDEs of the same type as \eqref{Int02}
in \cite{FMNP} \cite{FNP} \cite{SYW} \cite{WG} \cite{WY} \cite{WYK}. 
 However, 
to the best of our knowledge, the investigation into  other types of equations
 different from \eqref{Int02}  
can be found  only in Akagi--Stefanelli \cite{AS_Per}.
They consider the solvabilty and structural stability of periodic problem
of the following abstract equation:
\begin{equation}
 A ( u ' (t) )  + \partial \phi ( u(t) )  \ni  f( t ) ,
\label{Int03} 
\end{equation}
where $A$ is a  possibly  multi-valued maximal monotone operator.

 In \cite{AS_Per},
  they restrict their discussion to the Hilbert space framework 
   and impose the linear growth  condition on $A$.  
     These conditions seem to be more restrictive 
       than those imposed for Cauchy problem  in  \cite{AS_Cau} when $A = d \psi $.
 Main purpose of this paper is to show that we can discuss the periodic problem for (AP) 
  under almost the same growth conditions as those in \cite{AS_Cau}.  
 In the next section,
   we fix  several notations, present a few auxiliary tools, and 
     state  our main results more  precisely.
 Section 3 is devoted to our proofs. 
  In Section 4,
   we show that estimates established in the previous section
    are immediately applicable to the study for some structural stability of (AP).
 Finally,  we exemplify the applicability of our setting 
 	by dealing with doubly nonlinear parabolic equation \eqref{Int01}.

 Some parts of our arguments for estimates and convergence rely on \cite{AS_Cau}.
  However, our main strategy seems to be indispensable in order to cope with difficulties
   arising from the difference of nature between Cauchy problem and the time-periodic   
    problem,  for which one would perceive that 
     it is hard to find a suitable variational structure 
      to apply the WED functional approach by \cite{AS_Cau}.
  So here we depart from  the WED functional setting 
   and adopt the standard compactness method
     with a suitable approximation procedure 
      (see (AP)$_{\varepsilon }$ in Theorem \ref{Th1} given later).
 In establishing a priori estimates for solutions of approximate equations, the coercivity of 
   $\partial \phi $  to be assumed plays an essential role,  
     since $u (0) $ is an unknown value which may possibly depend 
       on approximation parameters  for the periodic problem.

  In proving our main results, we first deal with the easier case 
   where $\phi$ dominates $\psi$ in 
    Section 3.2--3.4 and the other case will be treated in Section 3.5.


\section{Main Results}

\subsection{Preliminary}

We first fix some notations and recall basic properties (see  \cite{Bar1}\cite{Bar2}\cite{BRE}), 
which will be used later.

Let $E$ be a real Banach space and $E^{\ast }$ be its dual.
The norms of $E$ and $E ^{\ast }$ are denoted by $|\cdot | _E  $ and  $|\cdot | _{E^{\ast} }  $,
respectively, and the duality pairing by $\LA  u ^{\ast } , u \RA _E $,
where $\LB u , u^{\ast } \RB \in E \times   E^{\ast } $.
We define the duality mapping $F_E $ of $E$ by
\begin{equation*}
 F_E (u) := \LD u^{\ast } \in  E^{\ast };~ \LA  u ^{\ast } , u \RA _E  = |u| ^2 _E = |u^{\ast}| ^2 _{ E ^{\ast}} \RD .
\end{equation*}
By virtue of the Hahn-Banach theorem,
$F_E (u) $ is non-empty for every $u \in E$.
If $E ^{\ast} $ is strictly convex,
then $F_E $ is  a single-valued demi-continuous mapping.
We also obtain the following (see Pr\"{u}{\ss} \cite{Pru}):
\begin{proposition}
  A Banach Space $E$ is uniformly convex if and only if
   for each $R > 0 $ there exists a non-decreasing function $m _R : [0 ,\infty ) \to [0 ,\infty ) $
which satisfies $m _R (0) = 0 $, $m _R (\rho ) > 0 $ for any $\rho > 0 $, and 
\begin{align*}
\LA  u ^{\ast}_1 - u ^{\ast}_2,   u _1 - u _2  \RA _E
& \geq m _R (| u_1- u_2| _E ) | u_1- u_2| _E,\\
\hspace{7mm}
&\text{for any } u_1 , u _2 \in B_R
\text{ and } u ^{\ast}_1 \in F_E (u_1),~ u ^{\ast}_2 \in F_E (u_2),
\end{align*}
where $B_R : = \LD u \in E ;~ |u| _E  \leq R \RD $.
\label{Pro2.1} 
\end{proposition}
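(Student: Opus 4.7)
The plan is to prove the two implications by relating the monotonicity modulus of the duality mapping to the modulus of convexity of $E$.

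For the ``only if'' direction, I would argue by contradiction. Suppose the inequality fails: there exist $R, \rho _0 > 0$ and sequences $u_n, v_n \in B_R$ with $|u_n - v_n|_E \geq \rho _0$ together with selections $u_n ^{\ast} \in F_E (u_n)$, $v_n ^{\ast} \in F_E (v_n)$ such that $\LA u_n ^{\ast} - v_n ^{\ast}, u_n - v_n \RA _E \to 0$. Expanding the pairing and using $\LA u_n ^{\ast}, v_n \RA _E \leq |u_n|_E |v_n|_E$ yields the elementary estimate
\begin{equation*}
\LA u_n ^{\ast} - v_n ^{\ast}, u_n - v_n \RA _E \geq \LC |u_n|_E - |v_n|_E \RC^2,
\end{equation*}
which forces $|u_n|_E - |v_n|_E \to 0$. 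Extracting a subsequence with $|u_n|_E, |v_n|_E \to r \geq 0$, the case $r = 0$ is excluded by $|u_n - v_n|_E \geq \rho _0$. For $r > 0$, the same expansion gives $\LA u_n ^{\ast}, v_n \RA _E \to r^2$ and $\LA v_n ^{\ast}, u_n \RA _E \to r^2$, hence $\LA u_n ^{\ast}, u_n + v_n \RA _E \to 2 r^2$; combined with $|u_n ^{\ast}|_{E ^{\ast}} = |u_n|_E \to r$, this implies $|u_n + v_n|_E \to 2r$. After rescaling by $r$, this contradicts the uniform convexity of $E$ since $|u_n - v_n|_E / r \geq \rho _0 / r > 0$.

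For the ``if'' direction, I would recover the modulus-of-convexity formulation quantitatively. Fix $u_1, u_2 \in E$ with $|u_1|_E = |u_2|_E = 1$ and $|u_1 - u_2|_E \geq \varepsilon$, and introduce the convex function
\begin{equation*}
\phi (s) := \frac{1}{2} |u_2 + s(u_1 - u_2)|_E ^2, \qquad s \in [0, 1],
\end{equation*}
whose (one-sided) derivative satisfies $\phi' (s) = \LA F_E (u_2 + s(u_1 - u_2)), u_1 - u_2 \RA _E$. Writing
\begin{equation*}
\phi (1) + \phi (0) - 2 \phi (1/2) = \int _0 ^{1/2} \LB \phi' (s + 1/2) - \phi' (s) \RB ds
\end{equation*}
and applying the hypothesis with $R = 1$ in each integrand (the two arguments of $F_E$ differ exactly by $(1/2)(u_1 - u_2)$, and the non-decreasing property of $m_1$ allows replacing $|u_1 - u_2|_E / 2$ by $\varepsilon / 2$), I obtain
\begin{equation*}
1 - \LZ \frac{u_1 + u_2}{2} \RZ _E ^2 \geq \frac{\varepsilon}{2}\, m_1 \LC \frac{\varepsilon}{2} \RC ,
\end{equation*}
which gives $|(u_1 + u_2)/2|_E \leq 1 - \delta (\varepsilon)$ with $\delta (\varepsilon) > 0$, i.e.\ the uniform convexity of $E$.

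The main obstacle is the multivalued nature of $F_E$ when $E ^{\ast}$ is not strictly convex: in that case $\phi$ is only one-sided differentiable and $\phi'$ must be interpreted as any selection from the subdifferential $\partial \phi$. However, such a selection inherits the monotonicity inequality from $F_E$ via the chain rule for convex functions, so the integral computation is unaffected up to a set of measure zero, and the condition $m_R (\rho) > 0$ for $\rho > 0$ passes directly to strict positivity of $\delta (\varepsilon)$.
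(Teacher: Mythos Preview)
The paper does not supply a proof of this proposition; it is stated with a reference to Pr\"{u}{\ss}. Your argument is correct and provides a self-contained proof of both implications.

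For the ``only if'' direction your contradiction argument is the standard one, and the key step---that $\LA u_n^{\ast}, v_n \RA_E \to r^2$ together with $|u_n^{\ast}|_{E^{\ast}} \to r$ forces $|u_n + v_n|_E \to 2r$---is exactly right. The final ``rescaling by $r$'' is slightly terse, since $u_n/r$ and $v_n/r$ do not have norm exactly $1$; the gap is closed either by passing to $u_n/|u_n|_E$, $v_n/|v_n|_E$ (noting that $u_n/|u_n|_E - u_n/r \to 0$ in $E$) or by invoking the equivalent form of uniform convexity in which $|x_n|_E, |y_n|_E \to 1$ suffices. This is a routine adjustment.

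For the ``if'' direction your integral identity
\begin{equation*}
\phi(1) + \phi(0) - 2\phi(1/2) = \int_0^{1/2} \LB \phi'(s+1/2) - \phi'(s) \RB ds
\end{equation*}
combined with the hypothesis at $R = 1$ is clean and correct; both arguments of $F_E$ lie on the segment joining $u_2$ to $u_1$, hence in $B_1$, which justifies that choice of $R$. Your treatment of the multivalued case is also adequate: the real convex function $\phi$ is differentiable a.e., and at points of differentiability every selection $\eta \in F_E(u_2 + s(u_1-u_2))$ yields the same value $\LA \eta, u_1 - u_2 \RA_E = \phi'(s)$ by the chain rule $\partial \phi(s) = \LD \LA \eta, u_1 - u_2 \RA_E : \eta \in F_E(u_2 + s(u_1-u_2)) \RD$, so the pointwise inequality passes to the integrand without difficulty.
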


  Let $A$ be a (possibly) multivalued operator from $E$ into $2^{E^{\ast}}$(the power set of $E^{\ast}$) 
   and we often identify $A$ with its graph $G(A)$, more precisely, we write 
    $[u, u ^{\ast}] \in A$ if $ u \in D(A) := \{ ~\! v \in E~\! ; ~\! A v \neq \varnothing ~\! \}$ 
      and $ u ^{\ast} \in A u$. Operator $A$ is said to be monotone if 
\begin{equation*}
       \LA  u ^{\ast} _1 - u ^{\ast} _2 , u_1 -u_2  \RA _E \geq 0 \quad \forall [u_i, u ^{\ast} _i] \in A \ (i=1,2)
\end{equation*}
 and
 monotone operator $A$ is said to be maximal monotone if there is no monotone operator 
  which contains $A$ properly.
If $E$ and $E^{\ast}$ are reflexive and strictly convex, 
maximality of $A$  is equivalent to $ R( F_E + A) = E^{\ast}$ 
(see also Lemma \ref{lemma:maximal} below).

Let $\phi : E \to ( -\infty , + \infty ] $ be a 
proper (i.e., $\phi \not \equiv + \infty $) 
lower semi-continuous (l.s.c., for short) and convex functional.
The set $D(\phi ) := \{ u \in E; ~\phi (u) < + \infty  \} $
is called the effective domain of $\phi $ and
 the subdifferential operator $\partial \phi $ from $E$ into $2^{E^{\ast } }$ 
  is  defined by 
\begin{equation}
\partial \phi :
u \mapsto 
\{
\eta \in E ^{\ast };
~
\phi (v ) \geq \phi (u) + \LA \eta , v - u  \RA _E
~\forall v \in D(\phi )
\} .
\label{SubD} 
\end{equation}
  Then $\partial \phi $ becomes
a maximal monotone operator from $E$  into $2 ^{E ^{\ast }}$.
Here we  recall the following  fundamental feature of closedness of maximal monotone 
 operator 
(see Lemma 1.2 in Br\'{e}zis--Crandall--Pazy \cite{BCP}).
\begin{proposition}
\label{Pro2.2} 
Let $E$ be a real Banach space and $A:E \to 2^{E^{\ast}}$ be a maximal monotone operator. 
Assume that the sequences $\{u _n \} _{n \in \N } \subset E$ and $\{v _n \} _{n \in \N } \subset E^{\ast }$
satisfy $u _n \rightharpoonup u $ weakly in $E$,  $v _n \rightharpoonup v $ weakly in $E^{\ast}$,
$\LB u_ n  , v_ n  \RB \in A$, and either 
\begin{equation*}
\limsup _{n,m \to \infty } \LA  v _n - v_m , u_n -u_m  \RA _E \leq 0
\end{equation*}
   or
\begin{equation*}
\limsup _{n\to \infty  } \LA  v _n , u_n   \RA _E \leq \LA  v , u    \RA _E .
\end{equation*}
   Then $\LB u , v \RB \in A $ and $ \LA   v _n , u_n   \RA _E \to  \LA  v , u    \RA _E$ as $n \to \infty $.
\end{proposition}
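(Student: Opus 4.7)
The statement is the classical Brézis--Crandall--Pazy closedness lemma, so my plan is to first dispose of the easier hypothesis (the second one, which I call (ii)), and then reduce the first hypothesis (i) to (ii).

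Under hypothesis (ii), the plan is to exploit the monotonicity of $A$ to test $[u_n,v_n]$ against an arbitrary element $[w,w^{*}]\in A$:
\begin{equation*}
\LA v_n-w^{*},\,u_n-w\RA_E\ge 0,
\end{equation*}
which, upon expansion, reads
\begin{equation*}
\LA v_n,u_n\RA_E\ge \LA v_n,w\RA_E+\LA w^{*},u_n\RA_E-\LA w^{*},w\RA_E.
\end{equation*}
Since $u_n\rightharpoonup u$ in $E$ and $v_n\rightharpoonup v$ in $E^{*}$, the two cross terms on the right pass cleanly to $\LA v,w\RA_E$ and $\LA w^{*},u\RA_E$, giving $\liminf_n\LA v_n,u_n\RA_E\ge\LA v,w\RA_E+\LA w^{*},u-w\RA_E$. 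Combining this with the standing hypothesis $\limsup_n\LA v_n,u_n\RA_E\le\LA v,u\RA_E$ yields
\begin{equation*}
\LA v-w^{*},\,u-w\RA_E\ge 0\quad\forall\,[w,w^{*}]\in A.
\end{equation*}
The maximality of $A$ then forces $[u,v]\in A$. Finally, specialising $[w,w^{*}]:=[u,v]$ in the lower bound above gives $\liminf_n\LA v_n,u_n\RA_E\ge\LA v,u\RA_E$, which together with the $\limsup$ inequality produces the asserted convergence $\LA v_n,u_n\RA_E\to\LA v,u\RA_E$.

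To handle hypothesis (i), the plan is to prove that (i) already implies (ii), after which the preceding argument applies verbatim. Let $\alpha:=\limsup_n\LA v_n,u_n\RA_E$ and pass to a subsequence $\{n_k\}$ along which $\LA v_{n_k},u_{n_k}\RA_E\to\alpha$. Applying (i) to this subsequence and expanding,
\begin{equation*}
\LA v_{n_k},u_{n_k}\RA_E+\LA v_{n_l},u_{n_l}\RA_E-\LA v_{n_k},u_{n_l}\RA_E-\LA v_{n_l},u_{n_k}\RA_E\le\varepsilon
\end{equation*}
for $k,l$ large. Fixing $l$ and letting $k\to\infty$ sends the two cross terms to $\LA v,u_{n_l}\RA_E$ and $\LA v_{n_l},u\RA_E$; a subsequent passage $l\to\infty$ sends each of these to $\LA v,u\RA_E$. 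This yields $2\alpha-2\LA v,u\RA_E\le\varepsilon$, so $\alpha\le\LA v,u\RA_E$, which is exactly (ii).

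The only delicate point is the two‑parameter limit in the reduction from (i) to (ii): one must be careful to isolate a subsequence that realises $\limsup\LA v_n,u_n\RA_E$ before invoking the double $\limsup$ hypothesis, since otherwise a direct diagonal passage mixes weak limits with the unknown quantity $\LA v_n,u_n\RA_E$ itself. Once the subsequence is chosen, however, the cross terms become genuine products of one strongly fixed factor against one weakly convergent factor and the argument closes.
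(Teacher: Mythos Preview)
The paper does not prove Proposition~\ref{Pro2.2}; it merely cites Lemma~1.2 of Br\'ezis--Crandall--Pazy~\cite{BCP} as the source, so there is no ``paper's own proof'' to compare against. Your argument is the standard one and is correct: the treatment of hypothesis~(ii) via monotonicity testing against an arbitrary $[w,w^{*}]\in A$ followed by maximality is exactly the textbook route, and your reduction of (i) to (ii) by isolating a subsequence realising $\alpha=\limsup_n\LA v_n,u_n\RA_E$ and then passing to the iterated limit is the right idea. One cosmetic point: you tacitly assume $\alpha<+\infty$ when you write $\LA v_{n_k},u_{n_k}\RA_E\to\alpha$; this is harmless, since the same expansion with $l$ fixed and $k\to\infty$ already shows $\LA v_{n_k},u_{n_k}\RA_E$ is bounded above (the cross terms converge and the remaining terms are fixed), but a one-line remark would make the argument airtight.
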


Let $Z$ be another Banach space which is densely embedded in $E$ and $D(\phi ) \subset Z$.
Then we can consider the restriction of $\phi $ onto $Z$, denoted by $\phi _Z$
and regard $\phi _Z $ as a proper l.s.c. convex functional on $Z$.
To distinguish  between the two subdifferentials over $E $ and $Z$,
we write
\begin{align*}
     & \partial_E \phi (u) := \partial \phi (u) 
         = \{ \eta \in E ^{\ast } ; ~ \phi(v) \geq \phi(u) + \LA \eta, v - u \RA_E
	       ~ \forall v \in D(\phi ) \}, 
\\[2mm]
     & \partial _Z \phi _Z (u) := \{ \eta \in Z ^{\ast } ; 
         ~ \phi_Z (v) \geq \phi_Z (u) + \LA \eta, v- u \RA_Z
	       ~\forall v \in D(\phi _Z ) \}. 
\end{align*}
In general, we have $\partial _E \phi \subset \partial _Z \phi _Z$.
  More precisely it is shown that (see Proposition 2.1 in Akagi--Stefanelli \cite{AS_Cau})
\begin{equation}
\label{sub} 
   \begin{split}
      & D(\partial_E \phi ) = \{~\! u \in D(\partial_Z \phi_Z) ~\! ; ~\! 
                                  \partial_Z \phi_Z (u) \cap E^{\ast} \neq \varnothing ~\! \}, 
\\[2mm] 
      & \partial_E \phi (u) = \partial_Z \phi_Z (u ) \cap E^{\ast} 
          ~~~\forall u \in D(\partial_E \phi). 
   \end{split}
\end{equation}

We next define another functional derivative.
Let $\psi $ be a functional defined over $E$.
Then $\psi $ is said to be G\^{a}teaux differentiable at $u \in E$
if there exists $\xi \in E ^{\ast } $ such that
\begin{equation*}
\lim _{h\to 0 } \frac{\psi (u + he ) - \psi (u)}{h} 
=
\LA \xi , e \RA _E \hspace{5mm} \forall e \in E.  
\end{equation*}
We call $\xi $  the G\^{a}teaux derivative of $\psi $ at $u$
and 
write $\xi = d_E \psi (u)$. 
When there exists $ d_E \psi (u)$ for every $u \in E $,
$\psi $ is said to be G\^{a}teaux differentiable over $E$.
It is well known that
if $\psi $ is proper l.s.c. convex and G\^{a}teaux differentiable at $u$,
then G\^{a}teaux derivative $d _ E \psi (u)$ coincides with 
the subdifferential $\partial _E \psi (u )$
(i.e., $\partial _E \psi (u) = \{ d _ E \psi (u) \}$ holds).

For proper l.s.c. convex functional $\phi $ on $E$, we define
\begin{equation*}
\phi _\lambda (u)
:=
\inf _{v \in E} \LD \frac{ |u -v | ^2 _E}{2 \lambda } + \phi (v) \RD
= \frac{ | u - J_{\lambda } u| ^2_E}{2 \lambda } + \phi ( J_{\lambda } u )
\hspace{5mm} \lambda >0 ,~u \in E,  
\end{equation*}
where $J _{\lambda } : E\to E$ stands for the resolvent of $\partial _E \phi $ 
 , i.e., $J_\lambda u$ is the unique solution of 
\begin{equation*}
   F_E( J_\lambda u - u) + \lambda \partial _E \phi( J_\lambda u ) \ni 0.
\end{equation*}
This functional $\phi _{\lambda }$, called  the Moreau-Yosida regularization of $\phi $,
is convex, continuous, and G\^{a}teaux differentiable over $E$.
We can show that the G\^{a}teaux differential $d _E \phi _{\lambda }$
coincides with the
Yosida approximation $ (\partial_E  \phi)_\lambda $ of the subdifferential $\partial_E  \phi $, 
which is defined by
\begin{equation*}
  (\partial _E  \phi)_\lambda u := - \lambda^{-1} F_E(J_\lambda u - u).
\end{equation*} 
Moreover, we have $D(\phi_{\lambda }) = E  $ and for any $u \in  D(\phi) $ 
\begin{equation}
d _E \phi _{\lambda } (u) \in \partial _E \phi (J_{\lambda } u),~~
 \phi (J _{\lambda } u ) \leq \phi _{\lambda } (u) \leq \phi (u),~~
\lim_{\lambda \to +0 } \phi _{\lambda } (u) = \phi (u).
\label{Yosida} 
\end{equation}

We here present some useful tools,  the chain rule and 
	a formula concerning the integration by parts.
\begin{proposition}
\label{Pro2.3} 
(see Lemma 4.1 of Colli \cite{C})
Let $E$ be a strictly convex reflexive Banach space 
and $\phi : E \to ( -\infty , + \infty ]$ be a proper 
lower semi-continuous convex function.
Assume that $u \in W^{1,p} (0,T ;E )$ with $p \in (1, \infty )$,
 $\eta \in L^{p'} (0,T ;E^{\ast } )$,
 and $\eta (t) \in \partial \phi (u(t))$ a.e. $t \in (0, T )$.
Then $t \mapsto \phi (u (t))$ is absolutely continuous on $[0,T]$
and 
\begin{equation*}
\frac{d}{dt} \phi (u (t)) = \LA \eta (t) ,  u'(t) \RA _E
~~~\text{ for a.e. }  t \in (0,T). 
\end{equation*}
\end{proposition}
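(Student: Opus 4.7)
\emph{Plan.} I plan to proceed in two stages: first establish absolute continuity of $t\mapsto\phi(u(t))$ on $[0,T]$ via the Moreau--Yosida regularization, and then identify the derivative directly from the subgradient inequality. Once AC is in hand, both the pointwise formula and the integral form of the chain rule fall out immediately.

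For each $\lambda>0$, the regularization $\phi_\lambda$ is finite-valued, convex, and G\^{a}teaux differentiable on $E$ with $d_E\phi_\lambda=(\partial_E\phi)_\lambda$ demicontinuous and bounded on bounded sets. Combined with the absolute continuity of $u:[0,T]\to E$, this yields — for instance by time-mollifying $u$, applying the classical $C^1$ chain rule, and passing to the limit via convexity of $\phi_\lambda$ — the regularized chain rule
\[
\phi_\lambda(u(t))-\phi_\lambda(u(s))=\int_{s}^{t}\langle(\partial_E\phi)_\lambda u(\tau),u'(\tau)\rangle_E\,d\tau.
\]
Since $\eta(\tau)\in\partial_E\phi(u(\tau))$ a.e., monotonicity applied to $(\partial_E\phi)_\lambda u(\tau)\in\partial_E\phi(J_\lambda u(\tau))$ versus $\eta(\tau)$ delivers the pointwise bound $|(\partial_E\phi)_\lambda u(\tau)|_{E^*}\le|\eta(\tau)|_{E^*}$, hence the $\lambda$-uniform estimate $|\phi_\lambda(u(t))-\phi_\lambda(u(s))|\le\int_{s}^{t}|\eta|_{E^*}|u'|_E\,d\tau$, whose right-hand side is AC in $s,t$ by H\"older. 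Combining this with $\phi_\lambda(u(t))\to\phi(u(t))$ — valid at any $t$ with $u(t)\in D(\phi)$ by \eqref{Yosida}, and extended to all $t\in[0,T]$ using the uniform AC bound together with $\phi(J_\lambda u)\le\phi_\lambda(u)\le\phi(u)$ and the lower semicontinuity of $\phi$ — shows that $t\mapsto\phi(u(t))$ is everywhere finite and absolutely continuous on $[0,T]$, hence differentiable a.e.

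For the pointwise identification, at any $\tau$ in the full-measure set where $u$ and $\phi\circ u$ are both differentiable and $\eta(\tau)\in\partial_E\phi(u(\tau))$, the subgradient inequality with $\xi=\eta(\tau)$ reads
\[
\phi(u(\tau+h))-\phi(u(\tau))\ge\langle\eta(\tau),u(\tau+h)-u(\tau)\rangle_E\qquad\forall h.
\]
Dividing by $h$ and letting $h\to 0^+$ yields $(\phi\circ u)'(\tau)\ge\langle\eta(\tau),u'(\tau)\rangle_E$, while $h\to 0^-$ yields the reverse inequality; no regularity of $\eta$ is needed because only $\eta(\tau)$ enters, and the desired equality follows. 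In my view the main technical hurdle is the first stage, and specifically the passage $\lambda\to 0$: promoting the estimate and the convergence $\phi_\lambda(u(t))\to\phi(u(t))$ from the a.e.\ set $\{t:u(t)\in D(\phi)\}$ to every $t\in[0,T]$ while ruling out $\phi(u(t))=+\infty$ anywhere requires carefully combining the monotonicity-based domination of $(\partial_E\phi)_\lambda u$ with the Moreau--Yosida convergence properties in \eqref{Yosida}; the final identification of the derivative is, by contrast, elementary once AC is secured.
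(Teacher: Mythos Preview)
The paper does not supply its own proof of Proposition~\ref{Pro2.3}; it is stated with the attribution ``see Lemma~4.1 of Colli~\cite{C}'' and used as a black box throughout. Your sketch follows precisely the standard Moreau--Yosida route that Colli employs: regularize, use the domination $|(\partial_E\phi)_\lambda u|_{E^*}\le|\eta|_{E^*}$ (your monotonicity computation is correct in the Banach setting with the duality-map definition of the Yosida approximation adopted in the paper), obtain a $\lambda$-uniform AC modulus $\int_s^t|\eta|_{E^*}|u'|_E$, pass to the limit, and then read off the derivative from the two-sided subgradient inequality. So there is no divergence in approach to report; your argument is essentially the cited one, and the steps you flag as delicate (extending $\phi_\lambda(u(t))\to\phi(u(t))$ from the a.e.\ set to all of $[0,T]$ via the uniform AC bound and lower semicontinuity) are exactly where the work lies.
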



\begin{proposition}
\label{Pro2.4} 
(see Proposition 2.3 of Akagi--Stefanelli \cite{AS_Cau})
Let $m , p \in (1, \infty )$ and $E ,Z $ be reflexive Banach spaces such that
$Z \hookrightarrow E$ and $E ^{\ast} \hookrightarrow Z ^{\ast }$ are dense.
Moreover, let $u \in L^{m} (0,T  ; Z) \cap W^{1,p} (0, T ; E )$,
$\xi \in L^{p'}(0,T ; E ^{\ast})$, and its derivative of distribution 
$\xi ' $ belong to $  L^{m'} (0,T ; Z ^{\ast }) +  L^{p'} (0,T ; E ^{\ast }) $.
Provided that $t_1 , t_2 \in (0,T)$
are Lebesgue points of the function $t \mapsto \LA \xi (t) , u (t) \RA _{E}$,
then it holds that
\begin{equation*}
\LA \xi ' , u \RA _{L^{m} (t _1 , t_2  ; Z ) \cap  L^{p} (t _1 , t_2  ; E)}
=
\LA \xi (t _2) , u (t _2) \RA _{E} - \LA \xi (t _1) , u (t _1) \RA _{E} 
-
\LA \xi  , u ' \RA _{  L^{p} (t _1 , t_2  ; E)} .
\end{equation*}
\end{proposition}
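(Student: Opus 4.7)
The approach is classical time mollification. Since $t_1, t_2 \in (0,T)$ are strictly interior, for $\epsilon > 0$ small enough so that $[t_1 - \epsilon, t_2 + \epsilon] \subset (0,T)$, one can define $u_\epsilon := \rho_\epsilon * u$ and $\xi_\epsilon := \rho_\epsilon * \xi$ using a symmetric smooth mollifier $\rho_\epsilon$ on $\R$. These functions are smooth in time on a neighborhood of $[t_1, t_2]$ with values in the appropriate spaces, and satisfy $(\xi_\epsilon)' = \rho_\epsilon * \xi'$ in $L^{m'}(Z^*) + L^{p'}(E^*)$ (splitting $\xi' = \eta_1 + \eta_2$ and mollifying each piece separately). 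For such smooth approximations, the classical product rule yields
\begin{equation*}
    \frac{d}{dt}\langle \xi_\epsilon(t), u_\epsilon(t) \rangle_E = \langle \xi_\epsilon'(t), u_\epsilon(t) \rangle + \langle \xi_\epsilon(t), u_\epsilon'(t) \rangle_E
\end{equation*}
pointwise in $t$, and integration over $(t_1, t_2)$ gives the target identity at the $\epsilon$-level.

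The limit of the interior terms as $\epsilon \to 0$ is routine. Standard mollification theory gives $\xi_\epsilon \to \xi$ strongly in $L^{p'}(t_1,t_2;E^*)$, $u_\epsilon \to u$ strongly in $L^m(t_1,t_2;Z) \cap L^p(t_1,t_2;E)$, $u_\epsilon' \to u'$ strongly in $L^p(t_1,t_2;E)$, and $(\xi_\epsilon)' \to \xi'$ strongly in $L^{m'}(t_1,t_2;Z^*) + L^{p'}(t_1,t_2;E^*)$. These strong convergences suffice to pass to the limit in $\int_{t_1}^{t_2} \langle \xi_\epsilon, u_\epsilon' \rangle_E\, dt$ and in the intersection-space pairing $\langle \xi_\epsilon', u_\epsilon \rangle_{L^m(Z) \cap L^p(E)}$.

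The main obstacle is convergence of the boundary values $\langle \xi_\epsilon(t_i), u_\epsilon(t_i) \rangle_E$ ($i = 1, 2$), because $\xi$ lies only in $L^{p'}(E^*)$ and so has no a priori pointwise meaning in $E^*$. Starting from
\begin{equation*}
    \langle \xi_\epsilon(t_i), u_\epsilon(t_i) \rangle_E = \iint \rho_\epsilon(t_i - s)\rho_\epsilon(t_i - r) \langle \xi(s), u(r) \rangle_E \, ds \, dr,
\end{equation*}
I would split $u(r) = u(s) + (u(r) - u(s))$, decomposing the double integral as $(\rho_\epsilon * g)(t_i) + R_\epsilon$, where $g := \langle \xi(\cdot), u(\cdot) \rangle_E \in L^1(0,T)$. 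The first summand converges to $g(t_i) = \langle \xi(t_i), u(t_i) \rangle_E$ precisely because $t_i$ is a Lebesgue point of $g$ --- this is the sole use of the hypothesis. For the remainder, the H\"older estimate $\|u(r) - u(s)\|_E \leq |r-s|^{1/p'} \|u'\|_{L^p((t_i-\epsilon,t_i+\epsilon);E)}$ (from $u \in W^{1,p}(E)$), combined with H\"older's inequality applied to $\rho_\epsilon$ in $L^p$ and $\xi$ in $L^{p'}(E^*)$ on the $\epsilon$-neighborhood of $t_i$, yields $|R_\epsilon| \leq C\,(2\epsilon)^{1/p'} \cdot \epsilon^{-1/p'}\, \|\xi\|_{L^{p'}((t_i - \epsilon, t_i + \epsilon); E^*)}$. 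The $\epsilon$-powers cancel and the final factor tends to $0$ by absolute continuity of the Lebesgue integral, so $R_\epsilon \to 0$, completing the proof.
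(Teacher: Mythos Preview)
Your argument is correct. The paper itself does not supply a proof of this proposition; it simply records the statement and refers the reader to Proposition~2.3 of Akagi--Stefanelli \cite{AS_Cau}. Your mollification-in-time approach is precisely the standard route for this kind of integration-by-parts formula, and the treatment of the boundary terms---splitting off $(\rho_\epsilon * g)(t_i)$ and controlling the remainder via the H\"older continuity of $u$ in $E$ coming from $u \in W^{1,p}(0,T;E)$---is exactly what is needed to exploit the Lebesgue-point hypothesis. The cancellation of the $\epsilon^{1/p'}$ from $|r-s|^{1/p'}$ against the $\epsilon^{-1/p'}$ from $\|\rho_\epsilon\|_{L^p}$, leaving the vanishing factor $\|\xi\|_{L^{p'}((t_i-\epsilon,t_i+\epsilon);E^\ast)}$, is the correct bookkeeping. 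One small remark: when you write $\langle \xi_\epsilon'(t), u_\epsilon(t)\rangle$ pointwise, it is worth noting explicitly that although $\xi' \in L^{m'}(Z^\ast)+L^{p'}(E^\ast)$, the mollified function $\xi_\epsilon$ is $C^\infty$ with values in $E^\ast$ (since $\xi \in L^{p'}(E^\ast)$), so the product rule is legitimate in the $E$--$E^\ast$ pairing and then reinterpreted in the $(L^m(Z)\cap L^p(E))$--$(L^{m'}(Z^\ast)+L^{p'}(E^\ast))$ duality after integration; you do this implicitly, but making it explicit would strengthen the write-up.
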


\subsection{Assumptions}
In order to  formulate our results, we introduce a 
	suitable Banach space $V$ and its subspace $X$.
\begin{itemize}
\item[(A.0)] Both $V$ and its dual $V ^{\ast }$ are uniformly convex real Banach spaces 
               and $X$ is a subspace of $V$ such that 
                 $X$ and its dual $X ^{\ast }$ are real reflexive Banach spaces.
                 Furthermore the embeddings
\begin{equation*}
X \hookrightarrow V,~~~~~V^{\ast} \hookrightarrow X^{\ast }
\end{equation*}
hold with densely defined compact canonical injections.
\end{itemize}
We also assume the following growth conditions:
\begin{itemize}
\item[(A.1)]  Functional $\phi : V \to ( - \infty , + \infty ] $ is proper lower semi-continuous convex
and $\psi : V \to ( - \infty , + \infty )$ is convex and G\^{a}teaux differentiable over $V$.
Assume that
there exist a constant $C  >0 $ satisfying
\begin{align}
& | u | ^p _V \leq C ( \psi (u) + 1) ~~~\forall u \in V,
	\label{A1}  \\
& | d _V \psi (u)  | ^{p'} _{V ^{\ast }} \leq C( |u |^p _V + 1 ) ~~~\forall u \in V,
	\label{A2}  \\
& | u | ^m _X \leq  C ( \phi (u) + 1)  ~~~\forall u  \in D(\phi ),	
	\label{A3}  \\
& | \eta   | ^{m'} _{X ^{\ast }} \leq C ( |u |^m _X + 1) ~~~\forall \LB u  ,\eta \RB \in \partial _X \phi _X,
	\label{A4} 
\end{align}
 with some exponents  $p , m \in (1, \infty )$,
 where $p' := p /(p-1) $ and $m' := m / (m-1 )$ (H\"{o}lder conjugate exponents).
\end{itemize}

\begin{remark}
i) From \eqref{A1} and \eqref{A3},
 we assume that $\phi \geq 0$ and $\psi \geq 0$ without loss of generality henceforth
(replace $\phi $ and $\psi$ with $\phi +1 $ and $\psi +1$, respectively).

ii) Combining \eqref{A1}  with \eqref{A2}, we can immediately see that
\begin{equation}
|d _V \psi (u)| ^{ p' }_{V ^{\ast}} \leq C ( \psi (u) +1 )~~~\forall u \in V.
\label{A5} 
\end{equation}
Moreover,
since $\psi (u) \leq \psi (0) + \LA d _V \psi (u) , u \RA _V  $
holds by the definition of the subdifferential, \eqref{A2} leads to 
\begin{align}
&\psi (u) \leq C(|u | ^p _V +1)~~~~~~\forall u \in V,
\label{A6} \\
&|u | ^{p } _V \leq C(|d _V \psi (u) | ^{p'} _{V^{\ast }} +1)~~~~~~\forall u \in V.
\label{AW1} 
\end{align}
From \eqref{A3} and \eqref{A4}, we can derive analogous inequalities  for $\phi $: 
\begin{align}
& |\eta | ^{ m' }_{X ^{\ast}} \leq C( \phi (u) +1 )~~~~~~\forall \LB  u , \eta \RB  \in \partial _X \phi _X,
\label{A7} \\ 
& \phi (u) \leq C ( |u | ^m _X +1 )~~~~~~~\forall  u  \in D(\partial _X \phi _X),
\label{A8} \\
& |u | ^{m } _X \leq C (|\eta | ^{m'} _{X^{\ast }} +1)~~~~~~\forall \LB  u , \eta \RB  \in \partial _X \phi _X.
\label{AW2} 
\end{align}

\end{remark}

\subsection{Statements of Main Results} 
The following result is concerned with the solvability of 
	the elliptic regularization problem for (AP).
\begin{theorem}
\label{Th1}
Assume {\rm (A.0)} and {\rm (A.1)} with $m >  p$.
 Then for any $\varepsilon > 0 $ and $f \in L^{p'}(0,T; V^{\ast })$,
   the time-periodic problem
\begin{equation*}
\text{\rm (AP)}_{\varepsilon  }
\begin{cases}
~ -\varepsilon (d _V \psi (u ' _{\varepsilon } (t) )) ' +
	d _V  \psi (u ' _{\varepsilon } (t ))  + \partial _X \phi _X (u _{\varepsilon }  (t )) \\
~\hspace{30mm}
		+ \varepsilon F_V(u  _{\varepsilon } (t)) + \varepsilon d _V  \psi (u  _{\varepsilon } (t) )
		\ni  f(t )  ~~~~~~~ t \in (0,T ) ~~\text{ in } X ^{\ast} , \\
~u_{\varepsilon } (0) = u_{\varepsilon } (T) ,\\
~d _V \psi (u ' _{\varepsilon } (0) ) =  d _V \psi ( u '_{\varepsilon } (T) ) ,
\end{cases}
\end{equation*}
possesses at least one solution  $u_{\varepsilon}$ satisfying
\begin{equation}
\begin{split}
\label{Regu_T1}
&u _{\varepsilon } \in W ^{1,p} (0,T ;V) \cap L^m (0,T ;X ), \\
&d_V \psi (u _{\varepsilon } ) , d_V \psi (u   ' _{\varepsilon }  ) , F_V (u _{\varepsilon } ) 
			\in  L^{p'} (0,T ;V ^{\ast } ), \\
&\eta _{\varepsilon } \in L^{m'} (0,T ;X ^{\ast } ) , \\
&( d_V \psi (u  ' _{\varepsilon }  ) )'  \in  L^{p'} (0,T ;V ^{\ast } ) + L^{m'} (0,T ;X ^{\ast } ),
\end{split}
\end{equation}
and
\begin{equation*}
 \int_{0}^{T} \LA d _V \psi (u' _{\varepsilon } (t) ) , u' _{\varepsilon } (t) \RA _V dt
 \leq \int_{0}^{T} \LA f (t)  , u' _{\varepsilon } (t) \RA _V dt ,
\end{equation*}
where 
 $\eta _ {\varepsilon }$ is the section of $ \partial _X \phi _X (u_{\varepsilon } )$ 
  satisfying {\rm (AP)}$_{\varepsilon } $, 
i.e., $\eta _ {\varepsilon }$ satisfies $\eta _ {\varepsilon }(t) \in \partial _X \phi _X (u _{\varepsilon } (t))$
and 
$-\varepsilon (d _V \psi (u ' _{\varepsilon } (t) )) ' + d_V \psi (u  ' _{\varepsilon } (t))
+ \eta _{\varepsilon }(t) + \varepsilon F _V( u _{\varepsilon } (t))
+ \varepsilon d_V \psi (u  _{\varepsilon } (t))
		 =f(t )  $ for a.e. $t \in (0,T )$.
\end{theorem}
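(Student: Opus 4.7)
The plan is to recast (AP)$_{\varepsilon}$ as an operator equation on the space of periodic functions
\[
  \mathcal{V} := \LD u \in W^{1,p}(0,T;V) \cap L^m(0,T;X) :~ u(0)=u(T) \RD,
\]
and to solve it by monotone-operator and compactness methods together with a further approximation of $\partial_X \phi_X$. As stressed in the introduction, the time-periodic setting does not admit a direct WED variational description, because the unregularized term $d_V \psi(u')$ is not the derivative of a convex functional on $\mathcal{V}$; a monotonicity/approximation approach is therefore the natural route.

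I would first regularize $\partial_X \phi_X$ by its Moreau--Yosida approximation $d_X(\phi_X)_\lambda$ on $X$, producing, for each $\lambda > 0$, the single-valued periodic problem
\[
  -\varepsilon (d_V\psi(u_\lambda'))' + d_V\psi(u_\lambda') + d_X(\phi_X)_\lambda(u_\lambda) + \varepsilon F_V(u_\lambda) + \varepsilon d_V\psi(u_\lambda) = f,
\]
subject to $u_\lambda(0) = u_\lambda(T)$ together with the natural secondary condition $d_V\psi(u_\lambda'(0)) = d_V\psi(u_\lambda'(T))$ that arises from testing against periodic functions. Read in weak form on $\mathcal{V}$, the left-hand side defines a bounded, demi-continuous operator into $\mathcal{V}^{\ast}$ which is monotone (the elliptic-in-time piece $-\varepsilon(d_V\psi(u'))'$ is nonnegative after integration by parts under periodicity, and the other pieces are monotone in their respective variables) and coercive (the $\varepsilon F_V(u)$ contribution supplies strict coercivity through \eqref{A1}). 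A surjectivity theorem for maximal monotone coercive operators on reflexive Banach spaces (alternatively, a Schauder fixed-point argument exploiting the compact embedding $X \hookrightarrow V$) then yields a solution $u_\lambda \in \mathcal{V}$.

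Next I would derive estimates on $u_\lambda$ uniformly in $\lambda$. Testing the regularized equation against $u_\lambda$ and against $u_\lambda'$ and invoking Propositions \ref{Pro2.3}--\ref{Pro2.4} to integrate by parts in time, all boundary contributions from $(\phi_X)_\lambda(u_\lambda)$, $\tfrac{1}{2}|u_\lambda|_V^2$ and $\psi(u_\lambda)$ vanish by periodicity. Using (A.1)--(A.4) one thereby controls $u_\lambda$ in $W^{1,p}(0,T;V) \cap L^m(0,T;X)$ and, in turn, $d_V\psi(u_\lambda)$, $d_V\psi(u_\lambda')$, $F_V(u_\lambda)$ and $\eta_\lambda := d_X(\phi_X)_\lambda(u_\lambda)$ in the spaces of \eqref{Regu_T1}; the equation itself then bounds $(d_V\psi(u_\lambda'))'$ in $L^{p'}(0,T;V^{\ast}) + L^{m'}(0,T;X^{\ast})$. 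The main obstacle here --- the one explicitly flagged in the introduction --- is that $u_\lambda(0)$ is $\lambda$-dependent and unknown, so there is no fixed initial datum to absorb zeroth-order terms; the coercivity of $\phi$ furnished by (A.3) (equivalently, of $\partial_X\phi_X$ via \eqref{AW2}) is precisely what closes the a priori estimate for the periodic problem.

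Finally I would pass to the limit $\lambda \to 0$. Weak and weak-$\ast$ compactness yield limits $u_\varepsilon$, $\eta_\varepsilon$, etc.; the compactness of $X \hookrightarrow V$ together with an Aubin--Lions-type argument gives strong convergence $u_\lambda \to u_\varepsilon$ in $L^p(0,T;V)$, and $W^{1,p}(0,T;V) \hookrightarrow C([0,T];V)$ transfers $u_\lambda(0) = u_\lambda(T)$ to $u_\varepsilon(0) = u_\varepsilon(T)$. Since $\eta_\lambda \in \partial_X\phi_X(J_\lambda u_\lambda)$ with $J_\lambda u_\lambda \to u_\varepsilon$, Proposition \ref{Pro2.2} identifies $\eta_\varepsilon \in \partial_X\phi_X(u_\varepsilon)$, while the claimed energy inequality follows from the lower semicontinuity of $\int_0^T \LA d_V\psi(u'), u' \RA_V \, dt$ and the convergence of $\int_0^T \LA f, u_\lambda' \RA_V \, dt$. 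The most delicate step, I expect, is carrying the secondary boundary condition $d_V\psi(u_\lambda'(0)) = d_V\psi(u_\lambda'(T))$ through the limit; the uniform bound on $(d_V\psi(u_\lambda'))'$ in $L^{p'}(0,T;V^{\ast}) + L^{m'}(0,T;X^{\ast})$ should give well-defined traces at $0$ and $T$ that pass weakly to the corresponding condition for $u_\varepsilon$.
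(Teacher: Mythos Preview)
Your main line of argument has a genuine gap: the operator you assemble on $\mathcal V$ is \emph{not} monotone. The offending term is $u \mapsto d_V\psi(u')$ (the one without the factor $\varepsilon$ and without the extra time derivative). Monotonicity of $d_V\psi$ only gives
\[
  \int_0^T \LA d_V\psi(u_1') - d_V\psi(u_2'),\, u_1' - u_2' \RA_V \, dt \ \geq \ 0,
\]
whereas monotonicity of your operator on $\mathcal V$ would require the same inequality with $u_1-u_2$ in the second slot; there is no reason for
\[
  \int_0^T \LA d_V\psi(u_1') - d_V\psi(u_2'),\, u_1 - u_2 \RA_V \, dt
\]
to have a sign, and under (A.1) alone it does not. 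This cross pairing is precisely the obstruction singled out in the paper (see the Remark after Step~2), and it also blocks uniqueness for (AP)$_\varepsilon$. So the surjectivity theorem for monotone coercive operators does not apply, and your ``alternatively, a Schauder fixed-point argument'' aside is in fact not an alternative but the only viable route---and it requires real work that your outline does not supply.

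The paper's proof proceeds exactly by isolating this bad term: one freezes $h := -d_V\psi(u')$ as a given forcing, so that the remaining periodic problem (AP)$^h$ is governed by an honest convex functional (after Moreau--Yosida regularization $\phi_\lambda$ on $V$, then $\lambda\to 0$). Crucially, this residual problem has a \emph{unique} solution $u_h$, so the map $\beta: h \mapsto -d_V\psi(u_h')$ is well defined; one then applies the Schauder--Tychonoff theorem in $L^{p'}(0,T;V^\ast)$ with its weak topology to obtain a fixed point $h_0$, and $u_{h_0}$ solves (AP)$_\varepsilon$. The energy inequality is not obtained by lower semicontinuity at the last step but is traced back through the approximation (Lemma~\ref{Lem3.1}): at the $\lambda$-level one has $-\int_0^T \LA (d_V\psi(u_\lambda'))', u_\lambda' \RA_V \, dt \geq 0$ via the Legendre--Fenchel transform, which yields $-\int_0^T\LA h, u_\lambda'\RA_V \, dt \leq \int_0^T\LA f, u_\lambda'\RA_V \, dt$, an inequality that survives the limits and becomes the claimed estimate once $h=h_0=-d_V\psi(u_{h_0}')$.
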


Via this approximation,
the solvability of original problem will be assured as follows:
\begin{theorem}
\label{Th2}
 Assume {\rm (A.0)} and {\rm (A.1)}.
  Then for every $f \in L^{p ' } (0, T ; V ^{\ast })$,
   {\rm (AP)} possesses at least one solution $u$ satisfying
\begin{equation}
\begin{split}
\label{Regu_T2}
  & u \in W ^{1,p} (0,T ;V) \cap L^\infty (0,T ;X ), 
\\
  & \phi(u(t)) \ \text{ is absolutely continuous on} \ [0,T], 
\\
  & d_V \psi (u'), \  \eta  \in  L^{p'} (0,T ;V ^{\ast } ), 
\end{split}
\end{equation}
 where $\eta $ is the section of $ \partial_V  \phi (u )$ satisfying {\rm (AP)}, 
   i.e., $\eta $ satisfies $\eta (t) \in \partial_V \phi  (u  (t))$
    and $d_V \psi (u' (t)) + \eta (t) = f(t )  $ for a.e. $t \in (0,T )$.
\end{theorem}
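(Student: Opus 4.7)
The plan is to construct a periodic solution by sending $\varepsilon \to +0$ in the family of approximate solutions $u_{\varepsilon}$ supplied by Theorem \ref{Th1}. In the case $m > p$, Theorem \ref{Th1} applies directly; in the case $m \leq p$, where the growth hypothesis $m > p$ of Theorem \ref{Th1} fails, I would first enhance $\phi$ by adding an auxiliary convex term of the form $\sigma |u|_X^{\tilde m}/\tilde m$ with some $\tilde m > p$, apply Theorem \ref{Th1} to the modified periodic problem, and then pass $\sigma, \varepsilon \to +0$.

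The main technical work lies in establishing $\varepsilon$-uniform a priori bounds. The inequality supplied by Theorem \ref{Th1}, combined with the subgradient inequality $\LA d_V \psi(u'_{\varepsilon}), u'_{\varepsilon} \RA_V \geq \psi(u'_{\varepsilon}) - \psi(0)$, Young's inequality, and \eqref{A1}, yields $\|u'_{\varepsilon}\|_{L^p(0,T;V)} \leq C$ uniformly in $\varepsilon$. The delicate step, and the main obstacle peculiar to the periodic setting, is to bound $u_{\varepsilon}$ itself: since $u_{\varepsilon}(0)$ is unknown and depends on $\varepsilon$, no Gronwall argument is available. To overcome this I would test $\text{(AP)}_{\varepsilon}$ against $u_{\varepsilon}$ and invoke Proposition \ref{Pro2.4} together with the periodicity conditions $u_{\varepsilon}(0) = u_{\varepsilon}(T)$ and $d_V \psi(u'_{\varepsilon}(0)) = d_V \psi(u'_{\varepsilon}(T))$ to recast the integration-by-parts term $-\varepsilon \int_0^T \LA (d_V \psi(u'_{\varepsilon}))', u_{\varepsilon} \RA dt$ as the nonnegative quantity $\varepsilon \int_0^T \LA d_V \psi(u'_{\varepsilon}), u'_{\varepsilon} \RA dt$. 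The subgradient inequality $\LA \eta_{\varepsilon}, u_{\varepsilon} \RA_X \geq \phi(u_{\varepsilon}) - \phi(0)$ (after translating so that $0 \in D(\phi)$, if necessary) combined with the $X$-coercivity \eqref{A3} then produces a uniform bound on $\int_0^T \phi(u_{\varepsilon}) dt$, hence on $\|u_{\varepsilon}\|_{L^m(0,T;X)}$, and \eqref{A4} yields $\|\eta_{\varepsilon}\|_{L^{m'}(0,T;X^*)} \leq C$. A chain-rule argument via Proposition \ref{Pro2.3} together with periodicity upgrades the $L^m(X)$ bound to $\|u_{\varepsilon}\|_{L^\infty(0,T;X)} \leq C$.

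These estimates allow extraction of a subsequence (not relabeled) along which $u_{\varepsilon} \rightharpoonup u$ weakly in $W^{1,p}(0,T;V)$ and weakly-$*$ in $L^\infty(0,T;X)$, $d_V \psi(u'_{\varepsilon}) \rightharpoonup \zeta$ weakly in $L^{p'}(0,T;V^*)$, and $\eta_{\varepsilon} \rightharpoonup \eta$ weakly in $L^{m'}(0,T;X^*)$. The Aubin--Lions lemma, based on the compact injection $X \hookrightarrow V$ from (A.0), yields $u_{\varepsilon} \to u$ strongly in $C([0,T];V)$, which both preserves periodicity $u(0) = u(T)$ and, together with the uniform $L^{p'}(V^*)$ bound on $d_V\psi(u'_\varepsilon)$, $F_V(u_\varepsilon)$, $d_V\psi(u_\varepsilon)$, shows that the $\varepsilon$-dependent terms in $\text{(AP)}_{\varepsilon}$ vanish in distribution, giving $\zeta + \eta = f$ a.e. To identify $\eta$ I would apply Proposition \ref{Pro2.2} to the integrated version of $\partial_X \phi_X$ on $L^m(0,T;X) \times L^{m'}(0,T;X^*)$; the required $\limsup$ inequality follows from the identity obtained by testing $\text{(AP)}_{\varepsilon}$ against $u_{\varepsilon}$ via the integration-by-parts and periodicity trick above, together with the weak-strong pairing convergence $\int_0^T \LA d_V \psi(u'_{\varepsilon}), u_{\varepsilon}\RA dt \to \int_0^T \LA \zeta, u \RA dt$. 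Since $\eta = f - \zeta \in L^{p'}(0,T;V^*)$, relation \eqref{sub} upgrades $\eta(t) \in \partial_X \phi_X(u(t))$ to $\eta(t) \in \partial_V \phi(u(t))$. Finally, Proposition \ref{Pro2.3} combined with periodicity gives $\int_0^T \LA \eta, u' \RA dt = \phi(u(T))-\phi(u(0)) = 0$, which via $f = \zeta + \eta$ produces the $\limsup$ inequality needed to identify $\zeta = d_V \psi(u')$ through a second application of Proposition \ref{Pro2.2}, this time for the maximal monotone operator $d_V \psi$ itself, completing the proof.
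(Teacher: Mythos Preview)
Your overall strategy for the case $m>p$ is exactly the paper's Step~3: pass $\varepsilon\to 0$ in $(\mathrm{AP})_\varepsilon$, bound $u'_\varepsilon$ in $L^p(0,T;V)$ via the inequality of Theorem~\ref{Th1}, test by $u_\varepsilon$ using the periodic integration by parts to control $\int_0^T\LA\eta_\varepsilon,u_\varepsilon\RA_X\,dt$, identify $\eta$ by Proposition~\ref{Pro2.2} and then $\zeta=d_V\psi(u')$ via the chain rule and periodicity. One point deserves correction: you claim a uniform bound $\|u_\varepsilon\|_{L^\infty(0,T;X)}\leq C$ from Proposition~\ref{Pro2.3} applied to $u_\varepsilon$, but at the $\varepsilon$-level the section $\eta_\varepsilon$ is only known to lie in $\partial_X\phi_X(u_\varepsilon)\subset L^{m'}(0,T;X^\ast)$, not in $L^{p'}(0,T;V^\ast)$, and $u'_\varepsilon$ is only in $V$, so the hypotheses of Proposition~\ref{Pro2.3} are not met. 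The paper circumvents this by deferring the $L^\infty(0,T;X)$ conclusion to the limit $u$: once $\eta=f-d_V\psi(u')\in L^{p'}(0,T;V^\ast)$ and $\eta\in\partial_V\phi(u)$ are established, Proposition~\ref{Pro2.3} applies with $E=V$ and gives the absolute continuity of $\phi(u(\cdot))$, hence $u\in L^\infty(0,T;X)$ via \eqref{A3}. This is a minor re-ordering, not a gap in your argument.

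The more substantive difference lies in the case $m\leq p$. You propose to perturb $\phi$ by $\sigma|u|_X^{\tilde m}/\tilde m$ with $\tilde m>p$; the paper instead uses $\Phi:=\phi+\tfrac{\mu}{1+\alpha}\phi^{1+\alpha}$ with $\alpha>p/m-1$. The paper's choice is tailored so that (as shown in the Appendix) $\partial_V\Phi=(1+\mu\phi^\alpha)\partial_V\phi$ and $\partial_X\Phi_X=(1+\mu\phi_X^\alpha)\partial_X\phi_X$, which keeps every section of $\partial_V\Phi$ a scalar multiple of a section of $\partial_V\phi$; this makes both the a~priori estimate $|\eta_\mu|_{L^{p'}(0,T;V^\ast)}\leq C$ and the vanishing $|\mu\phi^\alpha(u_\mu)\eta_\mu|_{L^{p'}(0,T;V^\ast)}\to 0$ immediate. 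Your perturbation is more natural at first sight, but since $|\cdot|_X^{\tilde m}$ is not continuous on $V$ the sum rule for $\partial_V$ is not available, and one only knows $\partial_V\Phi_\sigma(u)=(\partial_X\phi_X(u)+\sigma\partial_X(|\cdot|_X^{\tilde m}/\tilde m)(u))\cap V^\ast$; the two pieces need not lie in $V^\ast$ separately, which complicates both the identification of the limit section in $\partial_V\phi(u)$ and the proof that the $\sigma$-term vanishes. This can likely be made to work with extra effort, but the paper's device is cleaner and worth knowing. Note also that the paper first applies the already-proved $p<m$ case of Theorem~\ref{Th2} (not merely Theorem~\ref{Th1}) to $\Phi$, obtaining solutions $u_\mu$ of $(\mathrm{AP})^{\star}_{\mu}$ with a single parameter, and then lets $\mu\to 0$; this avoids the double limit $\sigma,\varepsilon\to 0$ you sketch.
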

\begin{remark}
\label{Compare} 
i) In so far as  $A$ is given as a G\^{a}teaux differential, i.e., 
$A = d_V \psi $ in \eqref{Int03},
we see that our result can cover that of  Akagi--Stefanelli \cite{AS_Per}
by letting $V= X$ be Hilbert spaces and $p= 2$.

ii) In Akagi-Stefanelli \cite{AS_Cau} (Cauchy problem),
     they assume almost the same growth condition as (A.1)
      and allow that constants $C$ in \eqref{A3} and \eqref{A4} depend on $| u | _V$.
    In order to reveal a difference of  nature between Cauchy problem and the periodic problem,
we here check  how to derive  a priori estimate of solution to (AP).
 Testing (AP) by $u'$ and applying Proposition \ref{Pro2.3}, we get
\begin{equation*}
 \LA d _V \psi (u' (t)) , u' (t)  \RA _V + \frac{d}{dt} \phi (u(t)) = \LA f (t) , u' (t) \RA _V . 
\end{equation*}
Integrating over $[0,t]$ and using \eqref{A1}, we have for every $t \in [0,T]$
\begin{equation*}
\int_{0}^{t} | u' (s) | ^{p}_V  ds + \phi (u(t))
\leq C \LC  | f | ^{p'}_{L^{p'} (0,T ; V^{\ast})} + \phi (u(0)) + T
 \RC  
\end{equation*}
with some suitable constant $C > 0 $.
Hence for Cauchy problem, since $ u(0 ) $ is given data, 
one can derive estimates of $|u' | _{L^p (0,T ; V )} $ and $\sup _t \phi (u(t))$ immediately.
 For the periodic problem, however, 
since $ u(0 ) $ is unknown, 
 the boundedness of $|u' | _{L^p (0,T ; V )}$ can be assured
  but one can not get any information on $\sup _t \phi (u(t))$ 
   from the procedure above.
\end{remark}


\section{ Proofs of Main Results  } 
\subsection{Scheme of Proofs} 

 Our proofs consist of the following four steps:
\begin{itemize}
\item[\bf Step 1]
 We first deal with the case where $m >  p$  and show that 
\begin{equation*}
\text{(AP)}^{h} _{\lambda }
\begin{cases}
~ -\varepsilon (d _V \psi (u ' _{\lambda  } (t) )) ' + \varepsilon d_V \psi (u  _{\lambda  } (t))
		+ \partial _V \phi _{\lambda } (u _{\lambda } (t )) + \varepsilon F _V( u _{\lambda } (t))
	 \\
~\hspace{5cm}= f(t ) +h (t) ~~~~~~~ t \in (0,T ) ~~\text{ in } V ^{\ast} , \\
~u_{\lambda } (0) = u_{\lambda } (T) ,\\
~d _V \psi (u ' _{\lambda } (0) ) =  d _V \psi ( u '_{\lambda } (T) ) ,
\end{cases}
\end{equation*}
possesses a unique solution for every $f, h \in L^{p' } (0, T ; V ^{\ast})$ and $\lambda , \varepsilon >0 $,
 where $\phi _\lambda $ denotes the Moreau--Yosida regularization of $\phi$
  and $\cdot ~' $ stands for the time derivative.
 By letting $\lambda \to 0$, 
   we next  show the following result.
\begin{lemma}\label{lemma3.1} 
	Let $ 1< p < m $ and $f, h \in L^{p'}(0,T; V^\ast)$. Then for any $\varepsilon >0$,   
  \begin{equation*}
    \text{{\rm (AP)}}^{h} 
     \begin{cases}
       ~ -\varepsilon (d _V \psi (u ' _{h  } (t) )) ' + \varepsilon d_V \psi (u  _{h  }(t))
		+ \partial _X \phi _X (u _{h } (t )) + \varepsilon F _V( u _{h } (t))
 \\
          ~\hspace{5cm}\ni f(t ) +h (t) ~~~~~~~ t \in (0,T ) ~~\text{ in } X ^{\ast} ,
 \\
            ~u_h (0) = u_h (T) ,
 \\
              ~d _V \psi (u ' _h (0) ) =  d _V \psi ( u '_h (T) ) ,
      \end{cases}
   \end{equation*}
     has a unique solution satisfying
  \begin{equation}
    \begin{split}
     \label{Regu_S1}
       & u _h \in W ^{1,p} (0,T ;V) \cap L^m (0,T ;X ), 
 \\
       & d_V \psi (u _h ) , d_V \psi (u   ' _h  ) , F_V (u _h ) \in  L^{p'} (0,T ;V ^{\ast } ), 
 \\
       & \eta _h \in L^{m'} (0,T ;X ^{\ast } ), 
 \\
       & ( d_V \psi (u  ' _h  ) )'  \in  L^{p'} (0,T ;V ^{\ast } ) + L^{m'} (0,T ;X ^{\ast } ), 
 \\
    \end{split}
  \end{equation}
   where $\eta _h $ is the section of $\partial _X \phi _X (u_h )$ satisfying {\rm (AP)}$^h $,
    i.e., $\eta _h (t) \in \partial _X \phi _X (u _h (t))$ and  
      $-\varepsilon (d _V \psi (u ' _{h  } (t) )) ' + \varepsilon d_V \psi (u  _{h  } (t))
		+ \eta _h(t) + \varepsilon F _V( u _{h } (t)) =f(t ) +h (t) $ for a.e. $t \in (0,T )$. 
\end{lemma}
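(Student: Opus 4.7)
The plan is to construct $u_h$ as the limit, as $\lambda\to 0$, of the unique solutions $u_\lambda$ of {\rm (AP)}$^h_\lambda$ provided by Step~1. The first task is $\lambda$-uniform a priori estimates, and here the natural test function is $u_\lambda$ itself rather than $u_\lambda'$. Indeed, testing {\rm (AP)}$^h_\lambda$ with $u_\lambda'$ turns every term on the left-hand side into the time derivative of a convex functional via Proposition \ref{Pro2.3}, and the Legendre--Fenchel identity $\LA d_V\psi(v),v\RA_V=\psi(v)+\psi^*(d_V\psi(v))$ combined with the boundary condition $d_V\psi(u_\lambda'(0))=d_V\psi(u_\lambda'(T))$ forces even the $-\varepsilon(d_V\psi(u_\lambda'))'$ contribution to vanish, so that one obtains only the tautological identity $\int_0^T\LA f+h,u_\lambda'\RA_V dt=0$. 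Testing instead with $u_\lambda$, integrating by parts on the $(d_V\psi(u_\lambda'))'$-term via Proposition \ref{Pro2.4}, and using the two periodicity conditions to cancel all boundary contributions, produces the genuine energy inequality
\begin{equation*}
 \varepsilon\int_0^T\!\LA d_V\psi(u_\lambda'),u_\lambda'\RA_V dt
  + \varepsilon\int_0^T\!\psi(u_\lambda)\,dt
  + \int_0^T\!\phi_\lambda(u_\lambda)\,dt
  + \varepsilon\int_0^T\!|u_\lambda|_V^2\,dt
  \leq \int_0^T\!\LA f+h,u_\lambda\RA_V dt + C.
\end{equation*}

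Combined with \eqref{A1}, \eqref{A3}, \eqref{A5}, \eqref{A7}, Young's inequality, and the inclusions $\phi_\lambda(u)\ge\phi(J_\lambda u)$ and $d_V\phi_\lambda(u)\in\partial_V\phi(J_\lambda u)$ from \eqref{Yosida}, this furnishes $\lambda$-uniform bounds on $u_\lambda'$ and $u_\lambda$ in $L^p(0,T;V)$, on $J_\lambda u_\lambda$ in $L^m(0,T;X)$, on $d_V\psi(u_\lambda')$ in $L^{p'}(0,T;V^*)$, and on $d_V\phi_\lambda(u_\lambda)$ in $L^{m'}(0,T;X^*)$. Reading $(d_V\psi(u_\lambda'))'$ off the equation then yields its bound in $L^{p'}(V^*)+L^{m'}(X^*)$. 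The coercivity of $\phi$ on $X$ encoded in \eqref{A3} is essential here, since it is what allows the right-hand side $\int\LA f+h,u_\lambda\RA$ to be absorbed despite $u_\lambda(0)$ being an unknown $\lambda$-dependent quantity, in contrast to the Cauchy problem analysed in \cite{AS_Cau}.

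With these uniform bounds in hand, I would extract weakly convergent subsequences $u_\lambda\rightharpoonup u_h$, $d_V\psi(u_\lambda')\rightharpoonup \xi$, and $d_V\phi_\lambda(u_\lambda)\rightharpoonup \eta_h$ in the respective Bochner spaces. An Aubin--Lions argument based on the compactness $X\hookrightarrow V$ upgrades this to strong convergence $u_\lambda\to u_h$ in $L^p(0,T;V)$, which together with the weak convergence in $W^{1,p}(0,T;V)$ yields the periodicity $u_h(0)=u_h(T)$ via the continuity of the trace. Since $|J_\lambda u_\lambda-u_\lambda|_V^p\leq 2\lambda\,\phi_\lambda(u_\lambda)\to 0$ in $L^1(0,T)$, the same strong limit holds for $J_\lambda u_\lambda$. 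The identifications $\xi=d_V\psi(u_h')$ and $\eta_h\in\partial_X\phi_X(u_h)$ both rely on Proposition \ref{Pro2.2}: for $\eta_h$ one applies that closedness result to the maximal monotone operator $\partial_V\phi$ at the pairs $[J_\lambda u_\lambda,d_V\phi_\lambda(u_\lambda)]$, securing the required $\limsup$-inequality by rewriting $\int\LA d_V\phi_\lambda(u_\lambda),u_\lambda\RA$ through the equation itself, and then \eqref{sub} lifts $\eta_h$ from $\partial_V\phi$ to $\partial_X\phi_X$; the identification of $\xi$ is analogous. The most delicate single step will be the preservation of $d_V\psi(u_h'(0))=d_V\psi(u_h'(T))$ in the limit, because the trace of $d_V\psi(u_\lambda')$ at $t=0,T$ is only well-defined in $V^*+X^*$, and one must verify that these traces pass weakly to $d_V\psi(u_h'(0))$ and $d_V\psi(u_h'(T))$. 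Uniqueness is then standard: subtracting two candidate solutions and testing with their difference forces, by strict monotonicity of $d_V\psi$, $\partial_X\phi_X$, $F_V$ (cf.\ Proposition \ref{Pro2.1}) together with $\varepsilon>0$, the difference to vanish.
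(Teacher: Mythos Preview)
Your overall strategy coincides with the paper's: test {\rm (AP)}$^h_\lambda$ with $u_\lambda$ (you correctly explain why $u_\lambda'$ is useless for the periodic problem), derive $\lambda$-uniform bounds, extract weak limits, upgrade to strong convergence via Aubin--Lions/Simon, and identify the nonlinear limits through Proposition~\ref{Pro2.2}. Two points, however, need correction.

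First, the closedness argument for $\eta_h$ cannot be run in $V$ as you propose: the only weak convergence available for $d_V\phi_\lambda(u_\lambda)$ is in $L^{m'}(0,T;X^*)$, not in any $L^{q}(0,T;V^*)$, so Proposition~\ref{Pro2.2} must be applied to the maximal monotone operator $\partial_X\phi_X$ in the duality $L^m(0,T;X)\times L^{m'}(0,T;X^*)$ directly. There is then nothing to ``lift'' via \eqref{sub}; $\eta_h\in\partial_X\phi_X(u_h)$ is exactly what the lemma asserts. Second, the $\limsup$-via-equation computation cannot be carried out on the full interval $(0,T)$: Proposition~\ref{Pro2.4} requires the endpoints to be Lebesgue points of $t\mapsto\LA\xi(t),u(t)\RA_V$, and $\xi$ is only in $L^{p'}(0,T;V^*)\cap C([0,T];X^*)$. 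The paper remedies this by introducing a full-measure set $\Upsilon\subset(0,T)$ of admissible times and performing the argument on arbitrary $(t_1,t_2)$ with $t_i\in\Upsilon$. The trace issue you flag for $d_V\psi(u_h')$ is in fact resolved separately: a second application of Simon's theorem to $\{d_V\psi(u_\lambda')\}$ (bounded in $L^{p'}(0,T;V^*)$ with derivative bounded in $L^{p'}(V^*)+L^{m'}(X^*)$, and $V^*\hookrightarrow X^*$ compact) gives strong convergence in $C([0,T];X^*)$, passing the periodicity to $\xi$ directly. Finally, the Moreau--Yosida estimate is $|J_\lambda u_\lambda-u_\lambda|_V^2\le 2\lambda\,\phi_\lambda(u_\lambda)$, with a square rather than a $p$-th power.
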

 Note that $  L^{p'} (0,T ;V ^{\ast } ) + L^{m'} (0,T ;X ^{\ast } ) = L^{m'} (0,T ;X ^{\ast } ) $ holds,
  since  we here assume that $p <m $.
   However, we still write as \eqref{Regu_S1} for the sake of consistency with the notation in \cite{AS_Cau}.

\item[\bf Step 2]
Let $u _h $ be the unique solution of (AP)$^h$.
Define an operator $\beta $ by 
\begin{equation}
\beta : h \mapsto u_h \mapsto - d _V \psi (u ' _ h ), 
\label{Scauder}
\end{equation}
namely, $\beta (h) := -d _V \psi ( u_h ' )$.
We apply  the Schauder--Tychonoff fixed point theorem to $\beta $ in
$\Xi := L^{p'} (0,T ; V ^{\ast })$ endowed with its weak topology.  
 Let $\bar{h}$ be the fixed point of $\beta$, then $u_{\bar{h}}$ gives a solution of 
 	(AP)$_{\varepsilon}$ satisfying properties given in Theorem \ref{Th1}.

\item[\bf Step 3]
  By establishing a priori estimates for solutions of (AP)$_\varepsilon$  
  	independent of the parameter $\varepsilon $ and discussing the convergence of solutions, we show the solvability of (AP) for $p <m $.

\item[\bf Step 4]
We deal with the case where $ m \leq p $.
 
\end{itemize}


\subsection{Step 1 (Existence of Solutions of Auxiliary Equations)}
 \quad Since our argument in this step is based on the procedure in Section 3 of Akagi--Stefanelli   
   \cite{AS_Cau}, one may find some duplications in this subsection.
  We first prepare the following result on some variational problem
    associated with (AP)$^h_{\lambda }$.

\begin{lemma}
\label{LEM3-1} 
 Let $p < m$ and put $\bar{p} := \max ~\! \{ p, 2 \}$, $\Gamma := L^{\bar{p}} (0 ,T ;V )$, 
   $\Gamma^{\ast} := L^{(\bar{p})'} (0 ,T ;V ^{\ast})$,
     $\LA \cdot , \cdot \RA _{\Gamma } := \DS \int ^T _ 0 \LA \cdot , \cdot \RA _V dt $,
      and let $h , f  \in \Gamma ^{\ast}$.
 Then for any $\varepsilon , \lambda >0$ the functional over $\Gamma $ defined by 
\begin{equation*}
  I_{\varepsilon, \lambda } (u) :=
   \begin{cases}
     \DS \int_{0}^{T} \LC \phi _{\lambda } (u (t)) + \varepsilon \psi (u ' (t)) + \varepsilon \psi (u  (t))
				  + \frac{\varepsilon }{2} |u (t)| ^2 _V - \LA f (t) +h (t) , u(t) \RA _V \RC dt
\\ 
      \hspace{12mm} \text{ if } u \in W^{1,p}(0,T; V), ~ u(0) = u(T), 
				~ \psi (u(\cdot)),  \psi (u'(\cdot)) \in L^1(0,T),
\\   
       + \infty \hspace{4mm}  \text{ otherwise, }
    \end{cases}
\end{equation*}
 admits a global minimizer $u_{\lambda } $ on $\Gamma $.
   Moreover, $u _{\lambda } \in  W^{1, \bar{p} }  (0 ,T ;V ) $ 
    is a unique solution to {\rm (AP)}$^{h} _{\lambda }$ satisfying
\begin{equation}
   \begin{split}
     &  u_{\lambda }(0) = u_{\lambda }(T), ~~ 
          d_V \psi (u'_{\lambda } (0) ) =d_V  \psi (u'_{\lambda } (T) ),
\\[2mm] 
     &  d_V  \psi (u'_{\lambda })  \in   W ^{1, (\bar{p} )' }   (0,T ; V^{\ast } ) , 
          ~ d_V  \psi (u_{\lambda }) ,
	        ~\partial_V\phi_{\lambda } (u_{\lambda }), ~F_V(u_{\lambda }) 
               \in  L^{(\bar{p})'} (0,T ; V^{\ast }) .
   \end{split}
\label{Reg3-1} 
\end{equation}
\end{lemma}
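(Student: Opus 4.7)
The plan is to apply the direct method of the calculus of variations to $I_{\varepsilon,\lambda}$ on $\Gamma$: obtain a minimizer $u_\lambda$ in the admissible periodic class, identify (AP)$^{h}_{\lambda}$ as the associated Euler-Lagrange system with the natural boundary condition $d_V\psi(u'(0))=d_V\psi(u'(T))$, and deduce uniqueness from strict convexity.

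First, for coercivity and weak lower semicontinuity: the bound $|v|_V^p\le C(\psi(v)+1)$ from \eqref{A1}, the non-negativity of $\phi_\lambda$, and Young's inequality applied to the linear pairing $\langle f+h, u\rangle_V$ together yield
\begin{equation*}
I_{\varepsilon,\lambda}(u) \;\ge\; c_\varepsilon\bigl(\|u\|_{W^{1,p}(0,T;V)}^{p} + \|u\|_{L^{2}(0,T;V)}^{2}\bigr) - C_{\varepsilon,f,h},
\end{equation*}
so minimizing sequences remain bounded in the admissible class (and ultimately in $W^{1,\bar p}(0,T;V)$ once one exploits the $W^{1,p}\hookrightarrow C([0,T];V)$ embedding together with the $\tfrac{\varepsilon}{2}|u|_V^2$ term). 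Each integrand is convex in $(u,u')$ — $\phi_\lambda$ and $\psi$ are convex continuous on $V$, $\tfrac12|\cdot|_V^2$ is convex — so $I_{\varepsilon,\lambda}$ is convex and weakly lower semicontinuous. Periodicity $u(0)=u(T)$ survives weak convergence because $u\mapsto u(0)-u(T)$ is continuous from $W^{1,p}(0,T;V)$ into $V$. Extracting a weakly convergent minimizing subsequence yields a minimizer $u_\lambda$ in the admissible class.

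Next I would derive the Euler-Lagrange equation. Since $\psi$ is G\^{a}teaux differentiable on $V$ and the Moreau-Yosida regularization satisfies $d_V\phi_\lambda=(\partial_V\phi)_\lambda$, the map $s\mapsto I_{\varepsilon,\lambda}(u_\lambda+sv)$ is differentiable at $s=0$ for every admissible perturbation $v$; the interchange of limit and integral is licensed by dominated convergence using the pointwise growth bounds on $d_V\psi$ and on $(\partial_V\phi)_\lambda$. Setting the derivative equal to zero yields
\begin{equation*}
\int_0^T \bigl\langle (\partial_V\phi)_\lambda(u_\lambda)+\varepsilon d_V\psi(u_\lambda)+\varepsilon F_V(u_\lambda)-(f+h),\, v\bigr\rangle_V\, dt + \varepsilon\int_0^T \langle d_V\psi(u'_\lambda), v'\rangle_V\, dt = 0
\end{equation*}
for every periodic $v$. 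Testing first with $v\in C^{\infty}_{c}((0,T);V)$ identifies $\varepsilon(d_V\psi(u'_\lambda))'$ with an element of $L^{(\bar p)'}(0,T;V^{\ast})$ (the non-derivative integrand lies there by the growth bounds), so $d_V\psi(u'_\lambda)\in W^{1,(\bar p)'}(0,T;V^{\ast})$ and the PDE in (AP)$^{h}_{\lambda}$ holds a.e.\ in $V^{\ast}$. Then applying Proposition \ref{Pro2.4} with a general periodic $v$ and using the PDE to cancel the interior contributions, the boundary residual collapses to $\varepsilon\langle d_V\psi(u'_\lambda(T))-d_V\psi(u'_\lambda(0)),\,v(0)\rangle_V=0$; arbitrariness of $v(0)\in V$ delivers the natural periodic condition on $d_V\psi(u'_\lambda)$.

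Uniqueness follows from strict convexity of $I_{\varepsilon,\lambda}$: since $V$ is uniformly convex and hence strictly convex, $\tfrac{\varepsilon}{2}|\cdot|_V^{2}$ is strictly convex while every other integrand is convex, so the minimizer is unique, and any solution of (AP)$^{h}_{\lambda}$ within the class \eqref{Reg3-1} is a critical point of the convex functional $I_{\varepsilon,\lambda}$ and must therefore coincide with $u_\lambda$. The main obstacle is the Euler-Lagrange step: one must bootstrap the regularity of $d_V\psi(u'_\lambda)$ from a mere distributional identity up to a bona fide derivative in $L^{(\bar p)'}(0,T;V^{\ast})$ so that the pointwise endpoint traces become meaningful, and only then can the natural periodic boundary condition be read off via Proposition \ref{Pro2.4} applied to periodic test functions.
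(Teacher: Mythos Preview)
Your approach is correct and essentially matches the paper's. Both arguments use the direct method to obtain the minimizer, then identify the Euler--Lagrange system by first testing with $C^\infty_c((0,T);V)$ to recover the equation and the regularity $(d_V\psi(u'_\lambda))' \in L^{(\bar p)'}(0,T;V^\ast)$, and finally testing with general periodic directions and integrating by parts to extract the natural boundary condition $d_V\psi(u'_\lambda(0))=d_V\psi(u'_\lambda(T))$; uniqueness in both cases rests on strict convexity coming from the $\tfrac{\varepsilon}{2}|\cdot|_V^2$ term.

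The only presentational difference is that the paper packages the derivation via subdifferential calculus: it splits $I_{\varepsilon,\lambda}=\sum_{i=1}^5 I^i$, computes $\partial_\Gamma I^i$ for each piece, and in particular identifies $\partial_\Gamma I^1$ (the term carrying $\psi(u')$ together with the periodic constraint) with the operator $u\mapsto -\varepsilon(d_V\psi(u'))'$ on the domain where $d_V\psi(u')$ is periodic. To do this it passes to the smaller space $\Lambda=W^{1,\bar p}(0,T;V)$, writes $I^1|_\Lambda=L+K$ with $K$ the indicator of the periodic constraint, applies the sum rule there (valid since $D(d_\Lambda L)=\Lambda$), and then pulls the information back to $\Gamma$. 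Your direct G\^{a}teaux differentiation over periodic test functions achieves exactly the same thing without the subdifferential bookkeeping. The paper's formulation has the advantage of giving the full operator identity $\partial_\Gamma I^1=A$ (both inclusions), which makes the equivalence ``solution of (AP)$^h_\lambda$ $\Leftrightarrow$ critical point of $I_{\varepsilon,\lambda}$'' immediate and hence yields uniqueness of solutions, not just of minimizers; in your write-up you should make explicit that a solution in the class \eqref{Reg3-1} is, after one integration by parts, a critical point of the convex functional, so that your strict-convexity argument closes.
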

\begin{proof}
It is easy to see that $I _{\varepsilon , \lambda }$ is proper lower semi-continuous and strictly convex 
on $\Gamma$.
By assumptions \eqref{A1} and \eqref{A3},
$I _{\varepsilon , \lambda }$ is coercive and bounded from below.
Then the standard argument guarantees the existence of global minimizer 
 $u_\lambda$  of $I _{\varepsilon , \lambda }$.

 Divide $I _{\varepsilon , \lambda }$ 
   into  the following five parts:
\begin{align*}
 I^1_{\varepsilon, \lambda } (u)
   &:=
	\begin{cases}
    	~ \DS \varepsilon  \int_{0}^{T} \psi (u' (t)) dt ~~
    	    & \text{ if } u \in 
    	        D(I^1_{\varepsilon, \lambda}) := \{  u \in W^{1, \bar{p} }(0,T ; V) ; u(0) =u(T) \}, 
    \\
	    ~ + \infty ~~ 
	        & \text{ otherwise},
	\end{cases}
\\[2mm]
 I^2_{\varepsilon, \lambda } (u)
  & := \int_{0}^{T} \phi_{\lambda }  (u (t)) dt, 
 \\[2mm]
 I^3_{\varepsilon, \lambda } (u)
   &:=
	\begin{cases}
    	~ \DS \varepsilon  \int_{0}^{T} \psi   (u (t)) dt ~~
    	    & \text{ if } \  \psi(u (\cdot )) \in L^1(0,T), 
  \\
    	~ + \infty ~~
    	    & \text{ otherwise},
	\end{cases}
\end{align*}
\begin{align*}
 I^4_{\varepsilon, \lambda } (u)
   &:= \frac{\varepsilon }{2}  \int_{0}^{T} |u (t) |^2_V dt, 
\\[2mm]
 I^5 _{\varepsilon ,\lambda } (u)
   &:=  \int_{0}^{T} \LA f (t) + h (t) , u(t) \RA _V dt. 
\end{align*}  
We first show that $\partial_{\Gamma} I ^1_{\varepsilon, \lambda }$ coincides with the operator
 $A :\Gamma \to \Gamma^{\ast }$ defined by  
\begin{equation*}
A u (t) := - \LC \varepsilon d _V \psi (u ' (t ) ) \RC ' 
\end{equation*}
with domain
\begin{equation*}
D(A) := \LD  u \in D(I^1_{\varepsilon, \lambda } ) ;~~
	d_V \psi (u' (\cdot )) \in  W^{1, (\bar{p})' }   (0,T ; V^{\ast } ),~~d _V \psi (u' (0 ))
	   = d _V \psi (u' (T ))\RD. 
\end{equation*}
Since $d_V \psi = \partial_V \psi$,
 we have for every $u \in D(A)$ and $v \in D(I^1_{\varepsilon, \lambda })$
\begin{align*} 
 I^1_{\varepsilon, \lambda } (v) - I^1_{\varepsilon, \lambda } (u) 
  & \geq  \varepsilon \int_{0}^{T} \LA d_V \psi (u ' (t) ), v ' (t) - u'(t) \RA _V dt
\\ 
  & = - \varepsilon \int_{0}^{T} \LA  ( d_V \psi (u ' (t) ) )' , v  (t) - u(t) \RA _V dt
\\ 
   & = \LA A u , v - u \RA_{\Gamma}, 
\end{align*}
which implies $A \subset \partial_{\Gamma} I^1 _{\varepsilon, \lambda }$.
 To prove the inverse inclusion,
   we set $\Lambda :=  W^{1,\bar{p}} (0,T ; V)$ and define two functionals 
    $L ,K : \Lambda \to [0, + \infty ]$ by 
\begin{equation}
L(u) := \varepsilon  \int_{0}^{T} \psi (u ' (t) ) dt,~~
K (u):=
\begin{cases}
~~0 ~&~\text{ if }u(0 ) =u(T),\\
+\infty ~&~\text{ otherwise}.
\end{cases}
\label{label-001} 
\end{equation}
Remark that the restriction of $I ^1 _{\varepsilon , \lambda } $ onto $\Lambda $,
denoted by $I ^1 _{\Lambda  } $ henceforth,
coincides with the sum of $L$ and $K$.
Fix $h \in \R $ and $ u , e \in \Lambda $ arbitrarily.
By the definition of subdifferential, 
\begin{equation*}
h \LA d_V \psi (u ' (t) ) , e ' (t)  \RA _V
\leq
\psi (u ' (t) + h e ' (t)  )  - \psi (u ' (t) )  
\leq 
h \LA d_V \psi (u ' (t) + h e '(t )) , e ' (t)  \RA _V 
\end{equation*}
holds for a.e. $t \in (0, T)$.
Thanks to \eqref{A2}, we can apply Lebesgue's dominated convergence theorem
 and we obtain 
 \begin{align*}
\frac{L(u + h e) -L(u)  }{h }
	& = \varepsilon \int_{0}^{T} \frac{\psi (u'(t) + h e'(t)) -\psi (u'(t))}{h} dt    \\
	& \to \int_{0}^{T} \LA \varepsilon d _V \psi (u'(t)) , e ' (t) \RA _V dt  
	    \ =  \LA  d_\Lambda L(u), e  \RA _\Lambda
	       \quad \text{as} \ \ h \to 0. 
\end{align*}
Hence the functional $L$ is G\^{a}teaux differentiable over $\Lambda $.
On the other hand,
since  $K $ is a proper indicator  function of the closed linear subset $\{ u \in \Lambda ; u(0)=u(T) \} $ of  $\Lambda $,
the subdifferential  of $K$ is well defined and 
$\LA v , e \RA _\Lambda = 0 $ holds for every $\LB u , v  \RB \in \partial _{\Lambda }K $ and 
$e \in \Lambda $ s.t. $e (0) = e(T)$. 
Since $D(d_\Lambda L) = \Lambda $ and $D(\partial _{\Lambda }K) =D(K)$, 
 we can derive
$\partial_{\Lambda } I^1 _{\Lambda } = \partial  _{\Lambda } (L +K) = d _{\Lambda } L + \partial  _{\Lambda } K $
and 
\begin{align*}
 D(\partial_\Lambda  I^1_{\Lambda }) 
	&= D(d _{\Lambda } L) \cap D(\partial  _{\Lambda} K ) \\
	&= \{ u \in \Lambda ;~  u (0) = u(T)\}
\end{align*}
(see, e.g., Theorem 2.10 in Barbu \cite{Bar2}).
Let $\LB u , v  \RB \in \partial _{\Gamma  } I^1_{\varepsilon , \lambda  }  $ and $e \in \Lambda $ s.t. $e (0) = e (T)$.
By the general relationship
$\partial _{\Gamma  } I^1_{\varepsilon , \lambda  } \subset  \partial _{\Lambda   } I^1_{\Lambda }$,
we have
\begin{equation*}
\LA  v  , e \RA _{\Lambda } =  \LA  d _\Lambda L (u)   , e \RA _{\Lambda }
= \int_{0}^{T}   \LA \varepsilon d _V \psi (u '(t) ) , e '(t) \RA _V dt.
\end{equation*}
In addition,
since 
$\DS \LA  v  , e \RA _{\Lambda }  = \DS \LA  v  , e \RA _{\Gamma } = \int_{0}^{T} \LA v(t) , e(t) \RA _V dt $
is
verified by $v \in \Gamma ^{\ast } \hookrightarrow \Lambda ^{\ast }$  and $e \in \Lambda \hookrightarrow \Gamma $,
\begin{equation*}
 \int_{0}^{T} \LA v(t) , e(t) \RA _V dt
= \int_{0}^{T}   \LA \varepsilon d _V \psi (u '(t) ) , e '(t) \RA _V dt
\end{equation*}
holds for every  $e \in \Lambda $ with $e (0) = e (T)$.   
 Hence by testing this by $e \in C_0^\infty((0,T);V)$, which is dense in $\Gamma$, 
	 we conclude that 
        $v= - ( \varepsilon d_V \psi (u'(\cdot )) )'  
          \in \Gamma^{\ast} =  L^{(\bar{p})'} (0,T ; V^{\ast })$.
Combining this with the integration by parts,
we obtain 
\begin{equation*}
\LA  d _V \psi (u '(T) ) , e (T)  \RA _V - \LA  d _V \psi (u '(0) ) , e (0)  \RA _V 
=
\LA  d _V \psi (u '(T) )  -  d _V \psi (u '(0) )  , e (0)  \RA _V 
=0 . 
\end{equation*}
Therefore $ d _V \psi (u '(T) )  =  d _V \psi (u '(0) ) $ in $V ^{\ast }$ by the arbitrariness of $e(0)$
and then $ \partial _{\Gamma }I^1 _{\varepsilon , \lambda } \subset A $ is shown.

We now check the remainders $ I^i_{\varepsilon, \lambda}(\cdot) \ (i=2,3,4,5)$. 
Obviously, these are proper lower semi-continuous convex functional defined on $\Gamma $.
We first get
\begin{equation*}
I^2 _{\varepsilon  , \lambda } (u)
 \leq \int_{0}^{T} \LC \frac{| u(t) - v | ^2 _{V}}{2\lambda } + \phi (v)  \RC dt 
 ~~\forall u \in  \Gamma \text{ and  }
 \forall  v \in D(\phi )
\end{equation*}
from the definition of the Moreau--Yosida regularization
and the definition of $\partial \psi$ and \eqref{A2} yield
\begin{align*}
I^3 _{\varepsilon  , \lambda } (u)
&  \leq \varepsilon T \psi (v)
+\varepsilon 
\int_{0}^{T} 
|d _V \psi (u (t)) |_{V^{\ast }} |u (t ) - v | _V dt \\
& \leq \varepsilon T \psi (v)
 +\varepsilon C \LC |u(t)| ^p _V + |v | ^p _V +1  \RC 
 ~~\forall u \in \Gamma \text{ and  }
 \forall v \in D(\psi ),
\end{align*}
where $C$ is some suitable constant independent of $\varepsilon $ and $\lambda $.
Hence 
$D(I^3 _{\varepsilon , \lambda })  = \Gamma $ and $I^4_{\varepsilon, \lambda}, \  
  I^4_{\varepsilon, \lambda}$ are well defined on $\Gamma$. 
   Moreover,  by the standard argument (see Appendice I in \cite{PU}), 
   we have
   \begin{align*}
        & \partial _{\Gamma}  I^2_{\varepsilon, \lambda }(u) 
            = \partial_V \phi_{\lambda } (u (t)),\hspace{5mm} 
         \partial_{\Gamma}  I^3_{\varepsilon, \lambda}(u) 
            = \varepsilon d_V  \psi  (u (t)), 
    \\
        & \partial_{\Gamma}  I^4_{\varepsilon , \lambda }(u) 
            = \varepsilon F_ V (u (t)), \hspace{5mm} 
         \partial_{\Gamma} I^5_{\varepsilon, \lambda }(u) 
            = - f(t) - h(t),
   \end{align*} 
 for any $u \in \Gamma $ and a.e. $t \in (0,T)$.
 Hence we immediately have
\begin{align*}
\partial _{\Gamma} I_{\varepsilon , \lambda } 
	 &=\partial _{\Gamma}  ( I ^1 _{\varepsilon , \lambda } +I ^2 _{\varepsilon , \lambda } 
					+I ^3 _{\varepsilon , \lambda } +I ^4 _{\varepsilon , \lambda }  +I ^5 _{\varepsilon , \lambda } ) \\
	&= \partial  _{\Gamma}  I ^1 _{\varepsilon , \lambda } +\partial _{\Gamma}  I ^2 _{\varepsilon , \lambda } 
					+\partial _{\Gamma}  I ^3 _{\varepsilon , \lambda } +\partial _{\Gamma}  I ^4 _{\varepsilon , \lambda }  
			+\partial _{\Gamma}  I ^5 _{\varepsilon , \lambda } .
\end{align*}
Therefore, since the global minimizer $u _\lambda $ satisfies 
$0 = \partial _{\Gamma} I _{ \varepsilon , \lambda  } (u_{\lambda })$,
we  conclude that $u _\lambda $ is the unique solution to (AP)$^{h} _{\lambda }$.
\end{proof}

\begin{remark}
If  one tries to apply the WED approach to $I^1 _{\varepsilon , \lambda }$ by the same argument as 
that in \cite{AS_Cau}, 
 one gets a solution with condition
 $d_ V \psi (u ' _{\lambda }(0)) = e ^{- T / \varepsilon } d_ V \psi  ( u '_{\lambda }( T ))$.
Taking the limit as $\varepsilon ,\lambda  \to +0 $,
 one is forced to face with the 
 over-determined problem with $d_ V \psi (u ' (0)) =  d_ V \psi  ( u '( T )) = 0$.
\end{remark}

We next take the limit  $\lambda \to 0$ in (AP)$^h_{\lambda }$.
 In this procedure, we note that if we multiply  
   the equation of (AP)$^h _{\lambda }$ by $u '_{\lambda }$  as in \cite{AS_Cau},
     we get for any $t_1 $ and $t_2$,
\begin{equation*}
\LB \varepsilon  \psi ^{\ast } (d_V \psi (u' (t))) 
+ \varepsilon \psi (u _{\lambda } (t))
 +\phi _{\lambda } (u_{\lambda } (t))
+ \frac{\varepsilon }{2} |u_{\lambda } (t)| ^2 _{V}   \RB ^{t =t _ 2} _{t =t _1}
 \leq \int_{t_1}^{t_2} \LA f(t)+ h(t) ,u' _{\lambda }(t) \RA _V dt  ,
\end{equation*}
where $\psi ^{\ast }$ is Legendre--Fenchel transform of $\psi $
(precise treatment of the first term of L.H.S. will be given in \eqref{St3-02} and \eqref{LF} below).
However, this inequality  seems to be not useful for the periodic problem,  
since $u _{\lambda } (0) $ is still  an unknown value. 
 So we here adopt a different manner to prove Lemma \ref{lemma3.1}.
 \begin{proof}[\sc Proof of Lemma \ref{lemma3.1}] 
 	 Let $ h, \ f \in L^{p'}(0,T;V^\ast)$. Since 
 		$ L^{p'}(0,T;V^\ast) \subset L^{(\bar{p})'}(0,T;V^\ast)$, 
 		  we note that Lemma \ref{LEM3-1} assures the existence of the unique solution $u_{\lambda }$ 
 		    of (AP)$^h_{\lambda }$ satisfying $u_{\lambda} \in W^{1,\bar{p}}(0,T;V) \subset W^{1,p}(0,T;V)$. 
 We first multiply
 the equation of (AP)$^h_{\lambda }$ by $u_{\lambda }$. 
Then using the integration by parts 
and the periodicity of $u_{\lambda }$, we have 
\begin{equation}
\label{St1-01} 
\begin{split}
 & \varepsilon \int_{0}^{T} \LA d_V \psi (u' _{\lambda } (t))  , u '_{\lambda } (t) \RA _V dt
	 +\varepsilon \int_{0}^{T} \LA d_V \psi (u _{\lambda } (t)) , u_{\lambda } (t) \RA _V dt \\
		&+  \int_{0}^{T} \LA \partial _V \phi _{\lambda } (u _{\lambda } (t)) , u_{\lambda } (t) \RA _V dt
		+  \varepsilon \int_{0}^{T} |  u_{\lambda } (t) |^2 _V dt
		=   \int_{0}^{T} \LA  f(t) + h(t) , u_{\lambda } (t) \RA _V dt .
\end{split}
\end{equation}
 The definition of subdifferential and \eqref{A1} yield
\begin{equation}\label{est:below:psi:v}
\int_{0}^{T}  \LA d _V \psi (v(t)) , v  (t) \RA _V dt
   \geq \int_{0}^{T} \LC \psi (v (t) ) - \psi (0) \RC dt
      \geq c_1 \LC |v | ^p _{L^p (0,T ; V )} - T \RC, 
\end{equation}
and hence by \eqref{est:below:psi:v} with $v=u'_\lambda$ and $v=u_\lambda$, we get 
 \begin{equation}
\label{St1-02} 
\begin{split}
 & \varepsilon c _1 \int_{0}^{T} \LC |u'_{ \lambda} (t)| ^p _V +  |u_{ \lambda} (t)| ^p _V  \RC dt 
		+  \int_{0}^{T} \LA \partial _V \phi _{\lambda } (u _{\lambda } (t)) , u_{\lambda } (t) \RA _V dt 
		+ \varepsilon \int_{0}^{T} |  u_{\lambda } (t) |^2 _V dt \\
		&\leq 
		\int_{0}^{T} \LA  f(t) + h(t) , u_{\lambda } (t) \RA _V dt 
		+ 2  \varepsilon  C_1 T.
\end{split}
\end{equation}
Here $c_1 $ and $C _1$ denote positive general constants independent of the parameters $\lambda $ and $ \varepsilon $. 
 From this and Young's inequality, we can derive
 \begin{equation}
\label{St1-020} 
\begin{split}
 & \varepsilon c _1 \int_{0}^{T} \LC |u'_{ \lambda} (t)| ^p _V +  |u_{ \lambda} (t)| ^p _V  \RC dt 
		+  \int_{0}^{T} \LA \partial _V \phi _{\lambda } (u _{\lambda } (t)) , u_{\lambda } (t) \RA _V dt 
		+ \varepsilon \int_{0}^{T} |  u_{\lambda } (t) |^2 _V dt \\
		&
		\leq C_{\varepsilon } \int_{0}^{T} |   f(t) + h(t) |^{p'} _{V ^{\ast }} dt 
		+ 2  \varepsilon  C_1 T ,
\end{split}
\end{equation}
where $C_{ \varepsilon } $ is a general constant depending only on $\varepsilon $.
Then we obtain the following estimate independent of $\lambda $:
\begin{equation}\label{est:energy:final}
  \int_{0}^{T} \LA \partial_V \phi_{\lambda } (u_{\lambda } (t)), u_{\lambda } (t) \RA_V dt 
    + \varepsilon \int_{0}^{T} \LC |u'_{\lambda } (t)|^p_V +  |u_{\lambda } (t)|^p_V
       +  |  u_{\lambda } (t) |^2_V \RC dt
         \leq C_{\varepsilon } .
\end{equation}
  Hence for any fixed $v \in  D(\phi) \subset X $,
	 the definition of subdifferential and the fact that 
        $\phi _{\lambda } (v) \leq \phi (v)$ yield 
\begin{align*}
0 &\leq \int _{0}^{T} \phi _{\lambda } (u _{\lambda } (t)) dt
\leq \int _{0}^{T} \LA  \partial _V \phi _{\lambda } (u _{\lambda } (t)) , u_{\lambda } (t) -v \RA _V dt 
	 + T \phi (v)  \\
&\leq C_{\varepsilon } +|v |_X 
 \int _{0}^{T} |  \partial _V \phi _{\lambda } (u _{\lambda } (t)) | _{X^{\ast}}  dt .
\end{align*}
Then since 
$\partial  _V \phi ( J _{\lambda } u _{\lambda }) 
\subset \partial _X \phi _X (J_{\lambda } u_{\lambda })$ 
and 
$\phi (J _{\lambda } u_{\lambda }) \leq  \phi _{\lambda }  (u_{\lambda }) \leq \phi (u_{\lambda })$
 (recall  \eqref{Yosida}),
we have from \eqref{A3} and \eqref{A4} 
\begin{align*}
 \int _{0}^{T} \phi _{\lambda } (u _{\lambda } (t)) dt
&\leq C_{\varepsilon } +C |v |_X 
 \int _{0}^{T} ( \phi (J_{\lambda } u _{\lambda } ) +1 ) ^{1/ m'} dt \\
 &\leq C_{\varepsilon } +C T ^{1/m} |v |_X 
 \LC \int _{0}^{T} ( \phi _{\lambda } ( u _{\lambda } ) +1 ) dt \RC  ^{1/ m'} ,
\end{align*}
whence follows
\begin{equation}
\label{St1-03} 
0 \leq \int _{0}^{T} \phi _{\lambda } (u _{\lambda } (t)) dt
 \leq C_{\varepsilon }.
\end{equation}  
Hence, again by \eqref{Yosida}, \eqref{A3} and \eqref{A4}, we obtain  
\begin{equation*}
\int_{0}^{T} \LC
|J _{\lambda } u_ {\lambda } (t)| ^m _{X}
+
|\partial _V \phi _{\lambda } (u_ {\lambda } (t) )| ^{ m '} _{X^{\ast }}
 \RC dt 
 \leq C_{\varepsilon },
\end{equation*}
  which together with \eqref{est:energy:final} yields 
\begin{equation}
\label{St1-04} 
\begin{split}
| \partial _V \phi _{\lambda } (u _{\lambda })  | _{L ^{m '} (0, T ; X ^{\ast})}
&+
|u _{\lambda } | _{W ^{1 , p} (0, T ; V)} \\
&+
| u _{\lambda }  | _{L ^{2} (0, T ; V)} 
+
| J_{\lambda } u _{\lambda }  | _{L ^{m } (0, T ; X )}
 \leq C_{\varepsilon }.
\end{split}
\end{equation}
Therefore  \eqref{A2} leads to
\begin{equation}
\label{St1-05} 
\int_{0}^{T} | d_V  \psi (u_{\lambda } (t))| ^{p'} _{V^{\ast }} dt 
+
\int_{0}^{T} | d_V  \psi (u ' _{\lambda } (t))| ^{p'} _{V^{\ast }} dt 
\leq C_{\varepsilon }
\end{equation}
and  \eqref{A6} implies 
\begin{equation}
\label{St1-06} 
\int_{0}^{T} \psi (u_{\lambda } (t)) dt 
+
\int_{0}^{T} \psi (u ' _{\lambda } (t)) dt 
\leq C_{\varepsilon }.
\end{equation}
 Returning to (AP)$^h _{\lambda }$, we get
\begin{equation}\label{St1-07} 
   \begin{split}
      & |\varepsilon (d_V \psi ( u'_{\lambda } ))' |_{L^{p'} (0, T ; V^{\ast}) + L^{m'} (0, T ; X^{\ast})} 
   \\
      &  \hspace{5mm} \leq |\varepsilon  d_V \psi ( u_{\lambda} ) |_{L^{p'} (0, T ; V^{\ast}) }
                        + |\partial_V  \phi_{\lambda } ( u_{\lambda}) |_{ L^{m'} (0, T ; X^{\ast}) } 
   \\
      & \hspace{15mm} + | \varepsilon  F_V (u_{\lambda}) |_{L^{p'} (0, T ; V^{\ast})} 
                        + | f+ h |_{L^{p'} (0, T ; V^{\ast})} 
                          \leq  C_{\varepsilon}.
   \end{split}
\end{equation}
Here we used the fact that $W^{1,p}(0,T;V) \subset L^\infty(0,T;V)$ 
 	            and $| F_V(u_\lambda)|_{V^\ast} = |u_\lambda|_V$.

     Then by \eqref{St1-04}, \eqref{St1-05} and \eqref{St1-07}, we can extract 
 a subsequence  $\{ \lambda _n \} _{n \in \N }$
such that $\lambda _n \to 0 $ as $n \to \infty $  and 
\begin{equation}\label{conv:u:lambdan} 
   \begin{split} 
      u_{\lambda_n} \rightharpoonup \exists u ~~~~ & \text{ weakly in } W^{1,p}(0,T ; V ), 
 \\
      J_{\lambda_n} u_{\lambda_n} \rightharpoonup \exists v ~~~~ & \text{ weakly in } L^{m}(0,T ; X ), 
 \\
      F_V (u_{\lambda_n  }) \rightharpoonup \exists w ~~~ & \text{ weakly in } L^{p'}(0,T ; V^{\ast} ),
\\
      \partial_V \phi_{\lambda_n} (u_{\lambda_n}) \rightharpoonup \exists \eta
					                                  ~~~~ & \text{ weakly in } L^{m'} (0,T ; X^{\ast} ), 
\\
       d_V \psi  (u_{\lambda _n}) \rightharpoonup \exists a
	                                   				  ~~~~ & \text{ weakly in } L^{p'} (0,T ; V^{\ast} ), 
\\
       d_V \psi  (u'_{\lambda _n}) \rightharpoonup \exists \xi
					                                  ~~~~ & \text{ weakly in } L^{p'} (0,T ; V^{\ast} ), 
\\
       ( d_V \psi  (u'_{\lambda _n}) )' \rightharpoonup  \xi' 
					~~~ & \text{ weakly in } L^{p'} (0,T; V^{\ast} )+ L^{m'} (0,T ; X^{\ast} ),
   \end{split}
\end{equation}
where $\xi ' $ stands for the derivative of $\xi $ in the sense of distributions.
Taking the limit in (AP)$^h _{\lambda _{n}}$, we have
\begin{equation*}
  - \varepsilon \xi ' + \varepsilon  a + \eta +\varepsilon w = f + h 
     \quad \text{ in } \quad  L^{p'} (0,T ; V ^{\ast} )+ L^{m'} (0,T ; X ^{\ast} ).
\end{equation*}
 Now we are going to show that $u$ gives a solution of (AP)$^h$. 
  	We first note that $\{ u_ \lambda \} _{\lambda >0}$ forms an equi-continuous family in $C ([0, T ] ; V )$, 
  	  which is assured by 
          $| u' _{\lambda } | _{L^{p} (0, T ; V ) } \leq C_{\varepsilon } $.
  Moreover,  since $u_{\lambda } \in C([0,T];V)$, there exists 
    $t_0 \in [0,T)$ such that
\begin{equation*}
 T^{1/p} |u_\lambda (t_0)|_V = 
    \ T ^{1/p} \min _{t \in [0,T]} | u _{\lambda }  (t ) | _ V 
       \leq 
          | u _{\lambda } | _{L^{p} (0, T ; V ) } \leq C_{\varepsilon }.
\end{equation*}
 Then for any $t \in [t_0, t_0 + T],$ we get 
\begin{equation*}
  |u_\lambda(t)|_V \leq |u_\lambda(t_0)|_V + \int_{t_0}^t |u_\lambda' ( s)|_V ~\! ds 
                     \leq |u_\lambda(t_0)|_V + T^{1/p'}  |u_\lambda'|_{L^p(0,T;V)}
                       \leq C_\varepsilon,
\end{equation*} 	
 which implies 	
\begin{equation}
\label{St1-08} 
\sup _{t \in [0,T]} | u _{\lambda }  (t ) | _ V \leq C_{\varepsilon } .
\end{equation}
Thanks to the general property $ | J_{\lambda } u _{\lambda }| _V \leq C_1 (|u _{\lambda }|_V +1) $,
we also get 
\begin{equation}
\label{St1-09} 
\sup _{t \in [0,T]} | J_{\lambda } u _{\lambda }  (t ) | _ V \leq C_{\varepsilon } .
\end{equation}
We here recall that $J _{\lambda } u_\lambda$ is defined by 
 the unique solution of $ F _V (J _{\lambda } u _{\lambda } - u_{\lambda } ) \in - \lambda 
\partial _V \phi (J _{\lambda }  u_{\lambda })$.
Then from the monotonicity of $\partial _V \phi $, 
\begin{equation*}
\LA F _V (J _{\lambda } u _{\lambda } (t + h ) - u_{\lambda } (t+h ) )
- F _V (J _{\lambda } u _{\lambda } (t  ) - u_{\lambda } (t ) ),
J _{\lambda }  u _{\lambda } (t + h ) - J _{\lambda }  u_{\lambda } (t ) 
  \RA _V \leq 0
\end{equation*}
holds for any $h \in \R$.
On the other hand, \eqref{St1-08} and \eqref{St1-09} yield 
\begin{align*}
&\LA F _V (J _{\lambda } u _{\lambda } (t + h ) - u_{\lambda } (t+h ) )
- F _V (J _{\lambda } u _{\lambda } (t  ) - u_{\lambda } (t ) ),
 -u _{\lambda }(t+h )  +u_{\lambda } (t ) \RA _V \\
&\leq 
C_{\varepsilon } |u_{\lambda } (t+h )  -u_{\lambda } (t ) | _V .
\end{align*}
Then by adding two inequalities above, we obtain by Proposition \ref{Pro2.1} 
\begin{align*}
   &  m_{C_{\varepsilon }} ( |\LC J_{\lambda} u_{\lambda}(t+h) - u_{\lambda}(t+h) \RC
		 - \LC J_{\lambda} u_{\lambda}(t) - u_{\lambda}(t) \RC |_V ) 
\\
   & ~~~~~~~~\times |\LC J_{\lambda} u_{\lambda} (t+h) - u_{\lambda}(t+h) \RC
	     -\LC J_{\lambda} u_{\lambda}(t) - u_{\lambda}(t) \RC |_V  \ 
	         \leq C_{\varepsilon} |u_\lambda(t+h) - u_\lambda(t) |_V,
\end{align*}
which implies  that $ \{ J _{\lambda } u _{\lambda } - u_{\lambda }  \} _{\lambda > 0 }$
  forms  an equi-continuous family in $C([0,T]; V)$. 
   Hence so does $ \{ J _{\lambda } u _{\lambda }  \} _{\lambda > 0 }$.
Therefore 
Theorem 3 of Simon \cite{Simon} assures the relative compactness of
$ \{ J _{\lambda } u _{\lambda }  \} _{\lambda > 0 }$ in  $C([0,T]; V)$,
i.e., 
\begin{equation*}
J _{\lambda _n } u _{\lambda _n} \to
v ~~~\text{ strongly in } C ([0,T] ; V )
\end{equation*}
(recall that $v$ is the weak limit $\{ J _{\lambda _n } u _{\lambda _n}  \} _{n \in \N } $
in $L^{m} (0,T; X)$).
Since $J _{\lambda } u _{\lambda } (0) = J _{\lambda } u _{\lambda } (T)$,
 the limit $v$ is also time-periodic.
   Furthermore,  by the definition of $\phi _{\lambda }$ and 
      the fact that $\phi \geq 0$, we have
\begin{equation*}
\int_{0}^{T} | u_{\lambda } (t) - J_{\lambda } u_{\lambda } (t) | ^2_V dt 
  \leq 2 \lambda \int_{0}^{T} \phi _{\lambda } (u _{\lambda } (t)) dt 
   \leq 
     2 \lambda C_{\varepsilon }, 
\end{equation*}
  which implies that $\{ u_{\lambda } - J _{\lambda } u_{\lambda } \} _{\lambda >0 }$
   converges to $0$ strongly  in $L^2 (0,T ;V)$ and then $u = v$.
Using  the strong convergence of $\{  J _{\lambda _n } u_{\lambda _n } \} _{n \in \N }$
and uniform boundedness \eqref{St1-08} and \eqref{St1-09},
we  also have 
\begin{equation}\label{conv:u:lambdan:Lr}
  u _{\lambda _n } \to u 
      ~~~\text{ strongly in } L^r (0,T ; V )~~\forall r \in [1, \infty )
\end{equation}
 and then by the demiclosedness of $F_V$ and  $d_V \psi$, we get 
\begin{equation}\label{identity:F:dVpsi} 
  w = F _V (u), \quad a = d _V \psi (u).
\end{equation}
Here we can extract a subsequence of $\{  u_{\lambda _n } \} _{n \in \N }$
(we omit relabeling) such that
\begin{equation}
\label{St1-10}
u _{\lambda _n } (t) \to u(t) 
 ~~~\text{ strongly in }  V ~~\text{ for a.e. } t \in (0,T ).
\end{equation}
Since 
 \eqref{St1-05}, \eqref{St1-07},  and compact embedding $V ^{\ast } \hookrightarrow X ^{\ast } $ holds,
 Theorem 3 of \cite{Simon} is also applicable to the sequence $\{ d _V \psi (u '_{\lambda _n }) \} _{n \in \N }$.
Readily, 
\begin{equation}
\label{St1-11} 
 d _V \psi (u '_{\lambda _n }) \to \xi 
 ~~~\text{ strongly in } C ([0,T] ; X ^{\ast } ) .
\end{equation}
Remark that $\xi (0 ) = \xi (T)$ also can be deduced
 from the periodicity of $d _V \psi (u '_{\lambda _n })$.

In order to complete this step,
we have to check $\xi = d_V \psi ( u' )$ and $\eta \in \partial _ X \phi _X ( u)$.
We here define  a subset $\Upsilon $ of $(0,T)$ by 
\begin{equation*}
\Upsilon 
:=
\LD 
\begin{array}{l|l}
~~ &
 t \text{ is a Lebesgue point of } t \mapsto \LA \xi (t) ,  u (t) \RA _V \text{ and }  \\
 t \in (0, T ) &
  \{ \lambda _n \} \text{ has a subsequence } 
	\{ \lambda ^t _{n'} \} \text{ tending to } 0 \text{ as } n' \to \infty  \\
~~ &
 \text{s.t. } \LA d _V \psi (u'_{\lambda ^t _{n'}} (t) ),  u _{\lambda ^t _{n'}}(t) \RA _V
			\to \LA \xi (t) ,  u (t) \RA _V \text{ as } n' \to \infty.
\end{array}
\RD .
\end{equation*}
Since $\LA \xi (\cdot ) , u (\cdot )  \RA _V \in L ^1 (0,T)$,
 almost every point of $(0 , T)$ satisfy the first requirement.
Moreover, thanks to Fatou's lemma and \eqref{St1-05}, 
$t \mapsto \liminf _{n\to \infty} |d _V \psi (u _{\lambda _n } (t))| ^{p'} _{V^{\ast}}$
belongs to $L^1 (0, T)$, and then takes a finite value at a.e. $t \in (0,T )$.
Hence from \eqref{St1-10} and \eqref{St1-11},
 the second requirement is also assured by a.e. $t \in (0,T )$.
 Therefore $\Upsilon $ has full Lebesgue measure in $(0,T)$.

Fix $t _1 , t_2 \in \Upsilon $ with $t _  1 < t _2 $ arbitrarily.
Since $\lambda \partial _V \phi _{\lambda } ( u _{\lambda  } )
			= F _V ( u_{\lambda }  - J _{\lambda } u_{\lambda } ) $
			holds by the definition of the Yosida approximation,
			we have for every $\lambda > 0$
\begin{equation}
\begin{split}
\label{St1-12} 
&\int_{t_ 1}^{ t_2 } \LA \partial _V \phi _{\lambda } (u_{\lambda } (t))  , J_{\lambda } u_{\lambda } (t) \RA _ X dt  
 =\int_{t_ 1}^{ t_2 } \LA \partial _V \phi _{\lambda } (u_{\lambda } (t))  , J_{\lambda } u_{\lambda } (t) \RA _ V dt \\  
 &=\int_{t_ 1}^{ t_2 } \LA \partial _V \phi _{\lambda } (u_{\lambda } (t))  , u_{\lambda } (t) \RA _ V dt   
 - \lambda ^{-1 }\int_{t_ 1}^{ t_2 } |  u_{\lambda }  - J _{\lambda } u_{\lambda } | ^2_V dt \\  
 &\leq \int_{t_ 1}^{ t_2 } \LA \partial _V \phi _{\lambda } (u_{\lambda } (t))  , u_{\lambda } (t) \RA _ V dt .  
\end{split}
\end{equation}
Repeating the same calculation as that for \eqref{St1-01},
i.e., multiplying the equation of (AP)$^h _{\lambda }$ by $u _{\lambda }$
and integrating over $(t_1 , t_2)$,
we get
\begin{equation}
\label{St1-13} 
\begin{split}
 & \varepsilon \int_{t _1}^{t _2 } \LA d_V \psi (u' _{\lambda } (t))  , u '_{\lambda } (t) \RA _V dt
 -\varepsilon \LA d_V \psi (u' _{\lambda } (t _2 ))  , u _{\lambda } (t _2) \RA _V
	 +\varepsilon \LA d_V \psi (u' _{\lambda } (t_1))  , u _{\lambda } (t_1) \RA _V \\
	& +\varepsilon \int_{t _1}^{t _2 } \LA d_V \psi (u _{\lambda } (t)) , u_{\lambda } (t) \RA _V dt 
		+  \int_{t_1}^{t_2}  \LA \partial _V \phi _{\lambda } (u _{\lambda } (t)) , u_{\lambda } (t) \RA _V dt \\
		&\hspace{10mm} +  \varepsilon \int_{t _1}^{t _2 }
				\LA F_V (u _{\lambda } (t)) , u_{\lambda } (t) \RA _V  dt
		=   \int_{t _1}^{t _2 }  \LA  f(t) + h(t) , u_{\lambda } (t) \RA _V dt .
\end{split}
\end{equation}
 Monotonicity of $d _V \psi $ 
    in  $L^p (t _1 , t _2 ; V)$ yields
\begin{equation}
\label{St1-14}
  \liminf_{n\to \infty } \int_{t_1}^{t_2} \LA d _V \psi ( u '_{\lambda _n } (t) ) , u '_{\lambda _n } (t) \RA _V dt \geq 
 \int_{t_1}^{t_2} \LA   \xi (t) ,  u'  (t)  \RA _V dt.
\end{equation} 
 Furthermore, in view of \eqref{conv:u:lambdan}, \eqref{conv:u:lambdan:Lr} 
   and \eqref{identity:F:dVpsi}, 
  we get
\begin{equation}\label{conv:psi:F:ulambdan}
\begin{split}
& \lim_{n\to \infty } \int_{t_1}^{t_2} \LA d _V \psi ( u _{\lambda _n } (t) ) 
     + F_V(u_{\lambda _n}), u _{\lambda _n } (t) \RA _V dt \\
     &\hspace{15mm} 
       = \int_{t_1}^{t_2} \LA   d _V \psi (u(t)) + F_V(u (t) ) , u (t)  \RA _V dt.
\end{split}
\end{equation}
  Thus choosing a suitable subsequence of $\{ \lambda _n\}$ in \eqref{St1-14},
 	denoted by $\{ \lambda _k \}$, and taking the limit in \eqref{St1-13} with 
	$ \lambda = \lambda_k \to 0 \ \text{as} \ k \to \infty $, 
we obtain by \eqref{St1-12}, \eqref{St1-13}, \eqref{St1-14} and \eqref{conv:psi:F:ulambdan} 
\begin{equation}
\label{St1-15} 
\begin{split}
 &\limsup_{k \to \infty }  
\int_{t _1}^{t _2} 
 \LA \partial _V \phi _{\lambda _k } (u _{\lambda _k } (t)) , J _{\lambda 
_k}u_{\lambda _k } (t) \RA _X dt\\
& \leq
- \varepsilon \int_{t _1}^{t _2 } \LA \xi (t)  , u ' (t) \RA _V dt
 +\varepsilon \LA  \xi  (t _2 )  , u  (t _2) \RA _V
	 - \varepsilon \LA \xi (t_1)  , u (t_1) \RA _V \\
	& \hspace{10mm} - \int_{t _1}^{t _2 } \LA \varepsilon  d _V \psi (u(t))
			+ \varepsilon F_V (u  (t)) - f(t) - h(t) , u (t) \RA _V dt .
\end{split}
\end{equation}
From the fact that
$ u = v \in W^{1,p } (0,T ;V) \cap L ^m (0,T ; X )$,
$\xi \in L^{p'} (0,T ;  V^\ast) $, and $\xi ' \in  L^{p' } (0,T ;V^{\ast }) + L ^{m'} (0,T ; X ^{\ast })$,
we can apply Proposition \ref{Pro2.4}
and derive
\begin{equation}
\label{St1-16} 
\begin{split}
 &\limsup_{k \to \infty }  
\int_{t _1}^{t _2} 
 \LA \partial _V \phi _{\lambda _k } (u _{\lambda _k } (t)) , J _{\lambda 
_k}u_{\lambda _k } (t) \RA _X dt\\
& \leq
 \varepsilon  \LA \xi ' (t)  , u (t) \RA _{ L^{p } (t_1,t_2 ;V) \cap L ^{m} (t_1,t_2  ; X ) } \\
& - \int_{t _1}^{t _2 } \LA \varepsilon  d _V \psi (u(t)) + \varepsilon F_V (u  (t)) - f(t) - h(t) , u (t) \RA _V dt .
\end{split}
\end{equation}
Substituting  $\varepsilon \xi'  = \varepsilon  d_V \psi (u) + \varepsilon F_V (u) +  \eta - f -h $ 
  in \eqref{St1-16}, 
we have 
\begin{equation}
\label{St1-17} 
 \limsup_{k \to \infty }  
\int_{t _1}^{t _2} 
 \LA \partial _V \phi _{\lambda _k } (u _{\lambda _k } (t)) , J _{\lambda 
_k}u_{\lambda _k } (t) \RA _X dt
 \leq
  \int_{t _1}^{t _2 } \LA \eta (t) , u (t) \RA _X dt .
\end{equation}
Since 
 $\partial _V \phi _{\lambda _k } (u _{\lambda _k } )  \in 
\partial _V \phi (J _{\lambda _k}u _{\lambda _k } )  \subset \partial _X \phi _X (J _{\lambda _k}u _{\lambda _k } )$
and 
$\partial _X \phi _X  $ is maximal monotone in $L^m (t_1 ,t_2  ; X )$,
then \eqref{St1-17} implies that (recall Proposition \ref{Pro2.2})
\begin{equation}
\begin{split}
&\lim_{ k \to \infty }  
\int_{t _1}^{t _2} 
 \LA \partial _V \phi _{\lambda _k } (u _{\lambda _k } (t)) , J _{\lambda _n}u_{\lambda _k } (t) \RA _X dt
 =
  \int_{t _1}^{t _2 } \LA \eta (t) , u (t) \RA _X dt , \\
&  \eta (t) \in \partial _X \phi _X (u (t))~~~\text{ for a.e. } t\in (t_1 ,t_2 ).
\end{split}
  \label{St1-18} 
\end{equation}
Moreover, since $t _1 , t _  2 $ is chosen arbitrarily in
$\Upsilon $ which  has full Lebesgue measure in $(0,T)$,
we see that $\eta \in \partial _X \phi _X (u) $ in $L^m (0 , T  ; X )$.

Let $t_ 1 , t_2 \in \Upsilon $.
Recall that \eqref{St1-12} and  \eqref{St1-13} give us   
\begin{align*}
 & \varepsilon \int_{t _1}^{t _2 } \LA d_V \psi (u' _{\lambda } (t))  , u '_{\lambda } (t) \RA _V dt \\
 &	\leq 
		\varepsilon \LA d_V \psi (u' _{\lambda } (t _2 ))  , u _{\lambda } (t _2) \RA _V
 					-\varepsilon \LA d_V \psi (u' _{\lambda } (t_1))  , u _{\lambda } (t_1) \RA _V 
-  \int_{t _1}^{t _2}  \LA \partial _V \phi _{\lambda } (u _{\lambda } (t)) , J_{\lambda }u_{\lambda } (t) \RA _X dt 
	 \\
  &\hspace{5mm}   -\int_{t _1}^{t _2 }  \LA  \varepsilon d_V \psi (u _{\lambda } (t)) 
                         +  \varepsilon F_V (u _{\lambda } (t)) - f(t) - h(t) ,
		u_{\lambda } (t) \RA _V dt. 
\end{align*}
Taking the limit  $ \lambda = \lambda_k \to 0$ in the inequality above  and using 
\eqref{St1-18}, the definition of $\Upsilon $, and 
Proposition \ref{Pro2.4} (integration by parts),
we obtain
\begin{equation}
\begin{split}
   &  \varepsilon 
    	\limsup_{k \to \infty } 
	     \int_{t_1}^{t_2 } \LA d_V \psi (u'_{\lambda_k } (t))  , u'_{\lambda_k } (t) \RA_V dt    
\\
   &  \leq 
       \varepsilon \LA \xi (t_2 ), u (t_2) \RA_V
 		  -\varepsilon \LA \xi (t_1 ), u (t_1) \RA_V 
            - \int_{t_1}^{t_2}  \LA  \eta(t), u(t) \RA_X dt  
\\
   &   \hspace{5mm}   -\int_{t_1}^{t_2 }  \LA \varepsilon d_V \psi (u (t) ) 
                                            + F_V (u(t)) - f(t) - h(t), u(t) \RA_V dt  
\\
   &   \leq \varepsilon  \LA \xi', u  \RA_{L^m (t_1,t_2; X) \cap L^p (t_1,t_2; V) } 
             + \varepsilon  \int_{t_1}^{t_2}  \LA \xi, u'  \RA_{ V } dt 
              - \int_{t_1}^{t_2}  \LA  \eta(t), u (t) \RA_X dt                         
\\
   &   \hspace{5mm} -\int_{t_1}^{t_2 }  \LA  \varepsilon d_V \psi (u (t)) 
                           + \varepsilon F_V (u(t)) - f(t) - h(t), u(t) \RA _V dt. 
\end{split}
     \label{est:dVpsi:ulambda:prime} 
\end{equation}
Substituting  $\varepsilon \xi'  - \eta - \varepsilon  d _V \psi (u) - \varepsilon F_V (u) + f + h  =0 $ 
   in \eqref{est:dVpsi:ulambda:prime},
we have 
\begin{equation}\label{ineq:xi:uprime}
 	\limsup_{k \to \infty } 
		\int_{t_1}^{t_2 } \LA d_V \psi (u'_{\lambda_k } (t)), u'_{\lambda_k } (t) \RA _V dt 
		\leq \int_{t_1}^{t_2}  \LA \xi  , u '  \RA _{ V } dt .
\end{equation}
Hence we can repeat the same argument as that for $\eta \in \partial _X \phi _X (u)$
with aid of the maximal monotonicity of $d_V \psi $
and we can show that $\xi = d_V \psi (u)$ in $ L^{p' } (0,T ; V ^{\ast })$.
Therefore, we conclude that $u$ is the desired solution to (AP)$^h $
satisfying \eqref{Regu_S1}.
The uniqueness of solution can be assured by the monotonicity of operators in (AP)$^h$
and Proposition \ref{Pro2.1}. 
\end{proof}


\subsection{Step 2 (Proof of Theorem \ref{Th1}) }

 Let $\Xi := L^{p'} (0,T; V^{\ast}) $ with the weak topology.
   By Step 1, $\beta  $  defined by  the relationship \eqref{Scauder}
     is  well defined as an operator acting on $\Xi $.
In order to apply the Schauder--Tychonoff fixed point theorem,
we shall check that
\begin{itemize}
\item[{\#}1.] Let $K_R := \{ h\in L^{p ' } (0,T ; V ^{\ast});~|h| _{L^{p ' } (0,T ; V ^{\ast})}  \leq R \}$. 
                Then there exists $R >0 $ such that $\beta $ maps $ K_R $ into itself.

\item[{\#}2.] For every sequence $\{ h _n \} _{n \in N}$
which weakly converges to $h $ in $L^{p ' } (0,T ; V ^{\ast})$,
$\{ \beta (h_n )  \} _{n\in \N }$ 
weakly converges to $\beta (h )$ in $L^{p ' } (0,T ; V ^{\ast})$.

\end{itemize}
Then by virtue of Theorem 1 in Arino--Gautier--Penot \cite{Schauder}, 
 $\beta $ possesses at least one fixed point $h_0 \in  L^{p ' } (0,T ; V ^{\ast})$.
 Obviously, $u_ {h _0 }$ is a solution to (AP)$_{\varepsilon }$
satisfying regularities \eqref{Regu_T1} in Theorem \ref{Th1}.

In this subsection, $c_2$ and $C_2$ stand for general constants 
independent $\varepsilon > 0 $ and $h $.

\vspace{2mm} 
(Proof of {\#}1)  Let $\eta_h$  be the section of $ \partial_X \phi_X (u_h)$  
	satisfying (AP)$^h$.
Repeating the same calculation as  that for \eqref{St1-02}, i.e.,
multiplying the equation of  (AP)$^h$ by $u_h $
and using
\begin{align}
   \int_0^T \LA \eta_h(t), u_h(t) \RA_X dt 
       & = \int_0^T \LA \eta_h(t), u_h(t) - v \RA_X dt 
              + \int_{0}^{T}  \LA \eta_h (t), v \RA_X dt   \notag
\\
       & \geq  \int_0^T \phi(u_h(t)) dt - T \phi(v) 
              - |v |_X \int_0^T  | \eta_h (t) |_{X^{\ast}} dt
                \quad  \forall v \in D(\phi)  \label{est:eta:u:X}
\\
       &  \geq c_2 |u_h |^m_{L^m(0,T;X)} - C_2,            \notag
\end{align}	
 we obtain by Young's inequality
\begin{equation}
  \label{St2-02} 
   \begin{split} 
     & c_2 \varepsilon | u _ h | ^p _{W^{1,p} (0,T ;V)}
         + c_2 | u _ h | ^m _{L^{m} (0,T ;X)}
           + \varepsilon  | u _h |^2_{ {L^{2} (0,T ;V )} }
\\
     &~~ \leq 
           \int_{0}^{T} \LA  f(t) + h(t) , u_{\lambda } (t) \RA _V dt 
		      + 2 ( \varepsilon +1 ) ~\! C_2  
\\ 
     &~~ \leq
            C_{2 } \LC | f | ^{p'} _{L^{p'} (0,T ;V^{\ast })} +1 \RC 
                  + \delta  | h | ^{p'} _{L^{p'} (0,T ;V^{\ast })}
                    + C_{\delta }  | u_h | ^{p} _{L^{p} (0,T ;V)},
   \end{split}
\end{equation}
where  $\delta > 0  $ is an arbitrarily fixed constant
and $C_\delta $ is a general constant determined only by $\delta >0$.
Since we assume that $X \hookrightarrow V $ and $m > p $,
we get
\begin{equation*}
C _{\delta }  | u_h | ^{p} _{L^{p} (0,T ;V)} 
  \leq 
C _{\delta }  | u_h | ^{p} _{L^{m} (0,T ;X )} 
  \leq 
\delta  | u_h | ^{m} _{L^{m} (0,T ;X )} 
 + C _{\delta } .
\end{equation*}
Hence for every  $\delta \leq c_2 $,
we have
\begin{equation*}
  c_2 \varepsilon  | u'_ h |^p_{L^{p}(0,T ;V)} \leq  
      C_{\delta }  \LC  1 + | f |^{p'}_{L^{p'}(0,T;V^{\ast})} \RC
        + \delta  | h |^{p'}_{L^{p'} (0,T;V^{\ast })}.
\end{equation*}
 From \eqref{A2} we can derive 
\begin{equation}\label{est:beta:h}
   \begin{split}
   c_2 \varepsilon  | \beta (h) |^{p'}_{L^{p'} (0,T;V^{\ast })}  
      & = c_2 \varepsilon  | d_V \psi (u'_ h) |^{p'}_{L^{p'} (0,T;V^{\ast })} 
\\
      & \leq  C ~\! C_{\delta }  \LC  1 + | f |^{p'}_{L^{p'} (0,T;V^{\ast })} \RC
          +  C ~\! \delta  | h |^{p'}_{L^{p'}(0,T;V^{\ast })} 
            + C ~\! C_\delta \varepsilon ~\! T, 
   \end{split}            
\end{equation}
 where $C$ is the constant appearing in \eqref{A2}.

Here we fix $R$ and $\delta$ such that 
\begin{equation*}
   C ~\! C_{\delta }  \LC  1 + | f |^{p'}_{L^{p'} (0,T;V^{\ast })} \RC 
     + C ~\! C_\delta \varepsilon ~\! T
       = \frac{c_2 ~\!\varepsilon }{2} R^{p'}, 
         \quad \delta = \min ( c_2, \frac{c_2 \varepsilon}{2 ~\! C} ).
\end{equation*}
  Then by \eqref{est:beta:h}, we get 
\begin{equation*}
   | \beta (h) |^{p'}_{L^{p'} (0,T;V^{\ast })} 
     \leq \frac{1}{c_2 \varepsilon} \left( \frac{c_2 \varepsilon}{2} 
                                             + C ~\! \delta \right) R^{p'} 
       \leq R^{p'},
\end{equation*}
which implies $ \beta(h) \in K_R$.
\qed 
 
(Proof of {\#}2) Let $\{ h_n \}_{n \in \N }$ be a sequence such that $h_n \rightharpoonup h$. 
 Let $u_n = u_{h_n}$ be the solution of (AP)$^{h_n}$ and denote by $\eta_n $ the section of
	  $\partial_X \phi_X(u_n) $ satisfying (AP)$^{h_n}$.   
Moreover, let $C' $ denote a general constant independent of $n$, which may depend on $\varepsilon $.
Noting  that $  | h _n  |  _{L^{p'} (0,T ;V^{\ast })} \leq C' $,
 we repeat the same procedure to establish a priori estimates as in the previous step.
Indeed, multiplying the equation of (AP)$^{h_n}$ by $u_n$, we have
(see \eqref{St2-02}) 
\begin{equation*}
 | u_n |^p_{W^{1, p} (0,T ;V)} + | u_n |^2_{L^{2} (0,T ;V)} 
    +  \int_{0}^{T}  \LA \eta_n (t), u _n  (t) \RA_{ X} dt
      \leq C' ,
\end{equation*}
which yields  (see \eqref{est:eta:u:X})
\begin{equation}\label{est:int:phi}
   |F_V (u_n ) |_{L^{p'} (0,T; V^{\ast})} +  
      \int_{0}^{T} \phi ( u_n (t) ) dt \leq C'.
\end{equation}
Then by \eqref{A2}, \eqref{A3}, and \eqref{A7}, we get
\begin{equation*} 
 | d _V \psi ( u  _n  ) |  _{L^{ p'} (0,T ;V^{\ast})} 
+
 | d _V \psi ( u '_n  ) |  _{L^{ p'} (0,T ;V^{\ast})} 
+
 |  u _n   |  _{L^{ m} (0,T ;X)} 
+
 |  \eta  _n   |  _{L^{ m ' } (0,T ;X ^{\ast })} 
 \leq C' .
\end{equation*}
From the boundedness of remainder terms in (AP)$^{h_n}$,
we obtain
\begin{equation*} 
 |  (d_V \psi (u' _n ))'  |  _{L^{ p ' } (0,T ;V ^{\ast }) +L^{ m ' } (0,T ;X ^{\ast })} 
 \leq C' .
\end{equation*}

 Arguments for the convergence given in the previous step can be repeated 
	for this step. In fact, applying Theorem 3 of \cite{Simon}
to $\{ u_n \} _{n \in \N }$, which is uniformly bounded in 
$ W^{ 1 ,p  } (0,T ;V ) \cap L^{ m  } (0,T ;X )$,
we can extract a subsequence denoted by $\{ u_l \} _{l \in \N }$
which strongly converges in $C([0,T] ; V)$.
Let $u$ stands for its limit (remark $u(0) = u(T)$ holds).
Furthermore, there exists a subsequence of  $\{ u_l \} _{l \in \N }$
(we still employ the same index) such that
\begin{align*}
F_V (u _l ) \rightharpoonup \exists w  ~~~&~~\text{ weakly in } L^{p'} (0,T ; V ^{\ast} ),\\
\eta _l
\rightharpoonup \exists \eta
					~~~&~~\text{ weakly in } L^{m'} (0,T ; X ^{\ast} ), \\
d _V \psi  (u _l) \rightharpoonup \exists a
					~~~&~~\text{ weakly in } L^{p'} (0,T ; V ^{\ast} ), \\
d _V \psi  (u ' _l) \rightharpoonup \exists \xi
					~~~&~~\text{ weakly in } L^{p'} (0,T ; V ^{\ast} ), \\
( d _V \psi  (u ' _l)  )' \rightharpoonup  \xi '
					~~~&~~\text{ weakly in } L^{p'} (0,T ; V ^{\ast} )+ L^{m'} (0,T ; X ^{\ast} ).
\end{align*}
  By virtue of the demiclosedness of $F_V$ and $d_V \psi$, we can easily see that $w = F_V (u)$ 
 and $a = d_V \psi (u )$.  Therefore limit functions given above fulfill
the equation
\begin{equation*}
- \varepsilon  \xi '
+ \varepsilon d_V \psi (u) + \eta + \varepsilon F_V (u)
= f + h ~~~\text{in}~~ L^{p'} (0,T ; V ^{\ast} )+ L^{m'} (0,T ; X ^{\ast} ).
\end{equation*}
By the same  reasoning as that for \eqref{St1-11},
we can deduce the strong convergence of $\{ d _V \psi  (u '_l) \} _{l \in \N }$
to $\xi $ in $C([0,T] ; X^{\ast})$ and then $\xi (0) = \xi(T)$.

Define $\Upsilon _h $, which has full Lebesgue measure in $(0,T)$ by 
\begin{equation*}
\Upsilon _h
:=
\LD
\begin{array}{l|l}
~~ &
 t \text{ is a Lebesgue point of } t \mapsto \LA \xi (t) ,  u (t) \RA _V \text{ and } \\
 t \in (0, T ) &
  \{ l \} \text{ has a subsequence } 
	\{ l ^t _{k} \}  \text{ tending to } \infty  \text{ as } k \to \infty  \\
~~ &
 \text{s.t. } \LA d _V \psi (u^{ \prime}_{l ^t _{k}} (t) ),  u _{l ^t _{k}}(t) \RA _V
			\to \LA \xi (t) ,  u (t) \RA _V \text{ as } k \to \infty.
\end{array}
\RD .
\end{equation*}
Let $t_1 < t_2 $ belong to $\Upsilon $.
Tracing the argument from \eqref{St1-13} to \eqref{St1-16},
we can show that
\begin{align*}
&\limsup _{k \to \infty } 
\int_{t_1}^{t_2} \LA  \eta _{ l  _{k}} (t) , u_{ l _k  } (t)  \RA _X dt \\
&~\leq 
\varepsilon \LA  \xi ' (t) , u (t)  \RA _{ L^{p} (t_1 , t_2  ; V  )\cap L^{m} (t_1 , t_2  ; X  )}
 - \int_{t _1}^{t _2 } \LA \varepsilon  d _V \psi (u(t)) + \varepsilon F_V (u  (t)) - f(t) - h(t) , u (t) \RA _V dt \\
&~= 
\int_{t_1}^{t_2} \LA  \eta  (t) , u(t)  \RA _X dt ,
\end{align*}
where $\{ l _k \} _{k \in \N}$ is  a suitable subsequence of $\{ l \}  $.
Hence from Proposition \ref{Pro2.2}, we derive $\eta \in \partial _X \phi _X (u)$ in $L^m (0,T ; X)$.
Moreover,  repeating the same verification as that for 
	\eqref{ineq:xi:uprime}, we can obtain
\begin{equation*}
\limsup _{l \to \infty } \int_{t_1 }^{t_2 }  \LA  d_V \psi (u'_l (t)) , u'_l (t) \RA _V  dt
\leq  
\int_{t_1 }^{t_2 } \LA \xi (t) , u'  (t) \RA _V dt,
\end{equation*}
which implies $\xi = d _V \psi (u' )$ in $L^{p'} (0,T ; V^{\ast })$.

Since the uniqueness of solution to (AP)$^h$ 
assures that the above argument does not depend on the choice of subsequences,
 the original sequence 
$\{ d _V \psi (u' _{n}) \} _{n \in \N} = \{ \beta (h_ n ) \} _{n \in \N} $ 
also converges to  $ d _V  \psi  (u' )  = \beta (h ) $ 
 weakly in $L^{p'} (0,T; V^{\ast})$. 
 \qed 

\begin{remark}   
   In order to derive the uniqueness of the solution of (AP)$_\varepsilon$ by the standard argument, 
     we need to handle the following terms concerning the difference of two solutions 
       $u_1$ and $u_2$: 
\begin{equation*}
 \LA  d_V \psi (u'_1 ) - d_V \psi (u'_2 ), u_1 - u_2  \RA _V,~~~~
   \LA  \partial  \phi(u_1) - \partial \phi  (u_2 ), u'_1 - u'_2  \RA_V. 
\end{equation*} 
  Even for Cauchy problem, assumptions \eqref{A1}-\eqref{A4} are not enough to control these terms. 
   The characterization of Cauchy problem by Euler--Lagrange equation of the WED functional
copes with this difficulty 
arising in the doubly nonlinear evolution equations
and enables us to assure the uniqueness of solution to approximation problem 
when either $\phi $ or $\psi $ is strictly convex
(see Theorem 4.2 of Akagi--Stefanelli  \cite{AS_Cau}).
If one leaves variational approach, however,
the uniqueness can not be assured only by such a natural condition
especially in the case of the time-periodic problem,
where the technical assumptions might be more aggravated than those in Cauchy problem in general.
\end{remark}

In order to  complete the proof of Theorem \ref{Th1}, we verify the following:
\begin{lemma}
\label{Lem3.1} 
There exist a solution $u _{\varepsilon }$ to {\rm (AP)}$_{\varepsilon }$
which satisfies
\begin{equation}
\label{St3-01}
\int_{0}^{T} \LA d _V \psi ( u' _{\varepsilon }(t)  ) , u' _{\varepsilon } (t)\RA _V dt
\leq
\int_{0}^{T} \LA  f(t) , u' _{\varepsilon } (t)\RA _V dt.  
\end{equation}
\end{lemma}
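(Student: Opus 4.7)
The strategy is to return to the Moreau--Yosida approximation (AP)$^{h_{0}}_{\lambda}$ at the fixed-point level $h = h_{0}$ and test by $u'_{\lambda}$; periodicity will annihilate every term on the left-hand side, leaving an identity that passes cleanly to the limit $\lambda \to 0$. The crucial algebraic identity I will use throughout is that $h_{0}$ is the fixed point of $\beta$ from Step 2, namely $h_{0} = -d_{V}\psi(u'_{\varepsilon})$, which is exactly what converts (AP)$^{h_{0}}$ into (AP)$_{\varepsilon}$.

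Applying $\langle \,\cdot\,, u'_{\lambda}\rangle_{V}$ to the equation (AP)$^{h_{0}}_{\lambda}$ and integrating over $(0,T)$, I will treat each of the four terms on the left as follows. The first term $-\varepsilon \int_{0}^{T}\langle (d_{V}\psi(u'_{\lambda}))', u'_{\lambda}\rangle_{V}\,dt$ requires the most care: using the Legendre--Fenchel identity $u'_{\lambda}(t) \in \partial_{V^{*}}\psi^{*}(d_{V}\psi(u'_{\lambda}(t)))$, together with the regularity $d_{V}\psi(u'_{\lambda}) \in W^{1,(\bar{p})'}(0,T;V^{*})$ and $u'_{\lambda} \in L^{\bar{p}}(0,T;V)$ supplied by Lemma \ref{LEM3-1}, Proposition \ref{Pro2.3} applied on $V^{*}$ to $\psi^{*}$ identifies the integrand as $\tfrac{d}{dt}\psi^{*}(d_{V}\psi(u'_{\lambda}(t)))$, so the integral collapses to $\psi^{*}(d_{V}\psi(u'_{\lambda}(T))) - \psi^{*}(d_{V}\psi(u'_{\lambda}(0))) = 0$ by the boundary constraint $d_{V}\psi(u'_{\lambda}(0)) = d_{V}\psi(u'_{\lambda}(T))$. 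The remaining three terms reduce, by the chain rule on $V$ together with the G\^{a}teaux differentiability of $\psi$, the $C^{1}$-regularity of $\phi_{\lambda}$, and the smoothness of $\tfrac{1}{2}|\cdot|_{V}^{2}$, to the boundary differences $\varepsilon[\psi(u_{\lambda})]_{0}^{T}$, $[\phi_{\lambda}(u_{\lambda})]_{0}^{T}$, and $\tfrac{\varepsilon}{2}[|u_{\lambda}|_{V}^{2}]_{0}^{T}$ respectively, all of which vanish by $u_{\lambda}(0) = u_{\lambda}(T)$. Consequently $\int_{0}^{T}\langle f(t) + h_{0}(t), u'_{\lambda}(t)\rangle_{V}\,dt = 0$ for every $\lambda > 0$.

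Passing to the limit $\lambda \to 0$ along the subsequence from Step 1, the weak convergence $u'_{\lambda} \rightharpoonup u'_{\varepsilon}$ in $L^{p}(0,T;V)$ combined with $f + h_{0} \in L^{p'}(0,T;V^{*})$ yields $\int_{0}^{T}\langle f + h_{0}, u'_{\varepsilon}\rangle_{V}\,dt = 0$. Substituting $h_{0} = -d_{V}\psi(u'_{\varepsilon})$ gives the equality $\int_{0}^{T}\langle d_{V}\psi(u'_{\varepsilon}), u'_{\varepsilon}\rangle_{V}\,dt = \int_{0}^{T}\langle f, u'_{\varepsilon}\rangle_{V}\,dt$, which is stronger than the claimed inequality. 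The main obstacle will be the first-term identification: one must correctly set up the chain rule for $\psi^{*}$ with the dual pair $(W^{1,(\bar{p})'}(0,T;V^{*}),\,L^{\bar{p}}(0,T;V))$ via the reflexive identification $(V^{*})^{*} \cong V$, after which the other three cancellations are routine periodicity computations and the final limit passage reduces to a standard weak-strong duality against the fixed datum $f + h_{0}$.
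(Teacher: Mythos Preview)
Your proof is correct and follows the same overall route as the paper: return to the $\lambda$-level, test (AP)$^{h}_{\lambda}$ by $u'_{\lambda}$, exploit periodicity to kill the $\psi(u_\lambda)$, $\phi_\lambda(u_\lambda)$ and $\tfrac12|u_\lambda|_V^2$ terms, pass to the limit via $u'_\lambda \rightharpoonup u'_{h}$ in $L^p(0,T;V)$, and finally insert the fixed-point identity $h_0 = -d_V\psi(u'_\varepsilon)$.

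The one genuine difference is your treatment of the elliptic regularization term $-\varepsilon\int_0^T\langle (d_V\psi(u'_\lambda))',u'_\lambda\rangle_V\,dt$. The paper only shows this integral is nonnegative, invoking an inequality of the type \eqref{LF} imported from \cite[Lemma~A.1]{AS_Cau}, and therefore obtains \eqref{St3-01} only as an inequality. You instead apply Proposition~\ref{Pro2.3} directly to $\psi^{*}$ on the dual space $E=V^{*}$ (legitimate since $V^{*}$ is uniformly convex by (A.0), $d_V\psi(u'_\lambda)\in W^{1,(\bar p)'}(0,T;V^{*})$, and $u'_\lambda\in L^{\bar p}(0,T;V)=L^{\bar p}(0,T;(V^{*})^{*})$ with $u'_\lambda(t)\in\partial_{V^{*}}\psi^{*}(d_V\psi(u'_\lambda(t)))$ by Fenchel duality), which identifies the integrand as $\tfrac{d}{dt}\psi^{*}(d_V\psi(u'_\lambda(t)))$ and shows the term is exactly zero by the periodic boundary condition on $d_V\psi(u'_\lambda)$. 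This yields \emph{equality} in \eqref{St3-01}, which is stronger than what the paper states and avoids the external reference; the paper's argument, on the other hand, would still go through under weaker time regularity of $d_V\psi(u'_\lambda)$.
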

\begin{proof}
We return to Step 1.
Let $u _{\lambda }$ be  the unique solution to (AP)$^{h} _{\lambda }$.
Multiplying (AP)$^{h} _{\lambda }$ by $u ' _{\lambda }$ 
and integrating over $(0,T)$ with respect to $t$, 
we have
\begin{equation*}
- \varepsilon \int_{0}^{T} \LA  (d_V \psi (u' _{\lambda } (t))) ' , u' _{\lambda }(t) \RA _V dt
- \int_{0}^{T}  \LA h(t) , u'  _{\lambda } (t) \RA _V dt
= 
\int_{0}^{T}  \LA f(t) , u'  _{\lambda } (t) \RA _V dt.
\end{equation*}
Here we used Proposition \ref{Pro2.3} (chain rule) and the periodicity of $u _{\lambda }$.

By formal calculation, we get 
\begin{align*}
&-  \int_{0}^{T} \LA  (d_V \psi (u' _{\lambda } (t))) ' , u' _{\lambda }(t) \RA _V dt \\
& = -\LA  d_V \psi (u' _{\lambda } (0))  , u' _{\lambda }(T) -u' _{\lambda }(0)  \RA _V 
+
 \int_{0}^{T} \LA  d_V \psi (u' _{\lambda } (t)) , u'' _{\lambda }(t) \RA _V dt \\
& \geq \psi (u'  _{\lambda }(0)) - \psi (u' _{\lambda } (T))  
+
 \int_{0}^{T} \frac{d}{dt}    \psi (u' _{\lambda } (t))  dt = 0 .
\end{align*}
Although we can not assure the existence of $u''_\lambda(t)$ in a proper way, 
we can rigorously justify the following:
\begin{equation}
- \int_{0}^{T}   \LA  ( d _V \psi ( u ' _{\lambda} (t)  ) )' , u'  _{\lambda } (t) \RA _V dt
\geq 0 . 
\label{St3-02} 
\end{equation}
Indeed, 
by the same procedure as that in the proof for Lemma A.1 in \cite{AS_Cau}, 
one can see that
\begin{equation}
\int_{t_1}^{t_2} \LA (d _V \psi ( u ' _ {\lambda } (t)) )' , u' _{\lambda }(t) \RA _V dt
\leq \psi ^{\ast } (d _V \psi (u' _ {\lambda } (t_ 2 ) )) -\psi ^{\ast } (d _V \psi (u'_ {\lambda }  (t_ 1 ) ))   
\label{LF} 
\end{equation}
holds for every $t _1 , t_2 $ belonging to 
\begin{equation*}
\Theta  _{\lambda }
:=
\LD
\begin{array}{l|l}
~~ &
 ~~~t \text{ is a Lebesgue point of } \\
 t \in (0, T ) &
   ~~~t \mapsto \LA (d _V \psi ( u ' _ {\lambda } (t)) )' , u' _{\lambda }(t) \RA _V  \\
~~&  ~~~\text{and } u_{\lambda } \text{ is differentiable at } t.
\end{array}
\RD ,
\end{equation*}
where $\psi ^{\ast }$ denotes the Legendre--Fenchel transform of $\psi $.
Since $d_V \psi (u' _{\lambda } (t)) \in W^{1, p' } (0,T ; V ^{\ast }) $ and $ u_{\lambda } \in W^{1,p } (0,T ; V )   $
(recall \eqref{Reg3-1}), $\Theta _{\lambda }$ has full Lebesgue measure in $(0,T)$.
Moreover, from the fundamental facts that $\psi ^{\ast } $ is proper l.s.c. convex functional on $V^{\ast }$
and $[v, u] \in \partial _{V^{\ast} } \psi ^{\ast }$ is equivalent to $v = \partial _V \psi (u) =d _V \psi (u) $,
we can derive  the absolute continuity of $\psi ^{\ast } (d_ V \psi (u'_{\lambda } (\cdot )))$ on $[0,T ]$.
Then letting $t_1 \to 0 $ and $t_ 2 \to T $, we obtain \eqref{St3-02}.

Hence $u _{\lambda }$ satisfies  
\begin{equation}
- \int_{0}^{T}  \LA h(t) , u'  _{\lambda } (t) \RA _V dt
\leq 
\int_{0}^{T}  \LA f(t) , u'  _{\lambda } (t) \RA _V dt.
\label{St3-03} 
\end{equation}
Since we already showed that $u ' _{\lambda } \rightharpoonup u ' _ h $ in $L^p (0,T ;V)$ as $\lambda \to + 0 $,
where $u _ h $ stands for the unique solution to (AP)$^h $,
 \eqref{St3-03} leads to
\begin{equation}
- \int_{0}^{T}  \LA h(t) , u'  _h (t) \RA _V dt
\leq 
\int_{0}^{T}  \LA f(t) , u'  _h (t) \RA _V dt
\label{St3-04} 
\end{equation}
 for every $h \in L ^{p '} (0,T ;V ^{\ast})$.
 Now let $h _0  \in L ^{p '} (0,T ;V ^{\ast})$ be a fixed point of $\beta $,
 i.e., $h _ 0 = \beta (h _ 0 ) := - d_ V \psi (u '_{h_0 })$.
Then substituting $h_0$ for $h$ in \eqref{St3-04},
we can  assure that $u _{h _0}$ is one of solutions to (AP)$_{\varepsilon }$
satisfying \eqref{St3-01}.
\end{proof}

\subsection{Step 3  (Proof of Theorem \ref{Th2} : Case $p < m$)}

Henceforth, let $u_{\varepsilon }$ be the solution  of (AP)$_{\varepsilon }$ 
given in Theorem \ref{Th1}
 and $C _3$ stand for a general constant independent of the parameter $\varepsilon \in (0,1)$.
We first note that the definition of the subdifferential and \eqref{A1} yield 
\begin{equation*}
   \LA d_V \psi(u'_\varepsilon(t)), u'_\varepsilon(t) \RA_V 
      \geq  \psi(u'_\varepsilon(t)) - \psi(0) 
         \geq \frac{1}{C} |u'_\varepsilon(t)|_V^p - 1 - \psi(0).
\end{equation*}
  Hence by \eqref{St3-01}, we get 
\begin{equation*}
  \int_0^T |u'_\varepsilon(t) |^p_V dt 
    \leq  C_3 \Bigl( \int_0^T \LA  f(t), u'_\varepsilon(t) \RA_V dt + 1 \Bigr)  
      \leq \frac{1}{2} \int_0^T |u'_\varepsilon(t) |^p_V dt + 
             C_3 \bigl( |f|_{L^{p'}(0,T;V^\ast)}^{p'} + 1 \bigr), 
\end{equation*}
  whence follows the a priori bound for $|u'_\varepsilon|_{L^p(0,T;V)}$. 
   Then by \eqref{A2} and \eqref{A6}, we obtain 
\begin{equation}
  \label{St3-05}
\int_{0}^{T} | u' _{\varepsilon }(t)| ^p _V dt
+
\int_{0}^{T} | d_V \psi ( u' _{\varepsilon }(t) )| ^{p'} _{V^{\ast}} dt
+
\int_{0}^{T}  \psi ( u' _{\varepsilon }(t) ) dt
 \leq C _3.
\end{equation}
Next multiplying (AP)$_{\varepsilon }$ by $u _{\varepsilon }$ and
repeating the same argument as  that for \eqref{St2-02},
we have
\begin{align*}
&  
 \varepsilon | u _{\varepsilon } | ^p _{W^{1,p} (0,T ;V)}
    + | u _{\varepsilon } | ^m _{L^{m} (0,T ;X)}  
      + \varepsilon  | u _{\varepsilon } | ^2 _{L^{2} (0,T ;V )} 
\\
&
  \leq  C_3 \LC | f |^{p'}_{L^{p'} (0,T ;V^{\ast })} 
                 + |d_V \psi (u'_{\varepsilon } (t))|^{p'} _{L^{p'} (0,T ;V^{\ast })} 
                    + | u_{\varepsilon }  | ^{p} _{L^{p} (0,T ;V)}  + 1 \RC
 \\
&
   \leq  C_3 + \frac{1}{2} | u_{\varepsilon }  |^{m} _{L^{m} (0,T ;X)},
\end{align*}
that is,
\begin{equation}
  \label{St3-06} 
 \varepsilon | u _{\varepsilon } | ^p _{W^{1,p} (0,T ;V)}
+
 | u _{\varepsilon } | ^m _{L^{m} (0,T ;X)}  
+
\varepsilon  | u _{\varepsilon } | ^2 _{L^{2} (0,T ;V )} 
\leq 
C _3.
\end{equation}
 Hence the canonical embedding $L^{m} (0,T; X) \hookrightarrow L^{p} (0,T; V) $
and \eqref{St3-05}
yield
\begin{equation}
  \label{St3-07}
   | u _{\varepsilon } | ^p _{W^{1,p} (0,T ;V)} \leq C _3.
\end{equation}
Moreover, from  \eqref{A2} and \eqref{A4}, we can derive
\begin{equation}
  \label{St3-08}
\int_{0}^{T} | \eta _{\varepsilon } (t)| ^{m'} _{X^{\ast }} dt
  +
    \int_{0}^{T}  | d_V \psi (u_{\varepsilon }(t))|_{V^\ast}^{p'}  dt  
      \leq  C_3,
\end{equation}
where $\eta_{\varepsilon }$ is the section of 
   $\partial_X \phi_X(u_\varepsilon(t))$ satisfying (AP)$_{\varepsilon }$.
 Hence by the equation of (AP)$_{\varepsilon }$, we also have
\begin{equation}
  \label{St3-09} 
|\varepsilon ( d_V \psi (u '_{\varepsilon } )) '| _{L^{p'} (0,T; V^{\ast}) + L^{m'} (0,T; X^{\ast}) }
\leq C _3.
\end{equation}

By using \eqref{St3-05}--\eqref{St3-09}, we discuss the convergence of $u_{\varepsilon }$.
To begin with,
\eqref{St3-06} and 
\eqref{St3-07} enable us to apply 
Theorem 3 of  \cite{Simon}
and extract a subsequence $\{ u_ {\varepsilon _ n } \} _{n \in \N}$
which converges  strongly in $C([0,T]; V)$.
Its limit, denoted by $u$, clearly satisfies $u (0) = u (T)$.
We  are going to show that $u$ is the desired periodic solution of (AP).
   By \eqref{St3-07} and \eqref{St3-08}, we get 
\begin{equation*}
| \varepsilon d _V \psi ( u_{\varepsilon })| _{L^{p'} (0 ,T ;V^{\ast })}
+
| \varepsilon F _V  ( u_{\varepsilon })| _{L^{p'} (0 ,T ;V^{\ast })}
\leq 
\varepsilon C _3,
\end{equation*}
which implies that
$\{ \varepsilon _n d _V \psi ( u_{\varepsilon _n}) \} _{n \in \N }$
and 
$\{ \varepsilon _n F _V  ( u_{\varepsilon _n}) \} _{n \in \N }$
 converge to zero  strongly in $L^{p'} (0 ,T ;V^{\ast })$.
Furthermore, there exists a subsequence,
still denoted by  $\{ u_ {\varepsilon_ n } \}_{n \in \N}$,
such that
\begin{align*}
  u_{\varepsilon_n}  \rightharpoonup ~ u 
                     ~~~&~~ \text{ weakly in } L^{m} (0,T ; X ),
\\
 \eta _{\varepsilon _n } \rightharpoonup \exists \eta
					  ~~~&~~ \text{ weakly in } L^{m'} (0,T ; X ^{\ast} ), \\
 d _V \psi  (u ' _ {\varepsilon  _n }) \rightharpoonup \exists  \xi
					~~~&~~ \text{ weakly in } L^{p'} (0,T ; V ^{\ast} ), \\
 \varepsilon ( d _V \psi  (u ' _{\varepsilon _n })  )' \rightharpoonup \exists \zeta 
					  ~~~&~~ \text{ weakly in } L^{p'} (0,T ; V ^{\ast} )+ L^{m'} (0,T ; X ^{\ast} ).
\end{align*}
For every $v \in W^{1,m} (0,T ;X )$  with $v ( 0) = v (T) $, 
  we get by \eqref{St3-05} 
\begin{equation*}
  \LA  \varepsilon_n (d_V \psi (u'_{\varepsilon_n } ))', v  
   \RA_{L^{p}(0,T;V) \cap L^{m}(0,T; X)} 
     = - \varepsilon_n  \int_{0}^{T} \LA  d_V \psi(u'_{\varepsilon_n }(t)), v'(t) \RA_V dt
      \to 0
\end{equation*}
as $n \to \infty $ (use Proposition \ref{Pro2.4}).
Then $\zeta  = 0$ holds by the density.
Therefore,  we obtain $\xi + \eta = f $.
Immediately, $\eta $ belongs to $ L^{p'} (0,T ; V ^{\ast })$, 
since the remainders $\xi $ and $f$ are both members of  $ L^{p'} (0,T ; V ^{\ast })$.

Multiplying (AP)$_{\varepsilon _n}$ by $u_{\varepsilon _n}$,
  we have
\begin{align*}
&\int_{0}^{T} \LA \eta _{\varepsilon _n} (t) , u_{\varepsilon _n} (t) \RA _X dt \\
&=-\varepsilon _n  \int_{0}^{T} \LA d_V \psi (u' _{\varepsilon _n}  (t)) , u' _{\varepsilon _n} (t) \RA _V dt
 -\varepsilon _n  \int_{0}^{T} \LA d_V \psi (u _{\varepsilon _n}  (t)) , u _{\varepsilon _n} (t) \RA _V dt \\
&~~~~ -\varepsilon _n  | u _{\varepsilon _n} | ^2 _{L^2 (0,T ;V )} 
 - \int_{0}^{T} \LA d_V \psi (u ' _{\varepsilon _n}  (t)) , u _{\varepsilon _n}(t)  \RA _V dt
  + \int_{0}^{T} \LA f(t) , u _{\varepsilon _n} (t) \RA _V dt.
\end{align*}
Taking the limit as $n \to \infty$,
we obtain
\begin{align*}
\limsup _{n \to \infty }
\int_{0}^{T} \LA \eta _{\varepsilon _n} (t) , u_{\varepsilon _n} (t) \RA _X dt 
&=
 - \int_{0}^{T} \LA \xi (t) , u (t)  \RA _V dt
  + \int_{0}^{T} \LA f(t) , u (t) \RA _V dt \\
&= 
  \int_{0}^{T} \LA \eta (t) , u (t) \RA _V dt \ 
     = \int_{0}^{T} \LA \eta (t) , u (t) \RA _X dt,
\end{align*}
which implies $\eta \in \partial _X \phi _X( u )$ thanks to Proposition \ref{Pro2.2}.
  Hence  by virtue of \eqref{sub} together with the fact that $\eta \in L^{p'}(0,T;V^\ast)$, 
    we can conclude 
\begin{equation}\label{eta:in:partialphi}
   \eta
 \in \partial_V \phi ( u ) .
\end{equation}
Finally, letting $\varepsilon _ n \to 0$ in \eqref{St3-01} of  Lemma \ref{Lem3.1},
we can see that
\begin{align*}
\limsup _{n \to \infty }
\int_{0}^{T} \LA d_V \psi (u'_{\varepsilon _n} (t)) , u' _{\varepsilon _n} (t) \RA _V dt 
&  \leq 
\int_{0}^{T} \LA f(t) , u'  (t) \RA _V dt \\
&=
\int_{0}^{T} \LA f (t) - \eta  (t) , u'  (t) \RA _V dt 
=
\int_{0}^{T} \LA \xi (t) , u'  (t) \RA _V dt. 
\end{align*}
Here we used Proposition \ref{Pro2.3}  and \eqref{eta:in:partialphi}.
  Then maximal monotonicity of $d_V \psi $ leads to $\xi = d _V \psi (u' )$ in $L^{p'} (0,T ; V ^{\ast})$,
    hence it follows that $u $ is a solution to (AP).
 
 Furthermore Proposition \ref{Pro2.3} together with \eqref{St3-07} and 
 	\eqref{eta:in:partialphi} assures that $\phi(u(t))$ is absolutely continuous on $[0,T]$ 
 	   and hence \eqref{A3} assures that $ u \in L^\infty(0,T;X)$. 
\qed

\subsection{Step 4 (Proof of Theorem \ref{Th2} : Case $ m \leq p$)}
 In this subsection, we consider the excluded case,
 i.e., the case where $m \leq p$. 
Put
\begin{equation*}
\Phi  := \phi + \frac{\mu }{1+ \alpha } \phi ^{1+ \alpha }, 
\end{equation*}
where $\mu \in (0,1 ) $ and $\alpha $ is some fixed exponent with $\DS \alpha > p/m -1 $.
It is easy to see that $\Phi $ is proper l.s.c. convex functional over $V$.
 Since $\phi $ satisfies \eqref{A3},  
\begin{align*}
\Phi  (u) & \geq c  _{\mu} |u| ^m _X(1 + |u|^{m\alpha } _X) -C  _{\mu} \\
& \geq c _{\mu} |u|^{m ( \alpha + 1 ) } _X -C  _{\mu}
\end{align*}
holds for every $u\in D(\Phi ) = D(\phi )$ with some constants $c  _{\mu} ,C  _{\mu}> 0$
(which may depend on the parameter $\mu$).
Moreover, since 
\begin{equation}
\partial _V \Phi 
=
\partial _V  \phi + \mu \phi ^{\alpha } \partial _V \phi ,
~~
\partial  _X \Phi  _X 
=
\partial  _X  \phi _X + \mu \phi ^{\alpha }   _X \partial  _X \phi _X 
\label{Lemma2} 
\end{equation}
(whose proof will be given in Appendix),
then for any $\LB u , \eta \RB \in \partial _X  \Phi _X $ there exist some $C > 0$ 
independent of $\mu $ such that
\begin{align}
|\eta | ^{m' \LC \frac{\alpha +1}{ m' \alpha +1} \RC} _{X ^{\ast}}
\leq 
C \LB |u | ^m _X \LC 1 + |u |^{\alpha m m' } _X  \RC  \RB ^{\frac{\alpha +1}{ m' \alpha +1} }
\leq C ( |u| ^{m (\alpha +1 )}_X +1 ), 
\end{align}
since $\phi $ satisfies \eqref{A4} and \eqref{A8}.
Hence  $\Phi $ fulfills  \eqref{A3} and \eqref{A4} with $\bar{m} := m (\alpha +1 ) > p $
(note that the H\"{o}lder conjugate of $\bar{m} $ coincides with $\DS \bar{m}' = m' (\alpha +1) /(  m' \alpha +1 )$).
Then we can carry out the same argument given above 
with $\phi$ replaced by $\Phi$.
That is to say,
for every
 $ \mu \in (0,1 ) $ and $f \in L^{p ' } (0, T ; V ^{\ast })$,
time-periodic problem
\begin{equation*}
  \text{\rm (AP)}^{\star}_{\mu}
   \begin{cases}
       ~~ d_V \psi(u'_{\mu}) + \LC 1 + \mu \phi^{\alpha} (u_{\mu}) \RC 
            \partial_V \phi(u_{\mu}) \ni f,
	      	  ~~ & ~~~~t \in (0,T)~~~\text{in} \ V^{\ast},
\\
       ~~ u_{\mu} (0) = u_{\mu}(T), 
              ~~ & ~~
   \end{cases}
\end{equation*}
possesses  at least one solution satisfying
\begin{equation}
\begin{split}
\label{Regu_Lem3}
&u  _{\mu} \in W ^{1,p} (0,T ;V) \cap L^{\bar{m} } (0,T ;X ), \\
& d_V \psi (u   '   _{\mu}  ) , ~(1 + \mu \phi ^{\alpha } (u  _{\mu} )) \eta _{\mu} \in  L^{p'} (0,T ;V ^{\ast } ), 
\end{split}
\end{equation}
where $\eta _ {\mu } \in \partial _V \varphi  (u _{\mu  })$.

  We shall establish a priori estimates of $u_{\mu}$ independent of $\mu$  
    by repeating the same calculation in Step 3.
  Let  $c_4 , C_4 >0 $  denote general constants independent of $ \mu \in (0,1)$.
    Multiplying (AP)$^{\star  }_{\mu }$ by $u ' _{\mu }$, using the chain rule
$\DS  \Phi ' (u(t)) = \LA (1 + \mu \varphi ^{\alpha } ( u _{\mu} (t))  ) \eta _{\mu } , u' _{\mu}(t) \RA _V $,
and integrating over $[0,T ]$, we have  by \eqref{A1}, \eqref{A2} and \eqref{A6}  (see \eqref{est:below:psi:v})
\begin{equation}
\int_{0}^{T} | u' _{\mu }(t)| ^p _V dt
+
\int_{0}^{T} | d_V \psi ( u' _{\mu }(t) )| ^{p'} _{V^{\ast}} dt
+
\int_{0}^{T}  \psi ( u' _{\mu }(t) ) dt
 \leq C _4.
\label{St4-01} 
\end{equation}
By the equation of (AP)$^{\star } _{\mu }$,
we obtain 
\begin{equation}
| \eta _{\mu }| _{L ^{p '} (0,T  ; V^{\ast })} 
  + |\mu \phi ^{\alpha } (u_{\mu} ) \eta _{\mu }| _{L ^{p '} (0,T  ; V^{\ast })} 
   \leq 
      |d_V \psi (u ' _{\mu} )| _{L ^{p '} (0,T  ; V^{\ast })}  
       + |f| _{L ^{p '} (0,T  ; V^{\ast })} 
         \leq C_4 .
\label{St4-02} 
\end{equation}
From \eqref{A8}, \eqref{AW2} and the canonical embedding $|\eta _\mu | _{V^{\ast}} \geq c _4 |\eta _\mu | _{X^{\ast}}  $, 
we can derive 
\begin{equation}
\int_{0}^{T} \phi ^{p' / m'} (u _{\mu } (t) ) dt
\leq C_4 .
\label{St4-03} 
\end{equation}
By Proposition \ref{Pro2.3}, $\phi (u _{\mu} (\cdot ))$ is absolutely continuous on $[0,T]$
and hence
there is $t_0 = t^{\mu}_0 \in [0,T ] $
at which $\phi (u _{\mu} (\cdot ))$ attains its minimum.
Immediately,  \eqref{St4-03} implies $\phi  ( u _{\mu} (t _0 ) ) \leq C_4 $.
Hence testing (AP)$^{\star  }_{\mu }$ by $u ' _{\mu }$ again and 
integrating over $[t_0 ,t  ]$ with $t \in [t_0 , t_0 +T ]$, we obtain
\begin{equation}
\sup _{0 \leq t \leq T} \phi (u _{\mu} (t)) 
\leq
\sup _{0 \leq t \leq T} \Phi (u _{\mu} (t)) \leq C_4.
\label{St4-05} 
\end{equation}
Combining \eqref{St4-05} with \eqref{A3} and \eqref{A4},
we can show that
\begin{equation}
c_4 \sup _{0 \leq t \leq T} |u _{\mu } (t)  | _{V}
\leq 
\sup _{0 \leq t \leq T} |u _{\mu } (t)  | _{X}
+
\sup _{0 \leq t \leq T} |\eta  _{\mu } (t)  | _{X ^{\ast }}
\leq 
C_4 .
\label{St4-06} 
\end{equation}

 Note that \eqref{St4-01}, \eqref{St4-02} and \eqref{St4-06}
enable us to follows the same convergence argument as that developed in Step 3.
Indeed, \eqref{St4-02} and 
\eqref{St4-05} yield
\begin{equation}
|\mu \phi ^{\alpha  } (u_{\mu} ) \eta _{\mu} | _{L^{p '} (0,T ; V^{\ast })}
\leq 
\mu C_4 \to 0 ~~~\text{ as } \mu \to 0.
\label{St4-07} 
\end{equation}
Applying Theorem 3 of Simon \cite{Simon} and standard argument,
we can extract a subsequence of $\{ u _{\mu} \} _{\mu \in \N }$ (we skip relabeling)
such that 
\begin{align*}
u_ {\mu} ~\to ~  u~~~&~~\text{ strongly in } C([0,T] ; V ) ,\\
~~~~~&~~\text{ weakly in } W^{1,p}(0,T ; V ) ,\\
~~~~~&~~\ast\text{-weakly in } L^{\infty }(0,T ; X ) ,\\
d _V \psi (u '_ {\mu}) ~\rightharpoonup  ~\xi ~~~&~~\text{ weakly in } L^{p'}(0,T ; V ^{\ast} ) ,\\
\eta _ {\mu}  ~\rightharpoonup  ~\eta ~~~&~~\text{ weakly in } L^{p'}(0,T ; V ^{\ast} ), 
\end{align*}
and $\xi + \eta = f $ in $L^{p'} (0,T ; V^{\ast})$.
Using \eqref{St4-07}, we have 
\begin{align*}
\limsup _{\mu \to 0} \int_{0}^{T} \LA \eta _{\mu} (t) , u_{\mu} (t) \RA _V dt 
&=- \int_{0}^{T} \LA \xi (t) , u  (t) \RA _V dt   + \int_{0}^{T} \LA f (t) , u  (t) \RA _V dt \\
&= \int_{0}^{T} \LA \eta  (t) , u  (t) \RA _V dt  ,
\end{align*}
which  together with Proposition \ref{Pro2.2} implies $\eta \in \partial _V \phi (u) $. 
Furthermore, since  
\begin{equation*}
  \begin{split}
    \int_{0}^{T} \LA d _V \psi (u '_ {\mu} (t))  , u '_{\mu} (t) \RA _V dt 
     & = \Phi(u_\mu(0)) - \Phi(u_\mu(T)) +   \int_{0}^{T} \LA f(t) , u'_\mu(t) \RA _V dt 
 \\
     &  \to \phi(u(0)) - \phi(u(T)) +   \int_{0}^{T} \LA f(t) , u'(t) \RA _V dt 
     	 \quad \text{as} \ \mu \to 0,
  \end{split}
\end{equation*} 
we obtain 
\begin{align*}
\limsup _{\mu \to 0}
\int_{0}^{T} \LA d _V \psi (u '_ {\mu} (t))  , u '_{\mu} (t) \RA _V dt 
&=
 - \int_{0}^{T} \LA \eta (t) , u' (t)  \RA _V dt
  + \int_{0}^{T} \LA f(t) , u '(t) \RA _V dt \\
&= 
  \int_{0}^{T} \LA \xi (t) , u '(t) \RA _V dt,
\end{align*}
which  together with Proposition \ref{Pro2.2} leads to $\xi = d _V \psi (u ' ) $. 
Thus it is shown that $u$ gives a solution of (AP) satisfying \eqref{Regu_T2} .
\qed 


\section{Structural stability} 
In this section, we  show that the method 
developed in the previous section is
 applicable to  the study for the structural stability for solution to (AP). 
More precisely, we here consider 
some perturbation to the functionals $\phi $ and $\psi $ 
in the following sense
(see Definition 3.17 and Proposition 3.19 in Attouch \cite{Attouch}).
\begin{definition}
\label{Mosco} 
Let $Z$ be a reflexive Banach space and 
 let $\varphi$ and $ \varphi_n  \ (n \in \N) $
be proper lower semi-continuous convex functions 
from $Z$ to $( - \infty ,  + \infty ] $.
Then  it is said that $\varphi_n$ converges to $\varphi$ 
	in the sense of Mosco, if the following two conditions (a) and (b) hold.   
\begin{itemize}
\item[(a)] (Liminf condition) If $u _ n \rightharpoonup u $ weakly in $Z$,
then $\liminf _{n \to \infty} \varphi _n (u _ n ) \geq \varphi ( u )$ holds.

\item[(b)] (Existence of recovery sequence)
For every $u \in D(\varphi )$, there exists a sequence $\{ u _n \} _{ n \in \N }$
such that $u _ n \to u $ strongly in $Z$ and $\varphi _ n (u_ n ) \to \varphi (u) $.
\end{itemize}
\end{definition}
We here present the following fact
 (see 
Theorem 3.66 and Proposition 3.59  in \cite{Attouch}),
 which gives a generalization of Proposition \ref{Pro2.2}.
\begin{proposition}
\label{Prop4} 
Let  
$\{ \varphi _n \} _{n \in \N}$
be a sequence of proper l.s.c.\ convex functions on a reflexive Banach space $Z$
which converges to a proper l.s.c.\ convex  function 
 $\varphi : Z \to ( -\infty , + \infty ]$ on $Z$  in the sense of Mosco.
Assume that $ [ u _ n , v _ n ] \in \partial  _Z  \varphi _ n $
satisfies $ u_ n \rightharpoonup   u$ weakly in $Z$,  $ v_ n \rightharpoonup   v$ weakly in $Z^{\ast }$, 
and 
\begin{equation*}
\limsup _{n\to \infty } \LA  v _ n , u _n   \RA _Z \leq \LA v  , u  \RA _Z .
\end{equation*}
Then $[u ,v ] \in \partial _Z \varphi $ and 
$\LA  v _ n , u _n   \RA _Z \to \LA v  , u  \RA _Z $ as $n \to \infty $.
\end{proposition}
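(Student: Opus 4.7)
The strategy is to exploit the two halves of Mosco convergence in turn: the recovery sequences will supply admissible test functions for the subdifferential inequalities $v_n \in \partial_Z \varphi_n(u_n)$, while the liminf condition, combined with the standing upper bound on $\LA v_n, u_n \RA_Z$, will pin down the limiting energy. I would split the argument into two almost parallel steps, one for the subdifferential inclusion and one for the energy convergence.

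To prove $v \in \partial_Z \varphi(u)$, fix an arbitrary $w \in D(\varphi)$ and take a recovery sequence $w_n \to w$ strongly in $Z$ with $\varphi_n(w_n) \to \varphi(w)$ provided by Mosco condition (b). Testing $v_n \in \partial_Z \varphi_n(u_n)$ against $w_n$ and rearranging yields
\[
 \varphi_n(u_n) - \LA v_n, u_n \RA_Z \leq \varphi_n(w_n) - \LA v_n, w_n \RA_Z,
\]
whose right-hand side converges to $\varphi(w) - \LA v, w \RA_Z$ by the weak--strong duality pairing. Taking $\liminf$ on the left, condition (a) gives $\liminf \varphi_n(u_n) \geq \varphi(u)$ and the hypothesis gives $-\limsup \LA v_n, u_n \RA_Z \geq -\LA v, u \RA_Z$, so that
\[
 \varphi(u) - \LA v, u \RA_Z \leq \varphi(w) - \LA v, w \RA_Z,
\]
which is exactly $\varphi(w) \geq \varphi(u) + \LA v, w - u \RA_Z$. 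The inequality is trivial for $w \notin D(\varphi)$, and taking $\limsup$ of the original bound also forces $\varphi(u) < +\infty$, so $u \in D(\varphi)$ and thus $v \in \partial_Z \varphi(u)$.

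To upgrade the one-sided estimate on $\LA v_n, u_n \RA_Z$ to a true limit, I would apply the same recovery construction at the point $u$ itself: choose $u_n^r \to u$ strongly with $\varphi_n(u_n^r) \to \varphi(u)$, and test $v_n \in \partial_Z \varphi_n(u_n)$ against $u_n^r$ to obtain
\[
 \LA v_n, u_n^r - u_n \RA_Z \leq \varphi_n(u_n^r) - \varphi_n(u_n).
\]
Since $\LA v_n, u_n^r \RA_Z \to \LA v, u \RA_Z$ and $\varphi_n(u_n^r) \to \varphi(u)$ are honest limits, taking $\limsup$ of both sides cleanly produces
\[
 \LA v, u \RA_Z - \liminf_{n \to \infty} \LA v_n, u_n \RA_Z \leq \varphi(u) - \liminf_{n \to \infty} \varphi_n(u_n) \leq 0,
\]
the last inequality being precisely Mosco liminf. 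Combined with the standing hypothesis $\limsup \LA v_n, u_n \RA_Z \leq \LA v, u \RA_Z$, this forces $\LA v_n, u_n \RA_Z \to \LA v, u \RA_Z$.

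The only real subtlety is the bookkeeping of $\liminf$ versus $\limsup$ when splitting sums such as $\LA v_n, w_n - u_n \RA_Z = \LA v_n, w_n \RA_Z - \LA v_n, u_n \RA_Z$ or the analogous decomposition of energies; in each such splitting one summand has a genuine limit (by weak--strong duality or by the recovery property), which collapses the $\liminf$/$\limsup$ algebra onto the single remaining term and avoids any loss. Beyond this, nothing is needed apart from the definitions of Mosco convergence and of the subdifferential, which is why the statement agrees with the version given in Attouch's monograph.
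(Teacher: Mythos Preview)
Your proof is correct; the paper does not actually prove this proposition but simply cites Theorem~3.66 and Proposition~3.59 of Attouch's monograph, so there is no in-paper argument to compare against. The argument you give---using recovery sequences as test functions in the subdifferential inequality and the liminf condition to control $\varphi_n(u_n)$---is precisely the standard one underlying those cited results.
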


To state our result, we introduce the following growth conditions 
	on $\phi_n $ and $\psi_n $, which is similar to (A.1):

\begin{itemize}
\item[(A.2)] 
$\{ \phi _n \} _{n \in \N }$ and $\{ \psi _n \} _{n \in \N }$
are sequences of proper lower semi-continuous convex functionals 
and G\^{a}teaux differentiable convex  functionals over $V$,  respectively  
such that
$ \phi_n $ and $ \psi_n $ converge to $\phi$ and $\psi$ in the sense of Mosco on $V$, 
respectively.
Furthermore there exist some constants $C >0 $ independent of the index $n$ satisfying
\begin{align}
& | u | ^p _V \leq C ( \psi _ n (u) + 1) ~~~\forall u \in V,
	\label{A11}  \\
& | d _V \psi _ n (u)  | ^{p'} _{V ^{\ast }} \leq C( |u |^p _V + 1 ) ~~~\forall u \in V,
	\label{A12}  \\
& | u | ^m _X \leq  C ( \phi _ n (u) + 1)  ~~~\forall u  \in D(\phi _ n ),	
	\label{A13}  \\
& | \eta   | ^{m'} _{X ^{\ast }} \leq C ( |u |^m _X + 1) ~~~\forall \LB u  ,\eta \RB \in \partial _X (\phi _ n)  _X,
	\label{A14} 
\end{align}
 where $p, m \in (1, \infty)$
 and  $(\phi_n)_X$ stands for the restriction of $\phi_n$ onto $X$.
\end{itemize}

   Then our result is stated as follows.
\begin{theorem}
\label{Th4} 
 Assume {\rm (A.0), (A.1)} and {\rm (A.2)} and let 
    $u_n $ be solutions of 
 \begin{equation*}
\text{\rm (AP)}_n 
   \begin{cases}
      ~d \psi_n (u'_n(t)) + \partial \phi_n (u_n(t) ) \ni f_n(t),
			~~& t \in (0,T ) ~~\text{ in } V ^{*}, 
\\
      ~u _ n (0) = u _ n (T), 
            ~~& 
\end{cases}
\end{equation*}
where $\{ f_n \}_{n \in \N }$ and $f$ satisfy either of 
\begin{itemize}
\item[i) ]   $ f_n \to f $ strongly in $L^{p'}(0,T; V^{\ast})$,

\item[ii) ]  $f_n \rightharpoonup f $ weakly in  $W^{1,p'}(0,T; V^{\ast})$
               and $ f(0) = f(T)$, $f_n (0) = f_n (T)$ hold for any $n$.
\end{itemize}
   Then there exist a subsequence $\{ u_{j} \}_{j \in \N } := \{ u_{ n_j} \}_{j \in \N } $
     and its limit $u $ such that
\begin{align*}
   u_ j ~\to ~  u~~~&~~\text{ strongly in } C([0,T]; V ), 
\\
               ~~~~~&~~\text{ weakly in } W^{1,p}(0,T; V ), 
\\
               ~~~~~&~~\ast\text{-weakly in } L^{\infty }(0,T; X ), 
\\
   d_V \psi_j (u_j) ~\rightharpoonup  ~d_V \psi (u)
                 ~~~&~~\text{ weakly in } L^{p'}(0,T; V^{\ast} ), 
\\
   \eta_j  ~\rightharpoonup  ~\eta 
                 ~~~&~~\text{ weakly in } L^{p'}(0,T; V^{\ast} ), 
\end{align*}
and $u $ is a solution to  {\rm (AP)}. 
  Here $\eta $ and  $\eta_j= \eta_{n_j}$ denote 
    the sections of $\partial \phi (u)$ and $\partial \phi_{n_j} (u_{n_j})$ satisfying the equation of 
      {\rm (AP)} and {\rm (AP)}$_{n_j}$, respectively.
 
\end{theorem}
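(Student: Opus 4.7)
The plan is to mirror Step 3 (or Step 4 when $m\le p$) of the proof of Theorem \ref{Th2}, with one structural change: the closedness argument based on Proposition \ref{Pro2.2} must be replaced by Proposition \ref{Prop4}, which accommodates Mosco-varying functionals. Because the growth bounds (A.11)--(A.14) are uniform in $n$, every a priori estimate from Steps 3 and 4 reproduces with $n$-independent constants. Testing (AP)$_n$ by $u_n'$, using Proposition \ref{Pro2.3} and the periodicity of $u_n$, eliminates the $\partial\phi_n$-term and leaves
\[
\int_0^T \LA d_V\psi_n(u_n'(t)), u_n'(t) \RA_V\, dt = \int_0^T \LA f_n(t), u_n'(t) \RA_V\, dt;
\]
in case (ii) I would rewrite the right-hand side as $-\int_0^T \LA f_n'(t), u_n(t) \RA_V\, dt$ via integration by parts using both periodicities. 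Following \eqref{St4-01}--\eqref{St4-06}, this yields $n$-independent bounds on $|u_n|_{W^{1,p}(0,T;V)}$, $|u_n|_{L^\infty(0,T;X)}$, $|d_V\psi_n(u_n')|_{L^{p'}(0,T;V^*)}$ and $|\eta_n|_{L^{p'}(0,T;V^*)}$.

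Next, Theorem 3 of Simon \cite{Simon} together with the compactness of $X\hookrightarrow V$ extracts a subsequence converging as in the statement, with weak limits $d_V\psi_n(u_n')\rightharpoonup\xi$ and $\eta_n\rightharpoonup\eta$ in $L^{p'}(0,T;V^*)$; passing to the limit in (AP)$_n$ gives $\xi+\eta=f$ in $L^{p'}(0,T;V^*)$ (in case (ii) the needed convergence of the right-hand side is secured by the same integration-by-parts trick). Testing (AP)$_n$ by $u_n$ and using the strong convergence of $u_n$ in $C([0,T];V)$ then yields
\[
\limsup_{n\to\infty}\int_0^T \LA \eta_n(t), u_n(t) \RA_V\, dt = \int_0^T \LA \eta(t), u(t) \RA_V\, dt,
\]
together with the analogous $\limsup$-identity for the pair $(d_V\psi_n(u_n'), u_n')$, obtained by combining the energy identity for $u_n'$ with the now-established limit equation $\xi+\eta=f$ and the chain rule for $\phi(u(\cdot))$.

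The main obstacle, where the argument departs from Step 3, is the final identification: applying Proposition \ref{Prop4} to conclude $\eta(t)\in\partial_V\phi(u(t))$ and $\xi(t)=d_V\psi(u'(t))$ for a.e.\ $t$. Proposition \ref{Prop4} is phrased in a single reflexive Banach space, whereas our inclusions live in Bochner spaces, so I plan to lift the situation by introducing the integral functionals $\Phi_n(v):=\int_0^T\phi_n(v(t))\, dt$ and $\Psi_n(v):=\int_0^T\psi_n(v(t))\, dt$ on $L^p(0,T;V)$, and by verifying that the pointwise Mosco convergences $\phi_n\to\phi$ and $\psi_n\to\psi$ on $V$ lift to Mosco convergences $\Phi_n\to\Phi$ and $\Psi_n\to\Psi$ on $L^p(0,T;V)$. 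The liminf inequality follows from Fatou's lemma along an a.e.-convergent subsequence (extracted via the compact embedding $X\hookrightarrow V$), while the recovery sequence is constructed pointwise and legitimized by dominated convergence, the required integrable majorant being furnished by the uniform growth bounds (A.11)--(A.14). Once this lifting is available, Proposition \ref{Prop4} applied in $L^p(0,T;V)$ gives $\eta\in\partial\Phi(u)$ and $\xi\in\partial\Psi(u')$, which by the standard pointwise characterization of subdifferentials of integral functionals are equivalent to $\eta(t)\in\partial_V\phi(u(t))$ and $\xi(t)=d_V\psi(u'(t))$ for a.e.\ $t$; hence $u$ is a solution of (AP).
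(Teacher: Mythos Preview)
Your proposal is correct and follows exactly the paper's route: the paper likewise invokes the Step~4 estimates \eqref{St4-01}--\eqref{St4-06} verbatim (noting first that (A.2) forces uniform versions of \eqref{A5}--\eqref{AW2}), extracts the same convergences via Simon's theorem, derives the two $\limsup$ inequalities \eqref{S4-01}, handles the term $\int_0^T\langle f_n,u_n'\rangle_V\,dt$ by strong convergence in case~(i) and by integration by parts against the periodic $u_n$ in case~(ii), and then closes with Proposition~\ref{Prop4}. The paper does not spell out the Bochner-space lifting at all; your added discussion is welcome, but your proposed verification of the liminf half is not quite right: for an \emph{arbitrary} weakly convergent sequence $v_n\rightharpoonup v$ in $L^p(0,T;V)$ there is no $X$-bound available, so the compact embedding $X\hookrightarrow V$ cannot produce an a.e.\ convergent subsequence, and Fatou alone does not give $\liminf_n\int_0^T\phi_n(v_n)\ge\int_0^T\phi(v)$. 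The lifting is nevertheless standard (e.g.\ via the resolvent/graph-convergence characterization of Mosco convergence in Attouch~\cite{Attouch}, which is computed pointwise for integral functionals), so you may simply cite this rather than attempt a direct argument.
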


\begin{proof}

   It is clear that assumptions \eqref{A11} and \eqref{A12} yield \eqref{A5}--\eqref{AW1}
     with $\psi $ replaced by $\psi_n $ and  $C$ independent of $n$,  
         and \eqref{A7} with $\phi$ replaced by $\phi_n$ is a direct consequence of 
        	\eqref{A13} and \eqref{A14}.
    Moreover, \eqref{A8}--\eqref{AW2} also hold 
     with $\phi $ replaced by $\phi_n$ by virtue of (b) of Definition \ref{Mosco}.
Indeed, let $v \in D(\phi )$, then there exists 
  a recovery sequence $\{ v _ n \} _{n \in \N }$
    which satisfies $ v_n \to v$  in $V$ and $\phi_n(v_n) \to \phi(v)$.
      Remark that \eqref{A13} implies uniform boundedness of $| v_n |_{X} $.
        By the definition of subdifferential,
\begin{align*}
  \phi_n (u) & \leq \phi_n (v_n ) + \LA \eta, u - v_n  \RA_X
   \leq \phi_n (v_n ) +  | \eta  |_{X^{\ast} }  ( |u|_X  +|v_n |_X  ) 
\\ 
             & \leq C +  |\eta|_{X^{\ast} }  ( |u|_X  + C  ),
\end{align*}
 where $[ u, \eta ] \in \partial_X (\phi_n)_X$ and $C$ is independent of $n$.
   Hence $\phi _n$ fulfills \eqref{A8} by \eqref{A14} and \eqref{AW2} by  \eqref{A13}.

   Then repeating exactly the same manipulations as those for \eqref{St4-01}--\eqref{St4-06}
     in Step 4 of the previous section, we can establish the following estimates for solutions
        $u_n$ of (AP)$_n$ :
\begin{align*}
|u ' _n | _{L^p (0, T ; V )} +
	|d _V \psi _n  ( u ' _n ) | _{L^p (0, T ; V )} + 
		 \int_{0}^{T} \psi _n ( u'_n (t)) dt \leq C _5 ,\\
\sup _{0\leq t \leq T } |u  _n  (t)| _{X} +
	\sup _{0\leq t \leq T }  |\eta  _n  (t) | _{X ^{\ast}} + 
		 \sup_{0\leq t \leq T }  \phi_n ( u_n (t))  \leq C _5 , 
\end{align*}
where $C _ 5 >0 $ is a general constant independent of $n$.
Therefore we can reprise the same discussion of convergence 
 as that in Step 4 of the previous section and obtain
\begin{equation}
\begin{split}
\limsup _{n\to \infty } \int_{0}^{T} \LA  \eta  _n ,  u  _n (t ) \RA _V dt \leq 
   \int_{0}^{T} \LA \eta (t) ,  u (t ) \RA _V dt , \\
\limsup _{n\to \infty } \int_{0}^{T} \LA d _V \psi _n ( u ' _n (t ) ) ,  u ' _n (t ) \RA _V \leq 
   \int_{0}^{T}  \LA  \xi (t) ,  u '  (t ) \RA _V dt, 
\end{split}
\label{S4-01} 
\end{equation}
where $\eta $ and $\xi$ are limits of (a suitable subsequence of) 
$\{ \eta  _n \} _{n \in \N}$
and
$\{  d_V \psi_n ( u ' _n (t ) ) \} _{n \in \N}$, respectively.
  Here assumption i) on $f_n$ is used  to assure   
\begin{equation*}
\int_{0}^{T} \LA f _n (t) , u ' _ n (t) \RA _V  dt 
\to 
\int_{0}^{T} \LA f  (t) , u '  (t) \RA _V dt 
\end{equation*}
as $n \to \infty $ and obtain \eqref{S4-01}.
Assumption ii) also leads to 
\begin{align*}
\int_{0}^{T} \LA f _n (t) , u ' _ n (t) \RA _V  dt 
& =  \LA f _n (T) , u  _ n (T) \RA _V -  \LA f _n (0) , u  _ n (0) \RA _V
 - \int_{0}^{T} \LA f ' _n (t) , u  _ n (t) \RA _V  dt \\
& \to - \int_{0}^{T} \LA f '  (t) , u   (t) \RA _V  dt 
 =   \int_{0}^{T} \LA f  (t) , u ' (t) \RA _V  dt .
\end{align*}
Therefore, it follows our result with the aid of Proposition \ref{Prop4}.
\end{proof}


\section{Application} 

   Let $\Omega \subset \R^d $ be a bounded domain with sufficiently smooth boundary $\partial \Omega $.
     As in Akagi--Stefanelli \cite{AS_Cau}, we consider the following  doubly nonlinear parabolic equation: 
\begin{equation*}
   \text{\rm (DNP)} 
    \begin{cases}
     ~\alpha ( u'(x,t)) - \Delta^a_m u(x,t) = f (x,t)~~ 
        & (x,t) \in \Omega \times (0,T),
\\
     ~u  (x ,t) = 0, ~~
        & (x,t) \in \partial \Omega \times (0,T),
    \end{cases}
\end{equation*}
where 
\begin{equation*}
   \Delta^a_m u (x) := \nabla \cdot \LC a (x) |\nabla u (x)|^{m-2} \nabla u(x) \RC ,~~~
     x \in \Omega , ~ m \in ( 1,  \infty ) ,~~a : \Omega \to \R .
\end{equation*}
   In \cite{AS_Cau}, it is assumed that  $a $,  $\alpha $,  and the exponent $m$ satisfy 
\begin{itemize}
\item[(a.1)] There exist some constants $a _ 1 , a _ 2 > 0$ such that 
               $ a_1 \leq a (x) \leq a_2 $ for a.e. $x \in \Omega $. 

\item[(a.2)] $\alpha : \R \to \R $ is a single-valued maximal monotone function 
               with $D(\alpha ) = \R$.
                 Moreover, there exist $p \in (1 , \infty )$ and constants $c , C > 0 $ such that
\begin{equation*}
   c ~\! |s |^p - \frac{1}{c} \leq A (s) ,~~ |\alpha (s) |^{p'} \leq C (|s|^p  +1 )
      ~~~\forall s \in \R, 
\end{equation*}
where $\DS A (s) := \int_{0}^{s} \alpha (\sigma ) d \sigma $ (primitive function of $\alpha $). 

\item[(a.3)] $\DS p < m^{\ast } := \frac{Nm}{(N - m )_+ } $.

\end{itemize}

To reduce (DNP) to (AP), set 
\begin{equation*}
V := L^ p (\Omega ) , ~~X := W ^{1, m } _0 (\Omega )
\end{equation*}
and  define functionals $\psi$ and $\phi$  on $V$ by
\begin{equation}\label{def:psi:phi}
\begin{split}
\psi (u) & := \int_{\Omega } A(u(x) ) dx, 
\\[2mm]
\phi (u) & : = \begin{cases}
~~\DS  \frac{1}{m}  \int_{\Omega } a(x) |\nabla u(x)|^m dx ~~~
& \text{ if } u \in W^{1,m}_0(\Omega), 
\\
~~ + \infty  ~~~
&\text{ otherwise. }
\end{cases}
\end{split}
\end{equation}
Note that $V $, $X$, and $V^{\ast } = L ^{p'} (\Omega )$ are uniformly convex
and the embedding $X \hookrightarrow V $ is compact by (a.3).
From the growth  condition assumed in (a.2), we have $D( \psi ) = L^{p} (\Omega )$.
It is easy to obtain the convexity and differentiability of  $\psi $ on $V$ 
and see that its derivative coincides with $d_V \psi (u) = \alpha (u)$.
Since $a$ is  assumed to be a non-degenerate coefficient on $\Omega $,
we can show that $\phi _X $ is differentiable on $X $ and $\partial _X \phi _X (u) =  - \Delta ^a _ m u $
by the standard variational argument
(immediately $\partial _V \phi (u) = -\Delta ^a _ m u$ by $\partial _V \phi \subset \partial _X \phi _X$). 
Moreover, assumptions (a.1) and (a.2) 
lead to growth conditions \eqref{A1}--\eqref{A4}.

Therefore, (A.0) and (A.1) are verified for (DNP), 
so Theorem \ref{Th2} assures that the following result holds.
\begin{theorem}
	\label{Ex01} 
	Assume {\rm (a.1)--(a.3)}.
	Then for every $f \in L^{p'}(0,T;L^{p'} (\Omega ))$,
	{\rm (DNP)} possesses at least one time-periodic solution satisfying
	\begin{align*}
	&  u \in W^{1,p}(0,T;L^p(\Omega )) \cap  C([0,T]; W^{1,m}_0(\Omega)) , 
	\\
	&  \alpha (u'), \Delta^a_m u \in L^{p'} (0,T;L^{p'} (\Omega )) .
	\end{align*}
\end{theorem}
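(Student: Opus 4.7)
The plan is to cast (DNP) as an instance of the abstract equation (AP) with the spaces $V := L^p(\Omega)$ and $X := W^{1,m}_0(\Omega)$ and the functionals $\psi, \phi$ given by \eqref{def:psi:phi}, then invoke Theorem \ref{Th2}. The work splits into three tasks: verify the structural hypothesis (A.0) on the ambient spaces, verify the growth assumptions (A.1) on $\psi$ and $\phi$ together with the identifications $d_V \psi = \alpha(\cdot)$ and $\partial_X \phi_X = -\Delta^a_m$, and finally upgrade the $L^\infty(0,T;X)$ regularity of the resulting solution to $C([0,T];X)$ so as to match the statement.

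For (A.0), since $1 < p, m < \infty$, both $V = L^p(\Omega)$ and its dual $L^{p'}(\Omega)$ are uniformly convex, while $X = W^{1,m}_0(\Omega)$ and its dual $W^{-1,m'}(\Omega)$ are reflexive. The embedding $X \hookrightarrow V$ is dense (as $C_c^\infty(\Omega)$ is dense in both) and compact: this is the Rellich--Kondrachov theorem under (a.3), which ensures $p < m^\ast$. Dualising yields the compact dense embedding $V^\ast \hookrightarrow X^\ast$.

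For (A.1), the two-sided bound in (a.2) gives, after integration, $\psi(u) \geq c\, |u|_{L^p}^p - c^{-1} |\Omega|$, hence \eqref{A1}, and the pointwise bound $|\alpha(s)|^{p'} \leq C(|s|^p+1)$ at once yields \eqref{A2}; a dominated-convergence argument identifies $d_V \psi(u) = \alpha(u)$. For $\phi$, non-degeneracy (a.1) and the Poincar\'e inequality on $W^{1,m}_0(\Omega)$ make $m\phi$ equivalent to $|\cdot|_X^m$, so \eqref{A3} is immediate; proper lower semicontinuity on $V$ follows from Fatou together with the weak l.s.c.\ of the $W^{1,m}_0$-seminorm. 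A standard variational computation identifies $\partial_X \phi_X(u) = -\Delta^a_m u \in X^\ast$, and the duality estimate $|-\Delta^a_m u|_{X^\ast} \leq a_2\, |u|_X^{m-1}$ gives \eqref{A4}.

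With (A.0) and (A.1) verified, Theorem \ref{Th2} produces a periodic $u$ with $u \in W^{1,p}(0,T; L^p(\Omega)) \cap L^\infty(0,T; W^{1,m}_0(\Omega))$, $\alpha(u') = d_V \psi(u') \in L^{p'}(0,T; L^{p'}(\Omega))$, and $-\Delta^a_m u = \eta \in L^{p'}(0,T; L^{p'}(\Omega))$; moreover $\phi(u(\cdot))$ is absolutely continuous on $[0,T]$. The only remaining step is to replace $L^\infty(0,T;X)$ by $C([0,T];X)$. From $u \in W^{1,p}(0,T;V) \cap L^\infty(0,T;X)$ one first obtains weak continuity of $u : [0,T] \to X$ by a standard subsequence argument (pointwise weak limits in $X$ must coincide with the $V$-limit by uniqueness of limits). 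Continuity of $\phi(u(\cdot))$ then forces continuity of the equivalent norm $(m\,\phi(\cdot))^{1/m}$ on $X$; combined with weak continuity and the uniform convexity of $X$, the Radon--Riesz (Kadec--Klee) property upgrades this to strong continuity in $X$, giving $u \in C([0,T];X)$. The sole non-routine point is this final regularity upgrade; everything else is a direct verification of the abstract hypotheses of Theorem \ref{Th2}.
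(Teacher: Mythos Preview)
Your proof is correct and follows the same line as the paper: verify (A.0) and (A.1) for $V=L^p(\Omega)$, $X=W^{1,m}_0(\Omega)$ and the functionals \eqref{def:psi:phi}, apply Theorem~\ref{Th2}, then upgrade $L^\infty(0,T;X)$ to $C([0,T];X)$ by combining weak continuity in $X$ with continuity of $t\mapsto \phi(u(t))$ and a Kadec--Klee argument. One point needs tightening in the last step: uniform convexity is not invariant under equivalent renorming, so invoking ``the uniform convexity of $X$'' together with continuity of the \emph{equivalent} norm $(m\phi(\cdot))^{1/m}$ is not enough as stated---you must check that $X$ is uniformly convex in precisely this norm. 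This holds because $(m\phi(u))^{1/m}=|\nabla u|_{L^m_a}$ and the weighted space $L^m_a(\Omega)$ is uniformly convex by (a.1); this is exactly the observation the paper makes explicit before applying the same argument.
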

	\begin{proof}
		The assertions above follow from the direct application of 
		Theorem \ref{Th2} except $u \in C([0,T]; W^{1,m}_0(\Omega))$. 
		 To verify this, we first note that (a.1) assures that 
		  $L^m_a(\Omega)$ with norm $ |w|_{L^m_a} := (\int_\Omega a ~\! |w|^m ~\!  dx)^{1/m}$ 
		    is uniformly convex and $m ~\! \phi(u(t)) = |\nabla u(t)|_{L^{m}_a}^m$ 
		     is (absolutely) continuous on $[0,T]$. Therefore we easily derive  
		       $ \nabla u \in C([0,T];(L^m_a(\Omega) )^d)$, which together with (a.1) assures 
		         $u \in C([0,T];W^{1,m}_0(\Omega))$.
	\end{proof}

We next consider the structural stability  of (DNP). 
For this purpose, we introduce the following conditions:
\begin{itemize}
	\item[(a.4)]  $\{ a_n \}_{n \in \N }$ is a sequence of functions $a _n : \Omega \to \R $
	such that $a_n (x) \to a (x) $ as $n \to \infty $ for a.e. $x \in \Omega $.
    In addition, (a.1)  with $a$ replaced by $a_n$ holds for all $n$.

	\item[(a.5)] $\{ \alpha_n \}_{n \in \N }$ is a sequence of functions $\alpha_n : \R \to \R$
	such that the sequence of primitive functions $\DS A _n (s) := \int_{0}^{s} \alpha _n (\sigma ) d \sigma  $
	converges to $\DS A (s) := \int_{0}^{s} \alpha  (\sigma ) d \sigma $ in the following sense:
	\begin{itemize}
	\item [i)] If $s _n \to s $ in $\R $, then $\liminf _{n\to \infty } A _n( s_ n ) \geq A (s) $.

	\item [ii)] For every $s \in \R$, there exists a sequence $\{ s _ n \} _{n\in \R }$
	such that $A _n(s_n ) \to A(s)$ as $n\to \infty $.	
	\end{itemize}
	In addition, (a.2) holds with functions $\alpha $ and $A$ replaced by $\alpha _n $ and $A _n$, respectively
	by the same $c, C >0 $ (independent of $n$).

\end{itemize}

 Define $\psi_n$ and $\phi_n$ by \eqref{def:psi:phi} with $A(s)$ and $a(x)$ replaced by 
	           $A_n(s)$ and $a_n(x)$, respectively. 
Then according to \cite{AY} and standard facts in \cite{Attouch},
(a.4) and (a.5) assure Mosco convergence of $\{ \psi _n \} _{n \in \N}$ and $\{ \phi _n \} _{n \in \N}$
to $\psi $ and $\phi $ on $V$, respectively.
Hence it follows from Theorem \ref{Th4} that
\begin{theorem}
\label{Ex02} 
   Assume {\rm (a.1)--(a.5)} and 
     $\{ f_n \}_{n \in \N }$ and $f$ satisfy either of 
\begin{itemize}
\item[i)]    $ f_n \to f $ strongly in $L^{p'} (0,T;L^{p'}(\Omega ))$,

\item[ii)]   $f_n \rightharpoonup f $
               weakly in  $W^{1,p'}(0,T;L^{p'}(\Omega ))$
                and $ f(0) = f(T)$, $f_n(0) = f_n (T)$ for any $n$.
\end{itemize}
Let $u_n$ be a solution of 
\begin{equation*}
\text{\rm (DNP)}_n 
   \begin{cases}
     ~\alpha_n( u ' _n(x,t)) - \Delta ^{a_n}_m u_n (x,t) = f_n (x,t)
         ~~& (x, t) \in \Omega \times (0,T), 
\\
     ~u  _n(x , t ) = 0 , 
         ~~& (x, t) \in \partial \Omega \times (0,T ), 
\\
     ~u_n(x, 0) = u_n(x,T), 
         ~~&  x \in  \Omega,
   \end{cases}
\end{equation*}
  whose existence is assured by Theorem \ref{Ex01}.
   Then there exist a subsequence $\{u _j \} _{j\in \N } := \{u _{n_j} \} _{j\in \N }$
and its limit $u$ such that
\begin{align*}
   u_j ~\to ~  u~~~&~~\text{ strongly in } C([0,T];L^{p}(\Omega ) ),
\\
              ~~~~~&~~\text{ weakly in } W^{1,p}(0,T;L^{p}(\Omega ) ),
\\
              ~~~~~&~~\ast\text{-weakly in } L^{\infty }(0,T;  W^{1,m}_0 (\Omega ) ),
\\
   \alpha (u'_j) ~\rightharpoonup  ~\alpha (u')
                ~~~&~~\text{ weakly in } L^{p'}(0,T; L^{p'}(\Omega) ),
\\
   \Delta^{a_j}_m u_j  ~\rightharpoonup  ~ \Delta^{a}_m u  
               ~~~&~~\text{ weakly in } L^{p'}(0,T; L^{p'}(\Omega)), 
\end{align*}
and $u $ is a solution to  {\rm (DNP)}. 
\end{theorem}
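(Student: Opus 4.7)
The plan is to derive Theorem \ref{Ex02} as a direct application of the abstract structural stability result, Theorem \ref{Th4}, with the same choice of ambient spaces $V=L^p(\Omega)$ and $X=W^{1,m}_0(\Omega)$ as in Theorem \ref{Ex01}. Define the perturbed functionals $\psi_n,\phi_n:V\to(-\infty,+\infty]$ by the formulas in \eqref{def:psi:phi} with $A,a$ replaced by $A_n,a_n$, so that $d_V\psi_n(v)=\alpha_n(v)$ and $\partial_V\phi_n(u)=-\Delta^{a_n}_m u$. Then (DNP)$_n$ becomes the abstract equation (AP)$_n$ in Theorem \ref{Th4}, and the conclusion of Theorem \ref{Th4} translates literally into the convergences asserted in Theorem \ref{Ex02}, so it suffices to verify hypothesis (A.2) for $\{\psi_n\},\{\phi_n\}$ uniformly in $n$.

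First I would check the uniform growth conditions \eqref{A11}--\eqref{A14}. Since (a.4) provides the same two-sided bound $a_1\le a_n(x)\le a_2$ as (a.1), and (a.5) gives (a.2) for $\alpha_n,A_n$ with the same constants $c,C$, the argument used in Theorem \ref{Ex01} carries over verbatim: the lower bound on $A_n$ yields \eqref{A11}; the growth bound on $\alpha_n$ yields \eqref{A12}; the uniform coercivity $a_n\ge a_1$ combined with the Poincar\'e inequality in $W^{1,m}_0(\Omega)$ yields \eqref{A13}; and the standard duality estimate for $\Delta^{a_n}_m$ under $a_n\le a_2$ yields \eqref{A14}. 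Crucially, all constants can be chosen independent of $n$.

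The second task is the Mosco convergence on $V=L^p(\Omega)$. For $\phi_n$ the recovery sequence at any $u\in W^{1,m}_0(\Omega)$ can be taken to be the constant sequence $u_n\equiv u$: then $a_n|\nabla u|^m\le a_2|\nabla u|^m\in L^1(\Omega)$ and $a_n\to a$ a.e. by (a.4), so dominated convergence gives $\phi_n(u)\to\phi(u)$. For the liminf inequality, if $u_n\rightharpoonup u$ in $L^p(\Omega)$ with $\liminf\phi_n(u_n)<\infty$, then along a subsequence $|\nabla u_n|_{L^m}$ is bounded (using $a_n\ge a_1$), so $u_n\rightharpoonup u$ in $W^{1,m}_0(\Omega)$ and weak lower semicontinuity of the weighted $m$-energy with varying coefficient $a_n\to a$ yields the claim. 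For $\psi_n$, by (a.5) the primitives $A_n$ converge to $A$ in the $\Gamma$-sense on $\mathbb{R}$ with uniform $p$-growth from above and below, and this lifts to Mosco convergence of the integral functionals on $L^p(\Omega)$; this is precisely the content of the results of Ambrosio--Yang and Attouch cited in the text (\cite{AY},\cite{Attouch}). Hence (A.2) holds.

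With (A.0), (A.1) already verified in the proof of Theorem \ref{Ex01} and (A.2) just established, Theorem \ref{Th4} applies to the sequence $\{u_n\}$ of solutions of (DNP)$_n$ under either assumption i) or ii) on $\{f_n\}$, producing a subsequence converging in the topologies listed, with $\alpha_n(u'_{n_j})\rightharpoonup d_V\psi(u')=\alpha(u')$ and $-\Delta^{a_{n_j}}_m u_{n_j}\rightharpoonup\eta\in\partial_V\phi(u)=\{-\Delta^a_m u\}$ in $L^{p'}(0,T;L^{p'}(\Omega))$, and with $u$ a periodic solution of (DNP). The main technical obstacle in a fully self-contained argument would be the verification of Mosco convergence of $\psi_n$ on $L^p(\Omega)$ from the one-dimensional convergence of $A_n$ in (a.5), since one must promote scalar lower semicontinuity and the existence of a recovery sequence to the integral setting while controlling the $p$-growth; but this passage is precisely what \cite{AY} and \cite{Attouch} provide, so I would simply cite those references and conclude.
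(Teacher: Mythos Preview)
Your proposal is correct and follows the same route as the paper: define $\psi_n,\phi_n$ via \eqref{def:psi:phi} with $A_n,a_n$ in place of $A,a$, verify (A.2) (the uniform growth bounds \eqref{A11}--\eqref{A14} from (a.4)--(a.5), and Mosco convergence on $V$, for which the paper simply cites \cite{AY} and \cite{Attouch}), and then invoke Theorem~\ref{Th4}. One minor slip: the reference \cite{AY} is Aizicovici--Yan, not ``Ambrosio--Yang''.
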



\appendix
\section{Appendix} 

\begin{lemma}
Let $E$ be a real reflexive Banach space and 
$\varphi :E \to [0, \infty ]$ be a proper lower semi-continuous convex functional.
Define 
\begin{equation*}
\Phi  (u) :=  \varphi (u) +\frac{\mu }{1 + \alpha } \varphi ^{1+\alpha } (u)
~~~~~u \in E  
\end{equation*}
for some  $\alpha , \mu > 0 $. Then the followings hold:
\begin{equation*}
 D(\Phi )  = D(\varphi ) ,
~~
 D(\partial_E \Phi ) =  D(\partial_E \varphi ) ,\\
~~
  \partial_E \Phi(u) = ( 1 + \mu \varphi(u)^{\alpha }) \partial_E \varphi(u) 
     \quad \forall u \in D(\partial_E \varphi) . 
\end{equation*}
\end{lemma}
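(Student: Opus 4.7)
The plan is to recognize $\Phi$ as a scalar composition $\Phi = g\circ\varphi$, where $g:\R\to [0,+\infty]$ is defined by $g(s) := s + \frac{\mu}{1+\alpha}s^{1+\alpha}$ for $s\geq 0$ and $g(s) := +\infty$ for $s<0$. This $g$ is proper, convex, l.s.c., nondecreasing, and of class $C^1$ on $[0,\infty)$ with derivative $g'(s) = 1 + \mu s^{\alpha} \geq 1$. Since $\varphi\geq 0$, the composition $\Phi(u) = g(\varphi(u))$ is well defined, and as $g$ is finite on $[0,\infty)$ we obtain $D(\Phi) = D(\varphi)$ directly. Convexity and lower semicontinuity of $\Phi$ follow from the standard composition rule (a convex functional composed with a convex nondecreasing continuous function). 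It thus remains to establish the subdifferential identity
\begin{equation*}
  \partial_E \Phi(u) = g'(\varphi(u))\,\partial_E \varphi(u) = (1+\mu\,\varphi(u)^{\alpha})\,\partial_E \varphi(u);
\end{equation*}
the equality $D(\partial_E \Phi) = D(\partial_E \varphi)$ then follows automatically because $g'(\varphi(u))\geq 1$ never vanishes.

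For the inclusion $g'(\varphi(u))\,\partial_E \varphi(u) \subset \partial_E \Phi(u)$, I would fix $\eta\in\partial_E \varphi(u)$ and chain two inequalities: convexity of $g$ at the point $\varphi(u)$ gives $g(\varphi(v)) - g(\varphi(u)) \geq g'(\varphi(u))\,(\varphi(v)-\varphi(u))$ for every $v\in D(\varphi)$, while the subdifferential inequality for $\eta$ gives $\varphi(v) - \varphi(u) \geq \langle\eta, v-u\rangle_E$. Since the multiplier $g'(\varphi(u)) = 1 + \mu\varphi(u)^{\alpha} \geq 1 > 0$, the two combine into $\Phi(v) - \Phi(u) \geq \langle g'(\varphi(u))\,\eta,\, v-u\rangle_E$, which is exactly the subdifferential inequality for $\Phi$.

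The reverse inclusion $\partial_E \Phi(u)\subset g'(\varphi(u))\,\partial_E \varphi(u)$ is the crux. Given $\xi\in\partial_E \Phi(u)$ and an arbitrary $v\in D(\varphi)$, I introduce the scaled point $v_{\lambda} := u + \lambda(v-u)$ with $\lambda\in(0,1]$. Convexity of $\varphi$ yields $\varphi(v_{\lambda}) \leq \varphi(u) + \lambda(\varphi(v)-\varphi(u))$, and since $g$ is nondecreasing,
\begin{equation*}
  g\bigl(\varphi(u) + \lambda(\varphi(v)-\varphi(u))\bigr) - g(\varphi(u))
    \geq \Phi(v_{\lambda}) - \Phi(u) \geq \lambda\,\langle\xi,\, v-u\rangle_E.
\end{equation*}
Dividing by $\lambda$ and sending $\lambda\to 0^+$, the left-hand side converges to the directional derivative of $g$ at $\varphi(u)$ in direction $\varphi(v)-\varphi(u)$; since the argument $\varphi(u) + \lambda(\varphi(v)-\varphi(u))$ stays in $[0,\infty)$ (both $\varphi(u)$ and $\varphi(v)$ being nonnegative) and $g$ is $C^1$ there, this limit equals $g'(\varphi(u))\,(\varphi(v)-\varphi(u))$. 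Dividing by $g'(\varphi(u))\geq 1$ gives $\varphi(v) - \varphi(u) \geq \langle\xi/g'(\varphi(u)),\, v-u\rangle_E$, hence $\xi/g'(\varphi(u))\in \partial_E \varphi(u)$, i.e., $\xi\in g'(\varphi(u))\,\partial_E \varphi(u)$.

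The main obstacle I anticipate is the endpoint behaviour at $\varphi(u)=0$, where $g$ viewed as a function on all of $\R$ is not differentiable (its subdifferential at $0$ equals $(-\infty,1]$). However, since $\varphi\geq 0$ the scaling above keeps the arguments of $g$ in $[0,\infty)$, where $g$ is $C^1$ up to the boundary with $g'(0)=1$; consequently the limit argument of the preceding paragraph goes through uniformly in both cases $\varphi(u)>0$ and $\varphi(u)=0$, the formula reducing to $\partial_E\Phi(u)=\partial_E\varphi(u)$ in the latter case. Combining the two inclusions yields both the subdifferential identity and the equality of subdifferential domains, completing the proof.
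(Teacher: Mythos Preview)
Your argument is correct and is a genuinely different---and substantially more elementary---route than the one in the paper. The paper establishes the easy inclusion $(1+\mu\varphi^\alpha)\partial_E\varphi \subset \partial_E\Phi$ much as you do (via Young's inequality rather than the tangent inequality for $g$), but for the reverse inclusion it argues indirectly: since $\partial_E\Phi$ is maximal monotone, it suffices to show that $A:=(1+\mu\varphi^\alpha)\partial_E\varphi$ is itself maximal monotone, and this is done by solving $F_E(u-w)+Au \ni w^\ast$ for arbitrary $(w,w^\ast)$. That in turn requires a one-parameter family of auxiliary resolvent equations $F_E(u_\lambda-w)+(1+\lambda)\partial_E\varphi(u_\lambda)\ni w^\ast$, a priori estimates, continuity and convexity properties of the solution sets $\mathcal{X}_\lambda$, and finally Kakutani's fixed-point theorem applied to the set-valued map $\lambda\mapsto\{\mu\varphi(u_\lambda)^\alpha: u_\lambda\in\mathcal{X}_\lambda\}$. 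Your approach bypasses all of this by exploiting the specific structure $\Phi=g\circ\varphi$ with $g$ smooth, convex, and increasing on $[0,\infty)$: the scaling $v_\lambda=u+\lambda(v-u)$ combined with $\varphi(v_\lambda)\leq\varphi(u)+\lambda(\varphi(v)-\varphi(u))$ and the monotonicity of $g$ reduces the subdifferential inequality for $\Phi$ to a one-variable differentiation of $g$, yielding $\xi/g'(\varphi(u))\in\partial_E\varphi(u)$ directly. The paper's approach would generalize to situations where the outer function is not $C^1$ (or where one needs the maximality of $A$ for independent reasons), but for the lemma as stated your direct chain-rule argument is both shorter and more transparent.
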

\begin{proof}
First, 
$ D(\Phi ) = D(\varphi ) $  is trivial since $\varphi \geq 0 $.
Moreover, we can easily see that 
$ (1 + \mu \varphi ^{\alpha }) \partial_E \varphi \subset \partial_{E} \Phi $.
Indeed, for every $\LB u _1 , v _1 \RB  \in \partial_E \varphi $
and $u _2 \in D(\Phi ) = D(\varphi )$, we have
\begin{align*}
     &\LA ( 1 + \mu \varphi ^{\alpha } (u_1 )) v _1 , u _1 - u _  2 \RA_E 
\\
      & \geq ( 1 + \mu \varphi ^{\alpha } (u_1 ))   ( \varphi (u _1 ) - \varphi (u_2 )) 
\\
       & \geq  \varphi (u _1 ) - \varphi (u_2 )  
   	      + \mu \varphi ^{\alpha +1 } (u_1 )  
   	        -  \mu \LC \frac{\alpha }{\alpha +1 } 
   	              \varphi ^{\alpha \cdot \frac{\alpha +1}{\alpha } }(u_1 ) 
				     + \frac{1 }{\alpha +1 } \varphi ^{\alpha +1} (u_2 )  \RC 
\\
         & \geq \Phi (u _1 ) - \Phi (u_2 ).
\end{align*}
Hence we only have to check the maximality of 
  $ A := ( 1 + \mu \varphi ^{\alpha }) \partial_E \varphi $.

To this end, we rely on the following criterion 
(see Theorem 10.6 of \cite{Simons}).
\begin{lemma}\label{lemma:maximal} 
	  Let $E$ be a reflexive Banach space and 
	  $A :E \to 2 ^{E^{\ast}} $ be a monotone operator. 
Then $A$ is maximal monotone if and only if 
\begin{equation}\label{criterion:maximal}
   \begin{split}
 & G(A) + G(- F_E) = E \times E^\ast,
 \\ 
  &\text{i.e.,} \ 
    \forall (w,w^\ast) \in E \times E^\ast, \ \exists (u ,u^\ast) \in G(A) 
     \ \text{ such that} \quad 
         w^\ast \in u^\ast + F_E(u-w).
    \end{split}  
\end{equation}  	
\end{lemma} 
\noindent According to this lemma, 
  it suffices to show that for any $(w,w^\ast) \in E \times E^\ast$ there exists $u \in D(A)$ such that 
\begin{equation}\label{eq:A:maximal}
     F_E(u - w)  + ( 1 + \mu \varphi ^{\alpha }(u)) \partial_E \varphi (u) \ni w^\ast. 
\end{equation}
 To see this, for any $(w,w^\ast) \in E \times E^\ast$ and $ \lambda \geq 0$, 
  we consider the following auxiliary equation:
\begin{equation}
F_{E} (u_{\lambda } - w) + ( 1+ \lambda ) \partial_{E} \varphi (u_{\lambda } ) \ni w^\ast. 
\label{A2-01} 
\end{equation}
  Since $ (1 + \lambda) \partial_E \varphi$ is maximal monotone in $E \times E^\ast$, 
    Lemma \ref{lemma:maximal} assures that \eqref{A2-01} admits at least one solution. 
     In our setting, however, the uniqueness of solution is not ensured. 
Define  the set of all solutions of \eqref{A2-01} by $\mathcal{X}_\lambda \subset E$
       and  a set-valued mapping $\gamma : [0,\infty) \to 2^{[0,\infty)}$ by
\begin{equation*}
 \gamma(\lambda) :=  \{ ~\! \mu ~\! \varphi^\alpha ( u _\lambda) ~\! ; ~\! 
                         u_\lambda \in \mathcal{X}_\lambda ~\! \}.
\end{equation*}
   We are going to show that $\gamma$ has a fixed point $\lambda_0$. 
    It is easy to see that $u=u_{\lambda_0}$ gives a solution of 
      \eqref{eq:A:maximal}, which completes the proof.

   In order to show the existence of the fixed point of $\gamma$, we rely on Kakutani's fixed-point theorem.
    To do this, it suffices to verify the following facts:
\begin{itemize}
\item[{\#}1] There exists $L > 0 $ such that $\gamma ([0,L]) \subset [0,L ]$.

\item[{\#}2] The graph of $\gamma$ is closed, i.e., if $\lambda_n \to \lambda$ 
and $ y_n \to y$ with $y_n \in \gamma(\lambda_n)$, then $y \in \gamma(\lambda)$ holds true. 

\item [{\#}3] $\gamma(\lambda) $ is convex and non-empty for each $\lambda \in [0,\infty)$.

%
%
\end{itemize}

We first establish 
 a priori estimates for 
     $u_{\lambda }$ and $\varphi (u_{\lambda })$.
      Let $g_{\lambda }$ and $h_\lambda$ be the sections of $ \partial_E \varphi(u_\lambda)$ 
        and $F_E(u_\lambda - w)$ satisfying \eqref{A2-01}, namely,
\begin{equation}\label{def:g:lambda}
    g_{\lambda } := \frac{w^\ast - h_\lambda}{1 + \lambda } 
                     \in \partial_{E} \varphi (u_{\lambda }). 
\end{equation}
 By the definition of subdifferential,  
 \begin{equation}
  \varphi (v) -  \varphi (u_{\lambda })
  \geq \LA g_{\lambda } , v - u_{\lambda } \RA_{E} 
 = \LA \frac{w^\ast - h_\lambda}{1 + \lambda }  , v- u_{\lambda } \RA_{E} 
 \label{est:varphiu:lambda}
\end{equation}
holds for any fixed $v \in D(\varphi )$.
Hence
 $u_{\lambda } $ satisfies 
\begin{equation}
 2(1 + \lambda  ) \varphi (v) + |w-v|_E^2 +  2 ~\! |w^\ast|_{E^\ast} ( |v|_E + |u_\lambda|_E ) 
      \geq  2(1 + \lambda  ) \varphi (u_{\lambda })
              + |u_{\lambda } - w|^2_{E}, 
 \label{A2-02} 
\end{equation}
which implies the uniform boundedness of $|u_{\lambda }|_E$ and $\varphi(u_{\lambda })$
with respect to $\lambda $.

Let
  $u_{\nu } \in \mathcal{X} _{\nu}$, i.e., 
  $u_{\nu } \in D(\partial_{E} \varphi)$ 
be  a solution of \eqref{A2-01} 
	with $\lambda$ replaced by $\nu$.
  Substituting $u_{\nu} $ with $v $ in \eqref{est:varphiu:lambda}, we get 
\begin{equation}
  (1+ \lambda ) \varphi (u_{\nu }) - (1+ \lambda ) \varphi (u_{\lambda })
      \geq \LA w^\ast - h_\lambda , u_{\nu } - u_{\lambda } \RA_{E} .
  \label{A2-W01} 
\end{equation}
  Reversing the roles of $u_\lambda$ and $u_\nu$, we have 
\begin{equation}
  (1+ \nu ) \varphi (u_{\lambda  }) - (1+ \nu ) \varphi (u_{\nu })
      \geq \LA w^\ast - h_\nu , u_{\lambda  } - u_{\nu } \RA_{E} .
  \label{A2-W02} 
\end{equation}
By adding \eqref{A2-W01} and \eqref{A2-W02}, we obtain 
\begin{equation}
  ( \nu - \lambda ) \varphi (u_{\lambda  }) - ( \nu  -\lambda ) \varphi (u_{\nu })
     \geq \LA h_\lambda - h_\nu , u_{\lambda  } - u_{\nu } \RA_{E} .
  \label{A2-03} 
\end{equation}
Monotonicity of $F_{E}$ and \eqref{A2-03} yield
\begin{equation}\label{bound:above:phi}
   \varphi (u_0) \geq \varphi (u_{\lambda}) \geq \varphi (u_{\nu}) \quad 
\quad \quad 
                \text{ if } \ \nu \geq \lambda \geq 0, 
\end{equation}
where $u_0$, $u_\lambda$, and $u_\nu$ are arbitrary elements of $\mathcal{X}_0$,
 $\mathcal{X}_\lambda$,   and $\mathcal{X}_\nu$, respectively. 
Then \eqref{A2-03} also gives us 
\begin{equation}
  \LA h_\lambda - h_\nu , u_{\lambda  } - u_{\nu }  \RA_{E}
     \leq \varphi (u_0) |  \lambda -\nu |    
  \quad          \text{ for any } \lambda, \nu \geq 0.
\label{A2-04} 
\end{equation}
Multiplying the difference of two equations \eqref{A2-01} 
	with $\lambda=\lambda$ and $\lambda =\nu$ by $u_{\lambda } - u_{\nu }$ 
	    and using the monotonicity of $F_E$,
         we have 
\begin{equation*}
  \LA g_{\lambda } - g_{\nu } , u_{\lambda  } - u_{\nu }  \RA_{E}
   + \LA  \lambda g_{\lambda } - \nu g_{\nu } , u_{\lambda  } - u_{\nu }  \RA_{E}
     \leq 0
       ~~~\text{ for any } \lambda, \nu \geq 0.
\end{equation*}
 Here, if $\lambda \geq \nu $, 
\begin{align*}
 - \LA  \lambda g_{\lambda } - \nu g_{\nu }, u_{\lambda} - u_{\nu }  \RA_{E} 
    = &  - (\lambda - \nu ) 
           \LA   g_{\lambda }, u_{\lambda  } - u_{\nu }  \RA_{E}
            - \nu \LA   g_{\lambda} - g_{\nu}, u_{\lambda} - u_{\nu}  \RA_{E} 
  \\
      \leq &  (\lambda - \nu ) (\varphi (u_{\nu}  ) - \varphi (u_{\lambda})),
\end{align*}
and if $\nu \geq \lambda  $,
\begin{align*}
   - \LA  \lambda g_{\lambda } - \nu g_{\nu }, u_{\lambda} - u_{\nu}  \RA_{E} 
     = & - (\lambda - \nu ) 
           \LA   g_{\nu}, u_{\lambda} - u_{\nu}  \RA_{E}
              - \lambda  
                 \LA  g_{\lambda} -  g_{\nu}, u_{\lambda} - u_{\nu}  \RA_{E} 
  \\
           \leq &  (\nu - \lambda ) 
                     (\varphi (u_{\lambda}  ) - \varphi (u_{\nu})) .
\end{align*}
Hence 
\begin{equation}
   \LA g_{\lambda} - g_{\nu}, u_{\lambda} - u_{\nu}  \RA_{E}
       \leq  \varphi (u_0 ) |\lambda - \nu | 
          ~~~\text{ for any } \lambda, \nu \geq 0.
\label{A2-05} 
\end{equation}

Now from \eqref{A2-02}, we can assure {\#}1 with 
\begin{equation*}
  L := \mu \LC \varphi (v) + \frac{1}{2} |w - v|_{E}^2 
                     + | w^\ast|_{E^\ast} ( |v|_E + |w|_E + |w^\ast|_{E^\ast})  
                     \RC ^\alpha , 
\end{equation*}
where $v$ is an arbitrary element in $D(\varphi )$.

 In order to see {\#}2, let $\{ \lambda_n \}_{n \in \N}$ be a sequence of non-negative numbers
  which converges to $ \lambda \geq 0$ and let $ y_n \to y$ 
    with $y_n = \mu \varphi(u_{\lambda_n})^\alpha \in \gamma(\lambda_n)$.
     By  \eqref{def:g:lambda} and \eqref{A2-02}, we find that $|u_{\lambda_n}|_E, 
      |h_{\lambda_n}|_{E^\ast}, |g_{\lambda_n}|_{E^\ast} $ are all uniformly bounded. 
  Hence there exists a subsequence of $ \{\lambda_n\}$, denoted again by the same symbol, 
    such that 
    \begin{center}
  \begin{tabular}{ll}
   $\DS u_{\lambda_n}  \rightharpoonup u_\lambda$ \hspace{5mm} & weakly in $E$, \\[2mm]
   $\DS h_{\lambda_n}  \rightharpoonup h_\lambda $ \hspace{5mm} & weakly in  $E^\ast$, \\
   $\DS g_{\lambda_n}    \rightharpoonup g_\lambda = \frac{w^\ast - h_\lambda}{1 + \lambda}$ 
                                          \hspace{5mm}  & weakly in  $E^\ast$. 
  \end{tabular}
	\end{center}
%
  %
Thanks to Proposition \ref{Pro2.2}, 
   \eqref{A2-04} and  \eqref{A2-05}  with $\lambda = \lambda_n$ and $\nu = \lambda_m$ 
      imply that
\begin{equation}\label{converge:h:g}
   \begin{split}
     & h_{\lambda} \in F_{E} (u_\lambda), \quad 
         g_\lambda = \frac{w^\ast - h_\lambda}{1 + \lambda } 
           \in \partial_E \varphi (u_\lambda)
\\
    &  \LA h_{\lambda_n}, u_{\lambda_n} \RA_E  \quad \to 
         \quad  \LA h_\lambda, u_\lambda \RA_E 
           \quad \text{as} \ n \to \infty.
   \end{split}
\end{equation}
  Hence $u_\lambda \in \mathcal{X}_\lambda$, i.e., $u_\lambda$ is a solution 
   of \eqref{A2-01}.  Furthermore from \eqref{A2-W01} and \eqref{A2-W02} 
     with $\lambda=\lambda, \ \nu = \lambda_n $ together with \eqref{converge:h:g}, 
      we can derive 
\begin{align*}
  |\varphi (u_{\lambda_n}) -  \varphi (u_{\lambda}) |
     & \leq  | \LA h_{\lambda} - w^\ast, u_{\lambda_n} - u_{\lambda}  \RA_{E} | 
           +  | \LA h_{\lambda_n} - w^\ast, u_{\lambda} - u_{\lambda_n}  \RA_{E} |    
\\[2mm]
        & \to  0 ~~\text{ as } \  n\to \infty,
\end{align*}
   which implies that $ y = \mu \varphi(u_\lambda)^\alpha \in \gamma(\lambda)$. 
     Thus {\#}2 is verified. 
     
 To show {\#}3, we prepare the following lemma.
\begin{lemma}\label{lemma3}
 For each $\lambda \in [0,\infty)$, $\mathcal{X}_\lambda$ forms a non-empty closed convex 
   subset of $E$. Moreover for any $u_\lambda,  \bar{u}_\lambda 
     \in \mathcal{X}_\lambda$, it holds that 
\begin{equation}\label{identity:varphi:convex} 
  \varphi (\tau ~\! u_\lambda + (1-\tau )~\! \bar{u}_\lambda) 
     = \tau ~\! \varphi (u_\lambda) + (1-\tau ) ~\! \varphi(\bar{u}_\lambda) 
        \quad \forall \tau  \in (0,1).	
\end{equation}	
\end{lemma}
\begin{proof}[\sc Proof of Lemma \ref{lemma3}]
We first note that $\mathcal{X}_\lambda$ is not 
empty, since \eqref{A2-01} admits at least one solution. 
 We put
\begin{equation*}
   \varphi _1 (u) := \frac{1}{2} |u - w|^2_E + ( 1 + \lambda ) \varphi (u) 
      \quad u \in D(\varphi _1) := D ( \varphi).
\end{equation*}
Obviously, $\varphi _1 $ is a lower semi-continuous convex function and 
\begin{equation*}
   \partial_E \varphi _1 (u) = F_E(u - w) + ( 1 + \lambda )  \partial_E \varphi (u) 
        \quad u \in D(\partial _E \varphi _1 ) =D(\partial _E \varphi ).
\end{equation*}
Then 
$ u_\lambda,  \bar{u}_\lambda \in \mathcal{X}_\lambda$
is equivalent to 
$\LB u_{\lambda } , w^\ast \RB , \LB  \bar{u}_\lambda, w^\ast \RB \in  \partial_E \varphi _1 $, that is,
\begin{equation*}
 \varphi _1 (v) - \varphi _1 (u_\lambda) \geq \LA w^\ast, v - u_\lambda \RA_E ,~~~
 \varphi _1 (v) - \varphi _1 (\bar{u}_\lambda) \geq \LA w^\ast, v - \bar{u}_\lambda \RA_E 
\end{equation*}
for any $ v \in D(\varphi _1 )$.
These and  convexity of $\varphi _1$ yield
   \begin{align*}
   \varphi _1 (v) - \varphi _1 ( \tau ~\! u_\lambda + (1-\tau ) ~\! \bar{u}_\lambda) 
     & \geq \varphi _1 (v) - \tau \varphi _1  (u_\lambda) - (1-\tau ) \varphi _1 (\bar{u}_\lambda)  \\
      &   \geq \LA w^\ast, v - ( \tau ~\! u_\lambda + (1-\tau ) ~\! \bar{u}_\lambda) \RA_E ,
\end{align*}
   which means that $ \tau ~\! u_\lambda + (1-\tau ) ~\! \bar{u}_\lambda \in \mathcal{X}_\lambda$
for every $ u_\lambda,  \bar{u}_\lambda \in \mathcal{X}_\lambda$, 
    i.e., $\mathcal{X}_\lambda$ is convex. 
 
    Let $u^n_\lambda \in \mathcal{X}_\lambda$ converge to $u_\lambda$ strongly in $E$, 
      then letting $n \to \infty$ in 
\begin{equation*}
   \varphi _1 (v) - \varphi _1 (u^n_\lambda) \geq \LA w^\ast, v - u^n_\lambda \RA_E 
      \quad \forall v \in D(\varphi _1),
\end{equation*} 
  we have 
\begin{equation*}
  \varphi _1 (v) - \varphi _1 (u_\lambda) \geq \LA w^\ast, v - u_\lambda \RA_E 
  \quad \forall v \in D(\varphi _1 ),
\end{equation*}
  whence follows $u_\lambda \in \mathcal{X}_\lambda$, i.e., $\mathcal{X}_\lambda$ is 
   closed in $E$. 
   
   Let $ u_\lambda, \ \bar{u}_\lambda \in \mathcal{X}_\lambda$ and 
     let $g_\lambda, \ \bar{g}_\lambda$ be the sections of 
       $ \partial_E \varphi( u_\lambda), \ \partial_E \varphi(\bar{u}_\lambda)$ satisfying 
         \eqref{A2-01}.
   Then \eqref{A2-05} with $\nu = \lambda, \ u_\nu = \bar{u}_\lambda, \ 
      g_\nu = \bar{g}_\lambda$ implies 
\begin{equation*}
  \LA g_\lambda - \bar{g}_\lambda, u_\lambda - \bar{u}_\lambda \RA_E = 0, \ \text{i.e.,} \ 
     \LA \bar{g}_\lambda , u_\lambda - \bar{u}_\lambda \RA_E 
           = \LA g_\lambda , u_\lambda - \bar{u}_\lambda \RA_E.
\end{equation*}   
Therefore, 
\begin{align*}
  \tau ~\! (\varphi (u_\lambda) - \varphi (\bar{u}_\lambda)) 
   & = \tau \varphi (u_\lambda) + (1-\tau ) \varphi(\bar{u}_\lambda) - \varphi(\bar{u}_\lambda) 
\\
   & \geq \varphi( \tau u_\lambda + (1-\tau ) \bar{u}_\lambda) - \varphi (\bar{u}_\lambda)
\\
   &   \geq \LA \bar{g}_\lambda, \tau u_\lambda + (1-\tau ) \bar{u}_\lambda - \bar{u}_\lambda \RA_E 
\\
       &  = \tau \LA \bar{g}_\lambda, u_\lambda - \bar{u}_\lambda \RA_E
\\
  & 
	=  \tau \LA g_\lambda, u_\lambda - \bar{u}_\lambda \RA_E
  \\
   &
  \geq  \tau ~\! ( \varphi (u_\lambda) - \varphi(\bar{u}_\lambda)),
\end{align*}
 whence follows \eqref{identity:varphi:convex}.
\end{proof}

 
     We here claim that 
	$\Gamma_\lambda := \{  \varphi (u_\lambda)  ~; ~
                       u_\lambda \in \mathcal{X}_\lambda  \}$
                       is bounded, closed and convex in $\R$.
 The boundedness is obvious from \eqref{bound:above:phi} 
     and the closedness can be derived from the same arguments for the verification of 
       {\#2} above with $\lambda_n \equiv \lambda$.    
  For each $y_1, y_2 \in \Gamma_\lambda$, there exist $u_1, u_2 \in \mathcal{X}_\lambda$ 
    such that $ y_i = \varphi(u_i) \ (i=1,2)$. 
      Then from \eqref{identity:varphi:convex}, we have 
\begin{equation*}
 \tau y_1 + (1-\tau ) y_2 = \tau \varphi (u_1) + (1-\tau ) \varphi (u_2) 
                   = \varphi (\tau u_1 + (1-\tau ) u_2). 
\end{equation*} 
  Hence $\tau y_1 + (1-\tau ) y_2 \in \Gamma_\lambda$ follows
by Lemma \ref{lemma3}.
       Thus $\Gamma_\lambda$ is convex and 
there exist $ - \infty < a_\lambda \leq b_\lambda < + \infty$ 
       such that  
\begin{equation*}
   \Gamma_\lambda = [ a_\lambda, b_\lambda].
\end{equation*}
Immediately, we can conclude 
\begin{equation*}
  \gamma(\lambda) = [ ~\! \mu ~\!  a_\lambda^\alpha, ~\! \mu ~\! b_\lambda^\alpha ~\!], 
\end{equation*}     
  whence follows {\#}3.
\end{proof}


\address{
ABeam Consulting Ltd. \\
Marunouchi Eiraku Bldg.,\\
1-4-1 Marunouchi Chiyoda-ku, Tokyo,\\
Japan, 100-0005
}
{koike.4.19@gmail.com}
\address{
Department of Applied Physics, \\School of Science and Engineering, \\ 
Waseda University, \\
3-4-1, Okubo Shinjuku-ku, Tokyo,\\
Japan, 169-8555
}
{otani@waseda.jp}
\address{
Department of Integrated Science and Technology, \\
Faculty of Science and Technology, \\ 
Oita University,\\
700 Dannoharu, Oita City, Oita Pref., \\
Japan  870-1192
}
{shunuchida@oita-u.ac.jp}

\begin{thebibliography}{99}



	\bibitem{AY}
	S. Aizicovici; Q. Yan, 
	Convergence theorems for abstract doubly nonlinear differential equations, 
	Panamer. Math. J. 7 (1997), 1--17.


	
	\bibitem{AH2}
	S. Aizicovici; V.-M. Hokkanen,  
	Doubly nonlinear periodic problems with unbounded operators, 
	J. Math. Anal. Appl. 292(2) (2004), 540--557.
	


	\bibitem{A-S0}
	G. Akagi; U. Stefanelli, 
	A variational principle for doubly nonlinear evolution, 
	Appl. Math. Lett. 23(9) (2010), 1120--1124. 

	\bibitem{AS_Cau}
	G. Akagi; U. Stefanelli, Weighted energy-dissipation functionals for doubly nonlinear evolution,
	J. Funct. Anal. 260(9) (2011), 2541--2578.

	\bibitem{AS_Per}
	G. Akagi; U. Stefanelli, Periodic solutions for doubly nonlinear evolution equations,
	J. Differential Equations 251(7) (2011), 1790--1812. 

	\bibitem{A-S1}
	G. Akagi; U. Stefanelli, 
	Doubly nonlinear equations as convex minimization,
	SIAM J. Math. Anal. 46(3) (2014), 1922--1945.

	\bibitem{A-S2}
	G. Akagi; U. Stefanelli, 
	A variational principle for gradient flows of nonconvex energies,
	J. Convex Anal. 23(1) (2016), no. 1, 53--75. 

	\bibitem{A-S3}
	G. Akagi; S. Melchionna; U. Stefanelli, 
	Weighted energy-dissipation approach to doubly nonlinear problems on the half line, 
	J. Evol. Equ. 18(1) (2018), 49--74. 


	\bibitem{AL}
	H. M. Alt; S. Luckhaus, 
	 Quasilinear elliptic-parabolic differential equations, 
	Math. Z. 183(3) (1983), 311--341. 


	\bibitem{Arai}
	T. Arai, 
	On the existence of the solution for 
	$\partial \phi (u' (t)) + \partial \psi (u(t)) \ni f (t)$,
	J. Fac. Sci. Univ. Tokyo Sect. IA Math. 26(1) (1979), 75--96.

	\bibitem{Schauder}
	O. Arino; S. Gautier; J. P. Penot,
		A fixed point theorem for sequentially continuous mappings with applications to
        ordinary differential equations,
	Funkcial Ekvac. 27 (1984), 273--279.


	\bibitem{Attouch} 
	H. Attouch, 
	{\it Variational Convergence for Functions and Operators}, 
	Applicable Mathematics Series, 
	Pitman (Advanced Publishing Program), Boston, MA, 1984.

	\bibitem{Bar0}
	V. Barbu, 
	Existence theorems for a class of two point boundary problems, 
	J. Differential Equations 17 (1975), 236--257.

	\bibitem{Bar1} 
	V. Barbu,
	{\it Nonlinear Semigroups and Differential Equations in Banach Spaces},
    Springer, Netherlands, 1976.

	\bibitem{Bar2}
	V. Barbu,
    {\it Nonlinear Differential Equations of Monotone Types in Banach Spaces},
	Springer Monographs in Mathematics,
    Springer, New York, 2010. 



	\bibitem{Bernis}
	F. Bernis,
	Existence results for doubly nonlinear higher order parabolic equations on unbounded domains,
	Math. Ann. 279(3) (1988),  373--394.





	\bibitem{PU}
	H. Br\'{e}zis, 
	Probl\'{e}mes unilat\'{e}raux,
	J. Math. Pures Appl. 51(9) (1972), 1--168.

  
	\bibitem{BRE}
	H. Br\'{e}zis,
    {\it  Op\'{e}rateurs Maximaux Monotones et Semigroupes 
    de Contractions dans un Espace de Hilbert}, 
    North Holland, Amsterdam, 1973. 
    


	\bibitem{BCP}
	H. Br\'{e}zis, M; G. Crandall; A. Pazy,
	Perturbations  of nonlinear maximal monotone sets in Banach space,
	Comm. Pure Appl. Math. 23 (1970), 
	123--144.



	\bibitem{C}
	P. Colli, 
	On some doubly nonlinear evolution equations in Banach spaces, 
	Japan J. Indust. Appl. Math. 9(2) (1992),  181--203.

	\bibitem{CV}
	P. Colli; A. Visintin, 
	On a class of doubly nonlinear evolution equations, 
	Comm. Partial Differential Equations 15(5) (1990),  737--756.

	\bibitem{CO}
	S. Conti; M. Ortiz, 
	Minimum principles for the trajectories of systems governed by rate problems, 
	J. Mech. Phys. Solids 56(5) (2008), 1885--1904. 

	\bibitem{DS}
	E. DiBenedetto; R. E. Showalter,  
	Implicit degenerate evolution equations and applications, 
	SIAM J. Math. Anal. 12(5) (1981), 731--751. 

	\bibitem{FMNP}
	G. Fragnelli; D. Mugnai; P. Nistri; D. Papini,
	Nontrivial, nonnegative periodic solutions of a system of singular-degenerate parabolic equations with nonlocal terms, 
	Commun. Contemp. Math. 17(2) (2015), 1450025, DOI:10.1142/S0219199714500254.

	\bibitem{FNP}
	G. Fragnelli; P. Nistri; D. Papini,
	Non-trivial non-negative periodic solutions of a system of doubly degenerate parabolic equations 
	with nonlocal terms, 		
	Discrete Contin. Dyn. Syst. 31(1) (2011), 35--64. 

	\bibitem{FT}
	T. Fukuda; M. Tsutsumi, 
	The initial-boundary value problem for a nonlinear degenerate parabolic equation,
	Nonlinear Anal. 16(11) (1991), 997--1009.

	\bibitem{G}
	O. Grange; F. Mignot,
	Sur la r\'{e}solution d'une \'{e}quation et d'une in\'{e}quation paraboliques non lin\'{e}aires,
	J. Functional Analysis 11 (1972), 77--92.

	\bibitem{Hokkanen1}
	V.-M. Hokkanen, 
	An implicit nonlinear time dependent equation has a solution, 
	J. Math. Anal. Appl. 161(1) (1991), 117--141.
	


	\bibitem{Hokkanen3}
	V.-M. Hokkanen, 
	Existence of a periodic solution for implicit nonlinear equations, 
	Differential Integral Equations 9(4) (1996), 745--760.

	\bibitem{Ishige}
	K. Ishige, 
	On the existence of solutions of the Cauchy problem for a doubly nonlinear parabolic equation,
	SIAM J. Math. Anal. 27(5) (1996), 1235--1260.

	\bibitem{Ivanov}
	A. V. Ivanov, 
	Existence and uniqueness of a regular solution of the Cauchy-Dirichlet problem 
	for doubly nonlinear parabolic equations, 	
	Z. Anal. Anwendungen 14(4) (1995), 751--777.

    
	\bibitem{KK1}
	N. Kenmochi; M. Kubo,
	Periodic solutions to a class of nonlinear variational inequalities with time-dependent constraints,
	Funkcial. Ekvac. 30(2)-(3) (1987), 333--349.

	\bibitem{KK2}
	N. Kenmochi; M. Kubo,
	Periodic behavior of solutions to parabolic-elliptic free boundary problems,
	J. Math. Soc. Japan 41(4) (1989), 625--640.

	\bibitem{KP}
	N. Kenmochi; I. Paw{\l}ow, 
	A class of nonlinear elliptic-parabolic equations with time-dependent constraints,
	Nonlinear Anal. 10(11) (1986), 1181--1202.
	
 
	\bibitem{MO}
	A. Mielke; M. Ortiz,
	A class of minimum principles for characterizing the trajectories and the relaxation of dissipative systems,
	ESAIM Control Optim. Calc. Var. 14(3) (2008), 494--516.

	\bibitem{MS1}
	A. Mielke; U. Stefanelli, 
	A discrete variational principle for rate-independent evolution, 
	Adv. Calc. Var. 1(4) (2008),  399--431. 

	\bibitem{MS2}
	A. Mielke; U. Stefanelli, 
	Weighted energy-dissipation functionals for gradient flows, 
	ESAIM Control Optim. Calc. Var. 17(1) (2011), 52--85. 
	

	\bibitem{Pru} 
    J. Pr\"{u}{\ss}, A characterization of uniform convexity and applications to accretive operators,
    Hiroshima Math. J. 11(2) (1981), 229-234.    

    \bibitem{Raviart}
	P. A. Raviart, 
	Sur la r\'{e}solution de certaines equations paraboliques non lin\'{e}aires, 
	J. Functional Analysis 5 (1970) 299-328.  

	\bibitem{Saa}
	J. E. Sa\'{a}, 
	Large time behaviour of the doubly nonlinear porous medium equation, 
	J. Math. Anal. Appl. 155(2) (1991), 345--363. 

	\bibitem{Senba}
	T. Senba,
	On some nonlinear evolution equation, 
	Funkcial. Ekvac. 29(3) (1986), 243--257. 
    
    
    \bibitem{Simon}
    J. Simon, Compact sets in the space $L ^p (0,T;B)$, 
	Ann. Mat. Pura Appl. 146(4)  (1987), 65--96.
	
	
	
	\bibitem{Simons}
	  S. Simons, {\it Minimax and Monotonicity},
	  Lecture Notes in Mathematics 1693,
	  Sringer-Verlag, Berlin, 1998. 	   
		

	\bibitem{Stef}
	U. Stefanelli, 
	The Brezis-Ekeland principle for doubly nonlinear equations, 
	SIAM J. Control Optim. 47(3) (2008), 1615--1642.

	\bibitem{SYW}
	J. Sun; J. Yin; Y. Wang,
	Asymptotic bounds of solutions for a periodic doubly degenerate parabolic equation, 
	Nonlinear Anal. 74(6) (2011), 2415--2424.

	\bibitem{Tsutsumi}
	M. Tsutsumi, 
	On solutions of some doubly nonlinear degenerate parabolic equations with absorption,
	J. Math. Anal. Appl. 132(1) (1988), 187--212.

	\bibitem{Ves}
	V. Vespri, 
	On the local behaviour of solutions of a certain class of doubly nonlinear parabolic equations,
	Manuscripta Math. 75(1) (1992), 65--80. 

	\bibitem{WG}
	J. Wang; W. Gao,
	Existence of nontrivial nonnegative periodic solutions
	for a class of doubly degenerate parabolic equation
	with nonlocal terms,
	J. Math. Anal. Appl. 331(1) (2007), 481--498.

	\bibitem{WY}
	Y. Wang; J. Yin,
	Coexistence periodic solutions of a doubly nonlinear parabolic system with Neumann boundary conditions,
	J. Math. Anal. Appl. 396(2) (2012), 704--714.


	\bibitem{WYK}
	Y. Wang; J. Yin; Y. Ke,
	Coexistence solutions for a periodic competition model with singular-degenerate diffusion,
	Proc. Edinb. Math. Soc. 60(4) (2017), 1065--1075.



	









\end{thebibliography}
\end{document}